\documentclass[12pt]{amsart}
\usepackage{amssymb,amsmath}
\usepackage[utf8]{inputenc}
\usepackage{xcolor}
\usepackage[margin=2.6cm]{geometry}
\usepackage{hyperref}
\usepackage[capitalise]{cleveref}

\hypersetup{
	colorlinks=true,
	linkcolor=blue,
	citecolor=blue,
	urlcolor=blue
}

\newtheorem{proposition}{Proposition}[section]
\newtheorem{lemma}[proposition]{Lemma}
\newtheorem{corollary}[proposition]{Corollary}
\newtheorem{theorem}[proposition]{Theorem}

\newtheorem{example}[proposition]{Example}

\DeclareMathOperator{\R}{\mathbb {R}}
\DeclareMathOperator{\depth}{depth}
\DeclareMathOperator{\pd}{pd}
\DeclareMathOperator{\reg}{reg}
\DeclareMathOperator{\Min}{Min}
\DeclareMathOperator{\lk}{lk}

\DeclareMathOperator{\Spec}{Spec}
\newcommand{\cI}{I_c}

\def\alb {\boldsymbol {\alpha}}
\def\m {\mathfrak m}

\begin{document}
	
	\pagenumbering{arabic}
	
\title[Regularity of symbolic powers of complementary  edges ideals]{Regularity of symbolic powers of \\ complementary  edge ideals}

    \author[Truong Thi Hien]{Truong Thi Hien}
	\address{Faculty of Natural Sciences, Hong Duc University, No. 565 Quang Trung, Hac Thanh, Thanh Hoa, Vietnam}
\email{hientruong86@gmail.com}

    \author[Manohar Kumar]{Manohar Kumar}
	\address{Department of Mathematics, Indian Institute of Technology Madras, Chennai 600036, India}
	\email{manhar349@gmail.com} 
	
	\dedicatory{Dedicated to Professor Le Tuan Hoa on the occasion of his 70th birthday}

	\thanks{Manohar Kumar is the corresponding author.}
	
	\subjclass[2020]{Primary 13D02, 13D45; Secondary 05E40, 05E99.}
	\keywords{Complementary edge ideal, symbolic power, Castelnuovo-Mumford regularity, Serre's condition, Cohen-Macaulay ring.}

\begin{abstract}
Let \(G\) be a finite simple graph on \(n\) vertices, and let \(I_c(G)\) be its complementary edge ideal. We determine \(\reg(I_c(G)^{(t)})\) for every \(t\geq1\) in terms of the number \(c(G)\) of nontrivial connected components of \(G\) and the presence of a \(K_2\)-component. Consequently, \(\reg(I_c(G)^{(t)})\leq \reg(I_c(G)^t) \)
for all \(t\geq1\), with equality for every \(t\) if and only if either \(c(G)=1\), or \(c(G)=2\) and \(G\) has a \(K_2\)-component. We also characterize Serre's condition \((S_2)\) for \(R/I_c(G)\), and classify the graphs for which \(R/I_c(G)^{(t)}\) is Cohen-Macaulay for every \(t\geq1\).
\end{abstract}

	\maketitle

\section{Introduction}

Square-free monomial ideals provide a natural link between commutative algebra and combinatorics. A particularly natural class consists of those generated in degree \(n-2\), since they admit a simple description in terms of graphs.  For \(F\subseteq[n]\), write \(x_F=\prod_{i\in F}x_i\) with \(x_\emptyset=1\). If \(G\) is a finite simple graph on \([n]\), its complementary edge ideal is defined by
\[
I_c(G)=\bigl(x_{[n]\setminus e}:e\in E(G)\bigr).
\]
Conversely, every square-free monomial ideal generated in degree \(n-2\) arises in this way. Complementary edge ideals were introduced independently by Hibi, Qureshi, and Saeedi Madani~\cite{HQSM26} and by Ficarra and Moradi~\cite{fm25}.

Symbolic powers of square-free monomial ideals reflect the structure of their minimal primes and have strong connections with combinatorics, convex geometry, and linear optimization; see, for instance, \cite{MontanoNunez21, HT}. While the asymptotic behavior of their Castelnuovo-Mumford regularity is known in considerable generality,
determining the exact regularity of individual symbolic powers is often more difficult.

Ficarra and Moradi~\cite{fm25} described the minimal primes of \(I_c(G)\), characterized several of its homological properties, and determined the regularity of all ordinary powers in terms of the connected components of \(G\). They also asked when \(I_c(G)\) satisfies Serre's condition \((S_2)\) \cite[Question~2.9]{fm25}, and whether
\[
\reg(I_c(G)^t)=\reg(I_c(G)^{(t)})
\qquad\text{for all }t\geq1
\]
\cite[Question~4.11]{fm25}. A related, but stronger, problem concerning the equality \(I_c(G)^t=I_c(G)^{(t)}\) was recently settled, in the equivalent language of \((n-2)\)-uniform clutters, by Roy and Saha~\cite{RS25}. Very recently, Ficarra, Moradi, and Muta studied the symbolic Rees algebra of complementary edge ideals and several homological invariants of their symbolic powers for special classes of graphs \cite{FicarraMoradiMuta}. They also announced related forthcoming work on the regularity and depth of symbolic powers. The results of the present paper were obtained independently.

The present paper gives complete answers to both questions of Ficarra and Moradi. Let \(c(G)\) denote the number of connected components of \(G\) containing at least two vertices. We determine \(\reg I_c(G)^{(t)}\) for every \(t\geq1\); see Theorem \ref{thm:main-symbolic-regularity}. In particular, for \(t\geq2\),
\[
\reg(I_c(G)^{(t)})
=
\begin{cases}
(n-2)t+1,
& \text{if \(c(G)\geq2\) and \(G\) has a \(K_2\)-component},\\
(n-2)t,
& \text{otherwise}.
\end{cases}
\]
Consequently,
\[
\reg(I_c(G)^{(t)}) \leq \reg(I_c(G)^t)
\qquad\text{for all }t\geq1,
\]
and equality holds for every \(t\geq1\) precisely when either \(c(G)=1\), or \(c(G)=2\) and \(G\) has a \(K_2\)-component; see Corollary \ref{cor:symbolic-ordinary-comparison}.

We also characterize Serre's condition and the Cohen-Macaulayness of symbolic powers. We prove that \(R/I_c(G)\) satisfies \((S_2)\) if and only if either \(G\) has no isolated vertices and is complete or has girth at least \(5\), or \(G\cong K_2\sqcup sK_1\) for some \(s\geq1\); see Theorem \ref{thm:S2}. Moreover, the Cohen-Macaulayness of a symbolic power $I_c(G)^{(t)}$ with \(t \geq 3\) already forces every symbolic power to be Cohen-Macaulay. This occurs precisely for complete graphs, stars, disjoint unions of edges, and \(K_2\) together with isolated vertices; see Theorem~\ref{thm:symbolic-CM}.

Our proofs combine symbolic polyhedra, degree complexes, Takayama's formula, and monomial localization. The symbolic polyhedron determines the slope of the regularity function, while degree complexes and localization allow us to recover its exact value. The results on Serre's condition and Cohen-Macaulayness are obtained from links in the associated Stanley-Reisner complex and from the matroid characterization of Cohen-Macaulay symbolic powers. Section~\ref{sec:preliminaries} collects the necessary preliminaries, Section~\ref{sec:symbolic-regularity} determines the regularity of the symbolic powers, and Section~\ref{sec:depth-properties} treats Serre's condition and Cohen-Macaulayness.

\section{Preliminaries}\label{sec:preliminaries}

In this section, we recollect notation, terminology and basic results used in the paper.  Throughout the paper, let $k$ be a field, let $R = k[x_1,\ldots,x_n],\  n\geqslant 1$ be a polynomial ring, and let $\m = (x_1,\ldots,x_n)$ be the maximal homogeneous ideal of $R$.

\subsection{Complementary edge ideals and symbolic powers}

Let \(G\) be a finite simple graph on \([n]=\{1,\ldots,n\}\). For \(F\subseteq[n]\), let \(P_F=(x_i:i\in F).\) For \(U\subseteq[n]\), let \(G_U\) denote the subgraph of \(G\) induced by \(U\). The complement of \(G\), denoted by \(G^c\), is the graph on \([n]\) with 
\[ E(G^c)=\binom{[n]}2\setminus E(G) =\bigl\{\{i,j\}:1\leq i<j\leq n,\ \{i,j\}\notin E(G)\bigr\}.\]

A \(3\)-clique of \(G\) is called a \emph{triangle}, and the set of all triangles of \(G\) is denoted by \(\mathcal T(G)\). The number of connected components of \(G\) having at least two vertices is denoted by \(c(G)\); thus isolated vertices are not counted. A connected component isomorphic to \(K_2\) is called a \(K_2\)-component.

If \(G\) contains a cycle, then its \emph{girth}, denoted by \(\operatorname{girth}(G)\), is the length of its shortest cycle. If \(G\) is a forest, then we set \(\operatorname{girth}(G)=\infty\). The complementary edge ideal of \(G\), denoted by \(I_c(G)\), which is defined as
\[
I_c(G) =\left(x_{[n]\setminus e}:e\in E(G)\right)\subseteq R.
\]
In particular, if \(E(G)\ne\emptyset\), then \(I_c(G)\) is generated in degree \(n-2\). The minimal primary decomposition of a complementary edge ideal is described as in the following lemma.

\begin{lemma}[{\cite[Theorem~2.1]{fm25}}]\label{lm:primary-decomposition}
Let \(G\) be a finite simple graph without isolated vertices. Then
\[
I_c(G)
=
\left(\bigcap_{e\in E(G^c)}P_e\right)
\cap
\left(\bigcap_{T\in\mathcal T(G)}P_T\right),
\]
and this decomposition is irredundant.
\end{lemma}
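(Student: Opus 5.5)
Since $I_c(G)$ is square-free, it is the intersection of the minimal primes of the Stanley--Reisner complex $\Delta$ with $I_c(G)=I_\Delta$. So the plan is to identify $\Delta$ and read off its facets: $P_F$ is a minimal prime of $I_c(G)$ exactly when $[n]\setminus F$ is a facet of $\Delta$. A square-free monomial $x_W$ lies outside $I_c(G)$ iff no edge $e$ of $G$ satisfies $[n]\setminus e\subseteq W$, i.e.\ iff no edge of $G$ contains $[n]\setminus W$. Writing $F=[n]\setminus W$, the faces of $\Delta$ are therefore the complements of sets $F$ that are contained in no edge of $G$. Equivalently, $P_F\supseteq I_c(G)$ iff for every edge $e$ we have $F\not\subseteq e$.

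The key step is to find the minimal sets $F$ contained in no edge of $G$; the corresponding primes $P_F$ are exactly the minimal primes. We split by the size of $F$.
\begin{itemize}
\item $|F|=0$ is impossible, since $\emptyset$ lies in every edge and $E(G)\neq\emptyset$ (there are no isolated vertices and $n\ge 2$; if $n\le1$ the statement is vacuous).
\item $|F|=1$ is impossible, because $G$ has no isolated vertices, so every singleton lies in some edge.
\item $|F|=2$ works exactly when $F$ is not an edge of $G$, i.e.\ $F\in E(G^c)$. Every such $F$ is minimal, since its proper subsets have size at most $1$ and are excluded.
\item $|F|=3$ is automatically contained in no edge. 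It is minimal iff all three of its $2$-subsets are edges of $G$, i.e.\ $F\in\mathcal T(G)$.
\item $|F|\ge4$ is never minimal. Every $3$-subset of such an $F$ is already contained in no edge, so $F$ properly contains a valid smaller set.
\end{itemize}

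Since a square-free monomial ideal equals the intersection of its minimal primes, this gives the displayed decomposition. Irredundancy then follows because the listed sets are pairwise incomparable minimal elements. Two distinct $2$-sets are incomparable. A $2$-set from $E(G^c)$ cannot lie inside a triangle of $G$, since all $2$-subsets of a triangle are edges of $G$. Two distinct triangles are incomparable.

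The main point requiring care is the first translation, namely the equivalence $P_F\supseteq I_c(G)\iff F\not\subseteq e$ for all $e\in E(G)$. To check it, note that $P_F$ contains the monomial $x_{[n]\setminus e}$ iff $F\cap([n]\setminus e)\neq\emptyset$, which holds iff $F\not\subseteq e$. Everything else is the case analysis above on $|F|$.
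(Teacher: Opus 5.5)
Your proof is correct, and nothing needs repair. The paper gives no argument for this lemma; it quotes it from Ficarra--Moradi \cite[Theorem~2.1]{fm25}, so your proposal is a self-contained replacement for the citation rather than a variant of something in the text.

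Your route is the natural direct verification. A monomial prime $P_F$ contains $I_c(G)$ iff $F$ meets every $[n]\setminus e$, i.e.\ $F\not\subseteq e$ for all $e\in E(G)$; you then classify the inclusion-minimal such $F$ by $|F|$. The citation buys brevity, but your argument has a few small advantages:
\begin{enumerate}
\item It shows exactly where the hypothesis enters. Absence of isolated vertices is used only to rule out $|F|=1$, and, together with $E(G)\neq\emptyset$, to rule out $|F|=0$.
\item The same case analysis without that hypothesis (still assuming $E(G)\neq\emptyset$) immediately gives
$\Min(I_c(G))=\{(x_w):w\in W\}\cup\{P_e:e\in E(H^c)\}\cup\{P_T:T\in\mathcal T(H)\}$,
where $W$ is the set of isolated vertices and $H=G_{[n]\setminus W}$. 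When $|V(H)|\ge3$, the paper obtains this instead by combining Lemma~\ref{lem:isolated-reduction} with the present lemma applied to $H$.
\item It handles the degenerate case $G=K_2$ consistently with the empty-intersection convention: there $I_c(G)=R$ and both index sets are empty.
\end{enumerate}
Your irredundancy argument is also fine: distinct minimal primes are incomparable, and you additionally check the incomparability of the $2$-sets and triangles directly.
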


Throughout the paper, whenever symbolic powers of \(I_c(G)\) are considered, we assume that
\(I_c(G)\) is a nonzero proper ideal. In particular, \(n\geq3\) and \(E(G)\neq\emptyset\).

We recall the definition for symbolic powers that will be used throughout the paper. Let \(I\subsetneq R\) be a nonzero homogeneous ideal. For \(t\ge1\), the \(t\)-th symbolic power of \(I\) is defined by
\[
I^{(t)}
=
\bigcap_{P\in\Min(I)}
\bigl(I^tR_P\cap R\bigr).
\]
In particular, if \(I\) is a square-free monomial ideal, then
\[
I=\bigcap_{P\in\Min(I)}P
\qquad\text{and}\qquad
I^{(t)}=\bigcap_{P\in\Min(I)}P^t.
\]
Since \(I_c(G)\) is square-free, by  Lemma~\ref{lm:primary-decomposition}, for every \(t\ge1\),
\[
I_c(G)^{(t)}
=
\left(\bigcap_{e\in E(G^c)}P_e^t\right)
\cap
\left(\bigcap_{T\in\mathcal T(G)}P_T^t\right)
\]
whenever \(G\) has no isolated vertices.

\subsection{Regularity, depth, and simplicial complexes}

Let \(M\) be a nonzero finitely generated graded \(R\)-module. For \(i\ge0\), consider
\[
a_i(M)=\max\{j:H^i_{\mathfrak m}(M)_j\ne0\},
\]
where \(\max\emptyset=-\infty\). The Castelnuovo-Mumford regularity and  the depth of \(M\) are defined by
\[
\reg(M)=\max_{i\ge0}\{a_i(M)+i\}, \text{ and }
\depth(M)
=
\min\{i\ge0:H^i_{\mathfrak m}(M)\ne0\}.
\]
We write \(\pd_R(M)\) for the projective dimension of \(M\) over \(R\). Since \(R\) is a polynomial ring of dimension \(n\), the Auslander--Buchsbaum formula is
\[
\pd_R(M)=n-\depth(M).
\]
For a nonzero proper homogeneous ideal \(I\subset R\), we have the relation
\[
\reg(I)=\reg(R/I)+1.
\]

A simplicial complex $\Delta$ on $[n]$ is a collection of subsets of $[n]$ such that, whenever $F\in\Delta$ and $G\subseteq F$, one has $G\in\Delta$. Its maximal faces (with respect to inclusion) are called \emph{facets}, and their set is denoted by \(\mathcal F(\Delta)\). For $F \in \Delta$, the dimension of $F$ is defined to be $\dim F = |F| - 1$. The empty set, $\emptyset$, is the unique face of dimension $-1$, as long as $\Delta$ is not the void complex $\{ \}$ consisting of no subsets of \([n]\). The complex \(\Delta\) is called \emph{pure} if all its facets have the same cardinality. The dimension of \(\Delta\) is $\dim \Delta = \max \{ \dim F \colon F \in \Delta \}$.

For \(\sigma\in\Delta\), the link of \(\sigma\) in \(\Delta\) is its subcomplex and is determined by
\[
\lk_\Delta(\sigma)
=
\{F\in\Delta:F\cap\sigma=\emptyset,\ F\cup\sigma\in\Delta\}.
\]
The 1-skeleton of $\Delta$ is the graph with vertex set $V(\Delta)$ and edge set consisting of the two-element faces of $\Delta$. We say that $\Delta$ is connected if its 1-skeleton is connected.

The Stanley--Reisner ideal and the Stanley--Reisner ring of \(\Delta\) are
\[
I_\Delta=(x_F:F\subseteq[n],\ F\notin\Delta),
\qquad
k[\Delta]=R/I_\Delta.
\]
Conversely, every square-free monomial ideal \(I\subseteq R\) is the Stanley--Reisner ideal of a unique simplicial complex, denoted by \(\Delta(I)\). The irredundant primary decomposition of \(I_\Delta\) is
\[
I_\Delta
=
\bigcap_{F\in\mathcal F(\Delta)}P_{[n]\setminus F},
\]
and hence
\[
I_\Delta^{(t)}
=
\bigcap_{F\in\mathcal F(\Delta)}
P_{[n]\setminus F}^{\,t}
\qquad \text{ for all } t\ge1 .
\]

The Alexander dual of \(\Delta\) is defined by
$\Delta^*
=
\{[n]\setminus F:F\notin\Delta\}.$ The following two formulas relate the homological invariants of Stanley-Reisner ideals to the topology of simplicial complexes.

\begin{lemma}[{\cite[Theorem~13.13]{MS}}]\label{Hochster-Reg}
For every simplicial complex \(\Delta\),
\[
\reg(I_\Delta)
=
1+\max\left\{
q:
\widetilde H_{q-1}\bigl(\lk_\Delta(\sigma);k\bigr)\ne0
\text{ for some }\sigma\in\Delta
\right\}.
\]
\end{lemma}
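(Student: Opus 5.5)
The plan is to derive the formula from Hochster's formula for the local cohomology of the Stanley--Reisner ring, combined with the relation \(\reg(I_\Delta)=\reg(k[\Delta])+1\) recalled above. Since \(\reg(I_\Delta)=\reg(R/I_\Delta)+1\), it suffices to show
\[
\reg(k[\Delta])=\max\left\{q:\widetilde H_{q-1}\bigl(\lk_\Delta(\sigma);k\bigr)\neq0\text{ for some }\sigma\in\Delta\right\}.
\]

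The key input is the \(\mathbb Z^n\)-graded Hochster formula for local cohomology. For \(\mathbf a\in\mathbb Z^n\), the component \(H^i_{\m}(k[\Delta])_{\mathbf a}\) vanishes unless \(\mathbf a\le 0\) coordinatewise and \(\sigma:=\operatorname{supp}(\mathbf a)\in\Delta\). In that case
\[
\dim_k H^i_{\m}(k[\Delta])_{\mathbf a}=\dim_k\widetilde H_{i-|\sigma|-1}\bigl(\lk_\Delta(\sigma);k\bigr).
\]
I would quote this from \cite{MS}. Alternatively, it can be obtained from the \v{C}ech complex: the degree-\(\mathbf a\) strand of \(\check C^\bullet\otimes k[\Delta]\) is identified with a cochain complex computing the reduced cohomology of \(\lk_\Delta(\sigma)\), shifted by \(|\sigma|\).

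Next I pass to the \(\mathbb Z\)-grading. A multidegree \(\mathbf a\le0\) with support \(\sigma\) has total degree \(j=-\sum_k|a_k|\le-|\sigma|\). Equality holds exactly for \(\mathbf a=-\mathbf 1_\sigma\), and the nonvanishing condition depends only on \(\sigma\). Hence
\[
a_i(k[\Delta])=\max\{-|\sigma|:\sigma\in\Delta,\ \widetilde H_{i-|\sigma|-1}(\lk_\Delta\sigma;k)\neq0\}.
\]
It follows that
\[
a_i(k[\Delta])+i=\max\{\,i-|\sigma|\,\}
\]
over the same set of faces. Setting \(q=i-|\sigma|\), the nonvanishing condition becomes \(\widetilde H_{q-1}(\lk_\Delta\sigma)\neq0\). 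Taking the maximum over \(i\) is then the same as taking the maximum over all pairs \((\sigma,q)\) with \(\widetilde H_{q-1}(\lk_\Delta\sigma)\neq0\), since each such pair arises from \(i=q+|\sigma|\). This gives the displayed identity.

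The main obstacle is the Hochster formula itself, specifically the identification of the multigraded strand of the \v{C}ech complex with the link's cochain complex, including the sign and shift bookkeeping. I would cite it rather than reprove it. A minor point to check is the degenerate case: when \(\sigma\) is a facet, \(\lk_\Delta(\sigma)=\{\emptyset\}\) has \(\widetilde H_{-1}\neq0\). This is consistent with the convention \(q=0\) and with \(\depth\) and \(\dim\) contributions, and it guarantees that the maximum is taken over a nonempty set.
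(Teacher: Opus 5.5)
Your proposal is correct and is essentially the intended argument. The paper gives no proof but cites \cite[Theorem~13.13]{MS}, which is the multigraded Hochster formula for $H^i_{\m}(k[\Delta])$; your computation $a_i(k[\Delta])=\max\{-|\sigma|\}$ followed by the reindexing $q=i-|\sigma|$ is exactly how the stated regularity formula is read off from it, and the paper uses the same formula in the same way for depth in Lemma~\ref{lem:depth-skeleton}. As in the paper, the identity $\reg(I_\Delta)=\reg(k[\Delta])+1$ tacitly assumes $I_\Delta$ is nonzero and proper, i.e.\ $\Delta$ is neither void nor the full simplex.
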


\begin{lemma}[Terai's formula;  {\cite{T1}}]\label{TeraiF}
For every simplicial complex \(\Delta\) on \([n]\),
\[
\reg(I_\Delta)=\pd_R(R/I_{\Delta^*}).
\]
\end{lemma}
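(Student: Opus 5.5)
We would derive Terai's formula by comparing two Hochster-type formulas through combinatorial Alexander duality. On one side, Lemma~\ref{Hochster-Reg} expresses $\reg(I_\Delta)$ through the reduced homology of the links $\lk_\Delta(\sigma)$, $\sigma\in\Delta$. On the other side, Hochster's formula for Betti numbers gives, for every $W\subseteq[n]$,
\[
\beta_{i,W}(R/I_{\Delta^*})=\dim_k\widetilde H_{|W|-i-1}\bigl((\Delta^*)_W;k\bigr),
\]
where $(\Delta^*)_W$ is the induced subcomplex on $W$. Hence
\[
\pd_R(R/I_{\Delta^*})=\max\bigl\{i:\widetilde H_{|W|-i-1}\bigl((\Delta^*)_W;k\bigr)\neq0\text{ for some }W\subseteq[n]\bigr\}.
\]
The goal is to match the nonvanishing homology in the two descriptions under the correspondence $\sigma=[n]\setminus W$.

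The key step is a link/restriction duality. Fix $W\subseteq[n]$ and put $\sigma=[n]\setminus W$.

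First suppose $\sigma\in\Delta$. We check directly from the definition of $\Delta^*$ that $(\Delta^*)_W$ is the Alexander dual of $\lk_\Delta(\sigma)$, taken inside the vertex set $W$. Indeed, for $F\subseteq W$ we have $F\in\Delta^*$ if and only if $[n]\setminus F\notin\Delta$. Since $[n]\setminus F=\sigma\cup(W\setminus F)$, this holds if and only if $W\setminus F\notin\lk_\Delta(\sigma)$.

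Combinatorial Alexander duality on a ground set of size $m=|W|$ then gives
\[
\widetilde H_{j}\bigl((\Delta^*)_W\bigr)\cong\widetilde H^{\,m-j-3}\bigl(\lk_\Delta(\sigma)\bigr).
\]
Over a field, cohomology and homology have equal dimensions, so we may replace the right-hand side by $\widetilde H_{m-j-3}(\lk_\Delta(\sigma))$.

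Now substitute $j=m-i-1$. This gives $m-j-3=i-2$, so
\[
\widetilde H_{m-i-1}\bigl((\Delta^*)_W\bigr)\neq0
\iff
\widetilde H_{i-2}\bigl(\lk_\Delta(\sigma)\bigr)\neq0.
\]
Setting $q=i-1$, the right-hand side says $\widetilde H_{q-1}(\lk_\Delta(\sigma))\neq0$, which contributes $1+q=i$ to the maximum in Lemma~\ref{Hochster-Reg}. So each such $W$ contributes the same value $i$ to both maxima.

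Next suppose $\sigma\notin\Delta$. Then every subset $F\subseteq W$ satisfies $[n]\setminus F\supseteq\sigma$, so $[n]\setminus F\notin\Delta$ and $F\in\Delta^*$. Thus $(\Delta^*)_W$ is a full simplex. When $W\neq\emptyset$ this simplex is acyclic and contributes nothing to the projective dimension. When $W=\emptyset$ it contributes only $\beta_{0,\emptyset}$, that is, $i=0$.

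Taking maxima over all $W$, equivalently over all $\sigma\in\Delta$, now yields $\reg(I_\Delta)=\pd_R(R/I_{\Delta^*})$.

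The main obstacle is the bookkeeping in degenerate cases, not the central duality. We must check the following.
\begin{itemize}
\item The conventions $\widetilde H_{-1}(\{\emptyset\})=k$ and $\widetilde H_*(\{\})=0$ are used consistently. For example, when $\lk_\Delta(\sigma)=\{\emptyset\}$, that is, when $\sigma$ is a facet, the contribution is $q=0$.
\item The Alexander duality isomorphism is valid in the extreme cases where $\lk_\Delta(\sigma)$ is the full simplex on $W$, or where $(\Delta^*)_W$ is void or equal to $\{\emptyset\}$.
\item The contribution $i=0$ coming from $W=\emptyset$ is dominated by, or equal to, the regularity side, so that neither maximum picks up a spurious value.
\end{itemize}
We would first verify the duality statement $\widetilde H_j(\Gamma^\vee)\cong\widetilde H^{m-j-3}(\Gamma)$ for these boundary complexes by hand, and then carry out the reindexing above.
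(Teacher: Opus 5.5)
Your argument is correct. Note, though, that the paper gives no proof of this lemma: it simply quotes Terai. So the comparison is between a citation and your self-contained derivation.

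Your route is the standard one: Hochster's Betti number formula for $R/I_{\Delta^*}$, the identity $(\Delta^*)_W=(\lk_\Delta([n]\setminus W))^\vee$ on the ground set $W$, and combinatorial Alexander duality. It uses only tools the paper already invokes elsewhere, namely Lemma~\ref{Hochster-Reg} and the Hochster-type depth formula in the proof of Lemma~\ref{lem:depth-skeleton}. It also proves more than the equality of the two maxima. It gives the multigraded statement $\beta_{i,W}(R/I_{\Delta^*})=\dim_k\widetilde H_{i-2}(\lk_\Delta([n]\setminus W);k)$ whenever $[n]\setminus W\in\Delta$, with every other multigraded Betti number zero except $\beta_{0,\emptyset}$. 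The citation buys only brevity.

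Your degenerate-case checklist resolves cleanly:
\begin{itemize}
\item Combinatorial Alexander duality holds for every complex on a ground set of size $m\geq 1$, with $\widetilde H_{-1}(\{\emptyset\};k)=k$ and the void complex acyclic. It fails only for $m=0$.
\item In your argument, both $m=0$ with $\sigma\in\Delta$ and $\lk_\Delta(\sigma)$ being the full simplex on $W$ force $[n]\in\Delta$. So you should state explicitly that $\Delta$ is neither void nor the full simplex. That hypothesis is needed anyway for $I_\Delta$ and $I_{\Delta^*}$ to be nonzero proper ideals.
\item The spurious value $i=0$ from $W=\emptyset$ is dominated. Any facet $\sigma\neq[n]$ of $\Delta$ has $\lk_\Delta(\sigma)=\{\emptyset\}$, so it contributes the value $1$ to both maxima.
\end{itemize}
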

% The Takayama's formula (see \cite{MT11,T}), which generalizes Hochster's
% formula, is stated as follows.

% \begin{theorem}[Takayama]
% Let $I$ be a monomial ideal in $R=k[x_1,\ldots,x_n]$. For each $i\in\mathbb{Z}$ and
%$\mathbf{a}\in\mathbb{Z}^n$, there is an isomorphism of $k$-vector spaces
%\[
%H^i_{\mathfrak m}(S/I)_{\mathbf a}
%\cong
%\widetilde{H}^{\,i-|G_{\mathbf a}|-1}
%\bigl(\Delta_{\mathbf a}(I);k\bigr),
%\]
%and $H^i_{\mathfrak m}(S/I)_{\mathbf a}=0$ if $G_{\mathbf a}\notin\Delta\bigl(\sqrt{I}\bigr).$
%\end{theorem}
We next recall the following depth criterion.

\begin{lemma}\label{lem:depth-skeleton}
Let \(\Sigma\) be a simplicial complex with vertex set \([m]\), where \(m\ge2\), let \(T=k[x_1,\ldots,x_m]\), and \(\mathfrak n=(x_1,\ldots,x_m)\). Then 
\(\depth k[\Sigma]\ge2 \) if and only if the \(1\)-skeleton of \(\Sigma\) is connected. Consequently, if the \(1\)-skeleton is connected, then \(\pd_T(T/I_\Sigma)\le m-2,\) whereas, if it is disconnected, then \(\depth(k[\Sigma])=1\) and \(\pd_T(T/I_\Sigma)=m-1.\)
\end{lemma}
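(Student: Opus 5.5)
The plan is to derive this from Hochster's formula for local cohomology together with the standard description of depth at most one. The convention is that \(\Sigma\) has vertex set \([m]\), so every \(\{i\}\) with \(i\in[m]\) is a face. Recall Hochster's formula:
\[
\dim_k H^i_{\mathfrak n}(k[\Sigma])_{\mathbf a}
=\dim_k \widetilde H_{i-|F|-1}\bigl(\lk_\Sigma F;k\bigr),
\]
where \(\mathbf a\le 0\) and \(F=\operatorname{supp}\mathbf a\in\Sigma\); all other multidegrees contribute zero.

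First I will record that \(H^0_{\mathfrak n}(k[\Sigma])=0\). Only \(F=\emptyset\) could contribute, through \(\widetilde H_{-1}(\Sigma)\), and this vanishes because \(m\ge2\) forces \(\Sigma\ne\{\emptyset\}\). Hence \(\depth k[\Sigma]\ge1\), as one also expects since \(\m\) is not an associated prime of a nonzero square-free monomial ideal unless \(\Sigma\) has no vertices.

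Next I analyse \(H^1_{\mathfrak n}\). The contributions are \(\widetilde H_0(\Sigma)\) from \(F=\emptyset\), and \(\widetilde H_{-1}(\lk_\Sigma F)\) from \(|F|=1\). The latter is nonzero exactly when \(\lk_\Sigma\{i\}=\{\emptyset\}\), that is, when \(i\) is an isolated vertex. Since \(m\ge2\), an isolated vertex already makes the \(1\)-skeleton disconnected, and then \(\widetilde H_0(\Sigma)\ne0\) as well. So \(H^1_{\mathfrak n}(k[\Sigma])=0\) if and only if \(\widetilde H_0(\Sigma;k)=0\). Because connectivity of \(\Sigma\) is the same as connectivity of its \(1\)-skeleton, this is exactly the criterion: \(\depth k[\Sigma]\ge2\) if and only if the \(1\)-skeleton is connected.

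The consequences then follow from the Auslander--Buchsbaum formula over \(T\).
\begin{itemize}
\item If the \(1\)-skeleton is connected, then \(\depth\ge2\), so \(\pd_T(T/I_\Sigma)=m-\depth\le m-2\).
\item If it is disconnected, then \(H^0_{\mathfrak n}=0\) and \(H^1_{\mathfrak n}\ne0\), so \(\depth=1\) and \(\pd_T(T/I_\Sigma)=m-1\).
\end{itemize}

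The only delicate point is bookkeeping. One must use the convention that all of \([m]\) are vertices, so that \(\Sigma\) is not the irrelevant complex \(\{\emptyset\}\). One must also handle the isolated-vertex link contributions correctly. Everything else is a direct reading of Hochster's formula.
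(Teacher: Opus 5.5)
Your proposal is correct and follows essentially the same route as the paper. Both arguments read off the depth from the multigraded Hochster formula: $\widetilde H_{-1}(\Sigma;k)=0$, $\widetilde H_0(\Sigma;k)=0$ exactly when $\Sigma$ is connected, and a nonzero $\widetilde H_{-1}$ of a vertex link arises only from an isolated vertex, which with $m\ge2$ already forces disconnectedness. The projective-dimension statements then follow from Auslander--Buchsbaum, as in the paper.
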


\begin{proof}
By the multigraded form of Hochster's formula, we have
\[
\depth(k[\Sigma])
=
\min\left\{
|F|+i+1:
F\in\Sigma,\ i\ge-1,\ 
\widetilde H_i\bigl(\lk_\Sigma(F);k\bigr)\neq0
\right\}.
\]

% Since \(\Sigma\) has vertex set \([m]\), the value \(0\) cannot occur, and hence 
Obviously, \(\depth(k[\Sigma])\ge1\). Suppose that the \(1\)-skeleton of \(\Sigma\) is connected. Then $\widetilde H_0(\Sigma;k)=0.$ Moreover, since the \(1\)-skeleton is connected and \(m\ge2\), every vertex belongs to an edge. Thus, \(\lk_\Sigma(v)\) is nonempty for every vertex \(v\), and hence $\widetilde H_{-1}\bigl(\lk_\Sigma(v);k\bigr)=0.$ It follows that \(\depth (k[\Sigma])\ge2\).

Conversely, if the \(1\)-skeleton is disconnected, then \(\Sigma\) is disconnected and $\widetilde H_0(\Sigma;k)\ne0.$ Taking \(F=\emptyset\) and \(i=0\), by the depth formula, we get \(\depth(k[\Sigma]) \le1\). Thus, $\depth(k[\Sigma])=1.$

The statement about projective dimension now follow from the Auslander-Buchsbaum formula over \(T\).
\end{proof}

Recall that a Noetherian ring \(R\) is said to satisfy Serre's
condition \((S_2)\) if
\[
\depth(R_{\mathfrak p})
\ge
\min\{2,\dim(R_{\mathfrak p})\}
\]
for every \(\mathfrak p\in\Spec(R)\).

For Stanley--Reisner rings, this condition can be described in terms
of links.

\begin{lemma}[{\cite[Corollary~2.4]{PFTY}}]\label{lem:S2-links}
Let \(\Delta\) be a simplicial complex. Then \(k[\Delta]\) satisfies Serre's condition \((S_2)\) if and only if \(\Delta\) is pure and \(\lk_\Delta(F)\) is connected for every \(F\in\Delta\) such that \(\dim(\lk_\Delta(F)) \ge1.\)
\end{lemma}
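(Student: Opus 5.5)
The plan is to reduce Serre's condition \((S_2)\) for \(k[\Delta]\) to a depth condition on the Stanley--Reisner rings of links, and then to apply Lemma~\ref{lem:depth-skeleton}.

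\emph{Step 1: reduce to monomial primes and identify the localizations.} Since \(k[\Delta]\) is \(\mathbb Z^n\)-graded, for any prime \(\mathfrak p\) let \(\mathfrak p^*\) be the largest multigraded prime contained in \(\mathfrak p\). Then \(\mathfrak p^*\) is a monomial prime, and the standard results on graded rings (Bruns--Herzog, Ch.~1.5) give
\[
\depth k[\Delta]_{\mathfrak p}-\depth k[\Delta]_{\mathfrak p^*}=\dim k[\Delta]_{\mathfrak p}-\dim k[\Delta]_{\mathfrak p^*}.
\]
Hence \((S_2)\) needs to be checked only at monomial primes \(\mathfrak p=P_{[n]\setminus F}\) containing \(I_\Delta\), that is, with \(F\in\Delta\). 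For such \(F\), inverting the variables \(x_i\) with \(i\in F\) gives
\[
k[\Delta]_{x_F}\cong k[\lk_\Delta F]\otimes_k k[x_i^{\pm1}:i\in F],
\]
where \(k[\lk_\Delta F]\) is taken over the variables indexed by the vertices of \(\lk_\Delta F\). Localizing further at \(\mathfrak p\) shows that \(\depth k[\Delta]_{\mathfrak p}=\depth k[\lk_\Delta F]\) and \(\dim k[\Delta]_{\mathfrak p}=\dim \lk_\Delta F+1\). Therefore \((S_2)\) is equivalent to
\[
\depth k[\lk_\Delta F]\ge\min\{2,\dim\lk_\Delta F+1\}\qquad\text{for all }F\in\Delta.
\]

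\emph{Step 2: translate the depth condition.} A nonvoid link always has depth \(\ge1\), so the condition is automatic when \(\dim\lk_\Delta F\le0\). When \(\dim\lk_\Delta F\ge1\), the link has at least two vertices, and Lemma~\ref{lem:depth-skeleton} says that depth \(\ge2\) holds exactly when the \(1\)-skeleton of the link is connected. Thus \((S_2)\) is equivalent to: \(\lk_\Delta F\) is connected for every \(F\in\Delta\) with \(\dim\lk_\Delta F\ge1\). This already gives the implication ``pure and connected links \(\Rightarrow (S_2)\)'', so purity is not needed in that direction.

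\emph{Step 3 (main obstacle): show that connected links force purity.} Suppose \(\Delta\) is not pure. Choose a face \(F\), maximal under inclusion, such that \(\Gamma=\lk_\Delta F\) is not pure; the empty face is a candidate, so such an \(F\) exists.
\begin{itemize}
\item Since \(\Gamma\) is not pure, \(\dim\Gamma\ge1\). By the connectivity condition, \(\Gamma\) is connected.
\item For each vertex \(v\) of \(\Gamma\), the link \(\lk_\Gamma(v)=\lk_\Delta(F\cup\{v\})\) is pure, by the maximality of \(F\).
\item Since \(\Gamma\) is connected, its facets are linked by chains in which consecutive facets share a vertex. So some two facets \(A,B\) of different dimensions share a vertex \(v\).
\item Then \(A\setminus\{v\}\) and \(B\setminus\{v\}\) are facets of \(\lk_\Gamma(v)\) of different dimensions, contradicting the purity of \(\lk_\Gamma(v)\).
\end{itemize}
Hence \(\Delta\) is pure, which proves the forward implication.

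The delicate points are the reduction to monomial primes in Step 1, and making the chain argument in Step 3 precise.
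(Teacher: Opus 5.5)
The paper does not prove this lemma; it simply quotes \cite[Corollary~2.4]{PFTY}. Your argument is a correct, self-contained proof, and it follows the standard route for such link criteria. You reduce \((S_2)\) to the monomial primes \(P_{[n]\setminus F}\) with \(F\in\Delta\). You then identify the localization there with \(k[\lk_\Delta F]\), after extending scalars to a rational function field, localized at its irrelevant ideal. Finally you apply Lemma~\ref{lem:depth-skeleton}. The maximal-counterexample argument in Step~3, including the chain of facets sharing vertices and the use of \(\lk_{\lk_\Delta F}(v)=\lk_\Delta(F\cup\{v\})\), is sound and shows that connectivity of all links of dimension at least one already forces purity.

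Compared with the citation, your version uses only tools already in the paper (Lemmas~\ref{lem:localization} and~\ref{lem:depth-skeleton}). It also makes visible that, with the paper's definition of \((S_2)\), the purity hypothesis is redundant: \(k[\Delta]\) satisfies \((S_2)\) if and only if every link of dimension at least one is connected. The cited source, by contrast, places the result inside Terai's homological characterization of \((S_r)\) for all \(r\).

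Two small points need tidying.

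\emph{Step~2.} If \(F\) is a facet, then \(\lk_\Delta F=\{\emptyset\}\) and \(k[\lk_\Delta F]=k\) has depth \(0\), not \(\ge 1\). However, the required bound there is \(\min\{2,0\}=0\), so the conclusion is unaffected.

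\emph{Step~1.} You can avoid appealing to the \(\mathbb Z^n\)-graded results of Bruns--Herzog. For a prime \(\mathfrak p\supseteq I_\Delta\), put \(F=\{i:x_i\notin\mathfrak p\}\). Then \(F\in\Delta\) and \(\mathfrak p^*=P_{[n]\setminus F}\). The ring \(k[\Delta]_{\mathfrak p}\) is a flat local extension of \(k[\lk_\Delta F]_{\mathfrak n}\), whose closed fiber is a localization of the Laurent polynomial ring \(k[x_i^{\pm1}:i\in F]\) and hence regular. So depth and dimension increase by the same amount, and \((S_2)\) at \(\mathfrak p^*\) implies \((S_2)\) at \(\mathfrak p\).
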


A simplicial complex \(\Delta\) is called a \emph{matroid complex} if, whenever \(F,G\in\Delta\) and \(|F|<|G|\), there exists \(j\in G\setminus F\) such that \(F\cup\{j\}\in\Delta\). The facets of a matroid complex are called its \emph{bases}, and their set is denoted by \(\mathcal B(\Delta)\). All bases have the same cardinality, called the rank. The dual matroid \(\Delta^\perp\) is defined by
\[
\mathcal B(\Delta^\perp)
=
\{[n]\setminus B:B\in\mathcal B(\Delta)\}.
\]
% Note that this matroid dual is different from the Alexander dual \(\Delta^*\). 
A vertex contained in no basis is called a \emph{loop}. The uniform matroid \(U_{r,n}\) consists of all subsets of \([n]\) of cardinality at most \(r\).

\subsection{Degree complexes and monomial localization}

The multigraded local cohomology of a monomial quotient can be described by means of degree complexes. Let \(I\subseteq R\) be a monomial ideal and let \(\boldsymbol\alpha=(\alpha_1,\ldots,\alpha_n)\in\mathbb Z^n\). Set $G_{\boldsymbol\alpha} = \{i:\alpha_i<0\}.$ For \(F\subseteq[n]\setminus G_{\boldsymbol\alpha}\), set
\(R_{F\cup G_{\boldsymbol\alpha}} = R[x_i^{-1}:i\in F\cup G_{\boldsymbol\alpha}].\) The degree complex of \(I\) at \(\boldsymbol\alpha\) is defined by
\begin{equation}\label{degree-complex}
\Delta_{\boldsymbol\alpha}(I)
=
\left\{
F\subseteq[n]\setminus G_{\boldsymbol\alpha}:
x^{\boldsymbol\alpha}\notin
I R_{F\cup G_{\boldsymbol\alpha}}
\right\}.
\end{equation}

The connection with local cohomology is given by Takayama's formula.

\begin{lemma}[Takayama's formula {\cite[Theorem~2.2]{T}}]\label{TA}
For every \(i\ge0\),
\[
\dim_{k}(H^i_{\mathfrak m}(R/I)_{\boldsymbol\alpha})
=
\dim_{k}
\widetilde H_{i-|G_{\boldsymbol\alpha}|-1}
\bigl(\Delta_{\boldsymbol\alpha}(I);k\bigr).
\]
\end{lemma}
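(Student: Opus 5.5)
The plan is to compute $H^i_{\mathfrak m}(R/I)$ with the Čech complex and identify its multidegree-$\boldsymbol\alpha$ strand with a shifted augmented cochain complex of $\Delta_{\boldsymbol\alpha}(I)$. Recall that $H^i_{\mathfrak m}(R/I)$ is the $i$-th cohomology of
\[
C^\bullet:\ 0\to R/I\to\bigoplus_{|F|=1}(R/I)_{x_F}\to\cdots\to\bigoplus_{|F|=i}(R/I)_{x_F}\to\cdots\to(R/I)_{x_{[n]}}\to0,
\]
with the usual alternating-sign differentials induced by the natural localization maps. Since $I$ is monomial, $C^\bullet$ is $\mathbb Z^n$-graded. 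Hence $H^i_{\mathfrak m}(R/I)_{\boldsymbol\alpha}$ is the $i$-th cohomology of the strand $C^\bullet_{\boldsymbol\alpha}$ of $k$-vector spaces.

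\emph{Step 1: compute each summand of the strand.} Fix $F\subseteq[n]$. The degree-$\boldsymbol\alpha$ part of $R_{x_F}$ is $k\cdot x^{\boldsymbol\alpha}$ if $\alpha_i\ge0$ for all $i\notin F$, that is, if $G_{\boldsymbol\alpha}\subseteq F$, and it is $0$ otherwise. Since $IR_{x_F}$ is a monomial ideal of the monomial ring $R_{x_F}$, we get $((R/I)_{x_F})_{\boldsymbol\alpha}\cong k$ when $G_{\boldsymbol\alpha}\subseteq F$ and $x^{\boldsymbol\alpha}\notin IR_{x_F}$, and $0$ otherwise.

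\emph{Step 2: reparametrize by $H=F\setminus G_{\boldsymbol\alpha}$.} With $F=H\cup G_{\boldsymbol\alpha}$, the condition in Step 1 says exactly that $H\in\Delta_{\boldsymbol\alpha}(I)$, by \eqref{degree-complex}. This family is closed under taking subsets: if $H'\subseteq H$, then $R_{H'\cup G_{\boldsymbol\alpha}}\subseteq R_{H\cup G_{\boldsymbol\alpha}}$, so $x^{\boldsymbol\alpha}\notin IR_{H\cup G_{\boldsymbol\alpha}}$ implies $x^{\boldsymbol\alpha}\notin IR_{H'\cup G_{\boldsymbol\alpha}}$. Thus $\Delta_{\boldsymbol\alpha}(I)$ is a simplicial complex, possibly the void complex. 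Consequently
\[
C^i_{\boldsymbol\alpha}\cong\bigoplus_{H\in\Delta_{\boldsymbol\alpha}(I),\ |H|=i-|G_{\boldsymbol\alpha}|}k\,e_H ,
\]
and this is the space $\widetilde C^{\,j}(\Delta_{\boldsymbol\alpha}(I);k)$ of reduced cochains with $j=i-|G_{\boldsymbol\alpha}|-1$, since $j$-faces have $j+1$ elements. The empty face $H=\emptyset$, corresponding to $F=G_{\boldsymbol\alpha}$, gives the augmentation term in degree $j=-1$.

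\emph{Step 3: match the differentials.} The component of the Čech differential from $F$ to $F\cup\{v\}$ is $\pm$ the natural map $k\to k$. This map is the identity when both summands are nonzero, because $x^{\boldsymbol\alpha}\mapsto x^{\boldsymbol\alpha}$. The sign is $(-1)^{\#\{u\in F:u<v\}}$. This differs from the simplicial coboundary sign $(-1)^{\#\{u\in H:u<v\}}$ by the factor $(-1)^{\#\{u\in G_{\boldsymbol\alpha}:u<v\}}$, which depends only on $v$. I would rescale each basis vector $e_H$ by a sign $\epsilon(H)=\prod_{v\in H}(-1)^{\#\{u\in G_{\boldsymbol\alpha}:u<v\}}$; this gives an isomorphism of complexes $C^\bullet_{\boldsymbol\alpha}\cong\widetilde C^{\bullet-|G_{\boldsymbol\alpha}|-1}(\Delta_{\boldsymbol\alpha}(I);k)$. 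Taking cohomology, and using $\dim_k\widetilde H^{\,j}=\dim_k\widetilde H_j$ over a field, yields the formula. In the void case both sides are zero.

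The main obstacle I expect is this sign bookkeeping in Step 3, together with the careful verification in Step 1 that the degree-$\boldsymbol\alpha$ piece of a monomial localization is one-dimensional exactly when $G_{\boldsymbol\alpha}\subseteq F$. The rest is formal.
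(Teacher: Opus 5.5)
Your proof is correct. The $\mathbb Z^n$-graded \v{C}ech-complex computation, with the sign rescaling $\epsilon(H)$ absorbing the elements of $G_{\boldsymbol\alpha}$, is the standard argument for this formula (Hochster's computation for Stanley--Reisner rings, extended to monomial ideals) and is essentially how Takayama proves it. The paper gives no proof of its own and only cites \cite[Theorem~2.2]{T}; because you work directly with the localization description \eqref{degree-complex} of $\Delta_{\boldsymbol\alpha}(I)$ used in the paper, your Step~2 is immediate and you never need to compare it with Takayama's original exponent-based description of the degree complex.
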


For symbolic powers of Stanley--Reisner ideals, the facets of the degree complex have the following description.

\begin{lemma}{\cite[Lemma~1.3]{MT}}\label{MTr} Let \(\Delta\) be a simplicial complex on \([n]\), \(\boldsymbol\alpha\in\mathbb N^n\),and \(t \ge 1\). Then
\[
\mathcal F\bigl(\Delta_{\boldsymbol\alpha}(I_\Delta^{(t)})\bigr)
=
\left\{
F\in\mathcal F(\Delta):
\sum_{i\notin F}\alpha_i\le t-1
\right\}.
\]
\end{lemma}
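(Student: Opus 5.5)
The statement describes the facets of \(\Delta_{\boldsymbol\alpha}(I_\Delta^{(t)})\). The plan is to compute this degree complex directly from the definition \eqref{degree-complex}, using the primary decomposition \(I_\Delta^{(t)}=\bigcap_{F\in\mathcal F(\Delta)}P_{[n]\setminus F}^{\,t}\). Since \(\boldsymbol\alpha\in\mathbb N^n\), we have \(G_{\boldsymbol\alpha}=\emptyset\). So for \(H\subseteq[n]\), the face condition reads \(x^{\boldsymbol\alpha}\notin I_\Delta^{(t)}R_H\), where \(R_H=R[x_i^{-1}:i\in H]\).

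First, localization is flat, so it commutes with finite intersections:
\[
I_\Delta^{(t)}R_H=\bigcap_{F\in\mathcal F(\Delta)}P_{[n]\setminus F}^{\,t}R_H .
\]
Hence \(x^{\boldsymbol\alpha}\notin I_\Delta^{(t)}R_H\) if and only if there is a facet \(F\) with \(x^{\boldsymbol\alpha}\notin P_{[n]\setminus F}^{\,t}R_H\).

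Next, I analyse each localized prime power. Fix a facet \(F\).
\begin{itemize}
\item If \(H\not\subseteq F\), pick \(j\in H\setminus F\). Then \(x_j\in P_{[n]\setminus F}\) is a unit in \(R_H\), so \(P_{[n]\setminus F}^{\,t}R_H=R_H\), which contains \(x^{\boldsymbol\alpha}\).
\item If \(H\subseteq F\), the inverted variables avoid the generators of \(P_{[n]\setminus F}\). Because \(P_{[n]\setminus F}^{\,t}\) is a monomial ideal in variables disjoint from \(H\), we get \(P_{[n]\setminus F}^{\,t}R_H\cap R=P_{[n]\setminus F}^{\,t}\). Setting the variables \(x_i\), \(i\in F\), equal to \(1\) shows that \(x^{\boldsymbol\alpha}\in P_{[n]\setminus F}^{\,t}\) if and only if \(\sum_{i\notin F}\alpha_i\ge t\).
\end{itemize}

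Combining these steps gives
\[
\Delta_{\boldsymbol\alpha}(I_\Delta^{(t)})=\Bigl\{H:\ H\subseteq F\text{ for some }F\in\mathcal F(\Delta)\text{ with }\sum_{i\notin F}\alpha_i\le t-1\Bigr\}.
\]
This complex is generated by the facets \(F\) of \(\Delta\) satisfying the inequality. These facets are pairwise incomparable, being facets of \(\Delta\). Therefore they are exactly the facets of \(\Delta_{\boldsymbol\alpha}(I_\Delta^{(t)})\), which proves the statement.

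The only mildly delicate step is the second case of the prime-power analysis, namely the identity \(P_{[n]\setminus F}^{\,t}R_H\cap R=P_{[n]\setminus F}^{\,t}\) for \(H\subseteq F\) and the resulting membership criterion for \(x^{\boldsymbol\alpha}\). I expect this to be routine by the monomial argument given there.
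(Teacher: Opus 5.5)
Your proof is correct and is essentially the standard argument behind the cited result of Minh--Trung; the paper quotes \cite[Lemma~1.3]{MT} without reproducing a proof. The argument localizes $I_\Delta^{(t)}=\bigcap_{F\in\mathcal F(\Delta)}P_{[n]\setminus F}^{\,t}$, keeps only the components with $H\subseteq F$, and tests $x^{\boldsymbol\alpha}\in P_{[n]\setminus F}^{\,t}$ by $\sum_{i\notin F}\alpha_i\ge t$, which is the same technique the paper uses in its proof of Lemma~\ref{lem:localization}.
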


When \(\boldsymbol\alpha\in\mathbb Z^n\), note that if $G_\alpha\notin\Delta$, then $\Delta_\alpha(I_\Delta^{(t)})$ is the void complex; in the case $G_\alpha\in\Delta$, we have the following form.

\begin{lemma}{\cite[Lemma~1.3]{HT2}}\label{HoaTrLem}
Let \(\Delta\) be a simplicial complex on \([n]\), \(t \ge 1\) and let \(\boldsymbol\alpha\in\mathbb Z^n\) such that \(G_\alpha\in\Delta\) . Then
\[
\mathcal F\bigl(\Delta_{\boldsymbol\alpha}(I_\Delta^{(t)})\bigr)
=
\left\{
F\in\mathcal F\bigl(\lk_\Delta(G_{\boldsymbol\alpha})\bigr):
\sum_{i\notin F\cup G_{\boldsymbol\alpha}}\alpha_i\le t-1
\right\}.
\]
\end{lemma}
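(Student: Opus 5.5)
The plan is to unwind the definition \eqref{degree-complex} of the degree complex directly, using the primary decomposition $I_\Delta^{(t)}=\bigcap_{H\in\mathcal F(\Delta)}P_{[n]\setminus H}^{\,t}$ and the fact that localization at monomials commutes with finite intersections of monomial ideals. Write $G=G_{\boldsymbol\alpha}$ and fix $F\subseteq[n]\setminus G$. We must decide when $x^{\boldsymbol\alpha}\in I_\Delta^{(t)}R_{F\cup G}$.

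\textbf{Step 1: localize each primary component.} For a facet $H$ of $\Delta$, the ideal $P_{[n]\setminus H}^{\,t}R_{F\cup G}$ is the unit ideal as soon as some $x_i$ with $i\in F\cup G$ lies in $P_{[n]\setminus H}$, that is, whenever $F\cup G\not\subseteq H$. Otherwise none of the inverted variables occurs in $P_{[n]\setminus H}$. Hence
\[
I_\Delta^{(t)}R_{F\cup G}=\bigcap_{H\in\mathcal F(\Delta),\ F\cup G\subseteq H}P_{[n]\setminus H}^{\,t}R_{F\cup G},
\]
with the empty intersection being the whole ring.

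\textbf{Step 2: test the Laurent monomial $x^{\boldsymbol\alpha}$ against each component.} Suppose $F\cup G\subseteq H$. Then every $i\notin H$ satisfies $i\notin G$, so $\alpha_i\ge0$. Since $P_{[n]\setminus H}^{\,t}R_{F\cup G}$ is a monomial ideal in which only the variables indexed by $H$ may be inverted, $x^{\boldsymbol\alpha}$ belongs to it if and only if $\sum_{i\notin H}\alpha_i\ge t$. (A monomial lies in a localized monomial ideal if and only if it does after multiplying by a suitable monomial in the inverted variables, and those variables do not affect the degree in the variables $x_i$, $i\notin H$.) Consequently, $F\in\Delta_{\boldsymbol\alpha}(I_\Delta^{(t)})$ if and only if there is a facet $H\supseteq F\cup G$ of $\Delta$ with $\sum_{i\notin H}\alpha_i\le t-1$.

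\textbf{Step 3: translate to the link.} Since $G\in\Delta$, the facets of $\lk_\Delta(G)$ are exactly the sets $H\setminus G$ with $H\in\mathcal F(\Delta)$ and $H\supseteq G$. This correspondence is bijective, and for $F'=H\setminus G$ we have $i\notin F'\cup G$ if and only if $i\notin H$. By Step 2, $\Delta_{\boldsymbol\alpha}(I_\Delta^{(t)})$ is therefore the complex generated by the sets $F'\in\mathcal F(\lk_\Delta(G))$ with $\sum_{i\notin F'\cup G}\alpha_i\le t-1$. These generators are pairwise incomparable, being distinct facets of the link, so they are precisely the facets, which is the claim.

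The only point needing care is the membership criterion in Step 2 for a Laurent monomial in a localized power of a prime. I expect this to be routine, and the sign observation $\alpha_i\ge0$ for $i\notin H$, which uses $G\subseteq H$, is exactly what makes it work.
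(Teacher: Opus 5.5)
The paper does not prove this lemma. It is quoted from \cite[Lemma~1.3]{HT2} and used as a black box (for instance in Lemma~\ref{lem:localized-ideal}), so there is no in-paper argument to match against.

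Your argument is correct and complete. Localization at monomials commutes with the finite intersection $\bigcap_{H}P_{[n]\setminus H}^{\,t}$. A component becomes the unit ideal exactly when $F\cup G_{\boldsymbol\alpha}\not\subseteq H$. When $F\cup G_{\boldsymbol\alpha}\subseteq H$, every inverted variable is indexed by $H$. A witness $u\,x^{\boldsymbol\alpha}\in P_{[n]\setminus H}^{\,t}$, with $u$ a monomial in those variables, then shows that membership is decided by $\sum_{i\notin H}\alpha_i\ge t$. The nonnegativity of these $\alpha_i$ is automatic from $G_{\boldsymbol\alpha}\subseteq H$. It is what makes $\prod_{i\notin H}x_i^{\alpha_i}$ an honest element of $P_{[n]\setminus H}^{\,t}$ in the converse direction. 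The facet correspondence in Step 3 is the same one the paper uses in Lemma~\ref{lem:localization}. Your antichain remark correctly justifies passing from ``generated by'' to ``facets''.

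For comparison, there is a shorter route given Lemma~\ref{MTr}. Let $\boldsymbol\alpha^+$ denote $\boldsymbol\alpha$ with its negative entries replaced by $0$. Then $x^{\boldsymbol\alpha}$ and $x^{\boldsymbol\alpha^+}$ differ by a unit of $R_{F\cup G_{\boldsymbol\alpha}}$, so for any monomial ideal $I$ one has $\Delta_{\boldsymbol\alpha}(I)=\lk_{\Delta_{\boldsymbol\alpha^+}(I)}(G_{\boldsymbol\alpha})$. Apply Lemma~\ref{MTr} to $\boldsymbol\alpha^+$, and take the link of $G_{\boldsymbol\alpha}$ in the complex generated by the facets $H$ with $\sum_{i\notin H}\alpha^+_i\le t-1$. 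This yields the statement, because $\sum_{i\notin H}\alpha^+_i=\sum_{i\notin H}\alpha_i$ whenever $G_{\boldsymbol\alpha}\subseteq H$. That route isolates the only new phenomenon, namely that negative coordinates amount to passing to a link, and its reduction step holds for arbitrary monomial ideals. Your route is self-contained and reproves Lemma~\ref{MTr} as the special case $G_{\boldsymbol\alpha}=\emptyset$.
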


The following relation between links and monomial localization will also be useful.

\begin{lemma}\label{lem:localization}
Let \(\Delta\) be a simplicial complex on \([n]\), let \(\sigma\subseteq[n]\), and let
\[
R_\sigma=R[x_i^{-1}:i\in\sigma],
\qquad
R'=k[x_i:i\notin\sigma].
\]
Then, 
\[
I_\Delta R_\sigma\cap R'
=
\begin{cases}
I_{\lk_\Delta(\sigma)}, & \text{if }\sigma\in\Delta,\\
R', & \text{if }\sigma\notin\Delta.
\end{cases}
\]
\end{lemma}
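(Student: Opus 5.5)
The plan is to work directly with the generators of \(I_\Delta\). Since both sides of the claimed equality are monomial ideals of \(R'\), it suffices to compare monomials. Recall that \(I_\Delta\) is generated by the monomials \(x_F\) with \(F\notin\Delta\).

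First I will describe \(I_\Delta R_\sigma\cap R'\) explicitly. The localization \(I_\Delta R_\sigma\) is generated by the images of the generators \(x_F\). In \(R_\sigma\) the variables \(x_i\), \(i\in\sigma\), are units, so \(x_F\) and \(x_{F\setminus\sigma}\) generate the same ideal. Hence \(I_\Delta R_\sigma\) is generated by the monomials \(x_{F\setminus\sigma}\) with \(F\notin\Delta\).

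A monomial \(m\in R'\) lies in \(I_\Delta R_\sigma\) if and only if some \(x_{F\setminus\sigma}\) divides \(m\cdot u\) for a unit monomial \(u\) in the \(x_i\), \(i\in\sigma\). Since \(m\) and \(x_{F\setminus\sigma}\) involve no variables from \(\sigma\), this is equivalent to \(x_{F\setminus\sigma}\mid m\). This standard monomial-localization step is the only place where care is needed. Consequently
\[
I_\Delta R_\sigma\cap R'=\bigl(x_{F\setminus\sigma}:F\notin\Delta\bigr)R'.
\]
Replacing \(F\) by \(F\cup\sigma\) does not change \(F\setminus\sigma\) and keeps \(F\notin\Delta\), since \(\Delta\) is closed under subsets. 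So we may restrict to sets \(H=F\setminus\sigma\subseteq[n]\setminus\sigma\) with \(H\cup\sigma\notin\Delta\):
\[
I_\Delta R_\sigma\cap R'=\bigl(x_H:\ H\subseteq[n]\setminus\sigma,\ H\cup\sigma\notin\Delta\bigr)R'.
\]

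Now I split into the two cases of the lemma.

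\emph{Case \(\sigma\notin\Delta\).} Taking \(H=\emptyset\) gives the generator \(x_\emptyset=1\), so the ideal is all of \(R'\).

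\emph{Case \(\sigma\in\Delta\).} Here \(\lk_\Delta(\sigma)\) is a complex on the vertex set \([n]\setminus\sigma\), and its Stanley--Reisner ideal in \(R'\) is generated by the \(x_H\) with \(H\subseteq[n]\setminus\sigma\) and \(H\notin\lk_\Delta(\sigma)\). For such \(H\), the condition \(H\cap\sigma=\emptyset\) holds automatically. Therefore, by the definition of the link, \(H\notin\lk_\Delta(\sigma)\) is equivalent to \(H\cup\sigma\notin\Delta\). The two generating sets coincide, which gives \(I_{\lk_\Delta(\sigma)}\).

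The argument is essentially routine. The only point needing justification is that membership of a monomial of \(R'\) in the localized ideal reduces to divisibility by some \(x_{F\setminus\sigma}\). This holds because a monomial ideal's localization at monomials is again monomial, with generators obtained by setting the inverted variables to \(1\).
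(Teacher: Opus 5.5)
Your proof is correct, but it takes a different route from the paper's. You work on the nonface side. Inverting \(x_i\), \(i\in\sigma\), replaces each generator \(x_F\) by \(x_{F\setminus\sigma}\). Monomial membership in \(I_\Delta R_\sigma\cap R'\) then reduces to divisibility by some \(x_{F\setminus\sigma}\). Finally, the face-based definition of the link, \(H\in\lk_\Delta(\sigma)\iff H\cup\sigma\in\Delta\) for \(H\cap\sigma=\emptyset\), matches the two generating sets directly.

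The paper works on the facet side. It localizes the primary decomposition \(I_\Delta=\bigcap_{F\in\mathcal F(\Delta)}P_{[n]\setminus F}\) and observes that every component with \(F\not\supseteq\sigma\) becomes the unit ideal. It then identifies the surviving intersection with \(I_{\lk_\Delta(\sigma)}\) via \(\mathcal F(\lk_\Delta(\sigma))=\{F\setminus\sigma:F\in\mathcal F(\Delta),\ \sigma\subseteq F\}\).

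Your argument is more elementary. It needs neither the facet description of the link nor the compatibility of localization and contraction with finite intersections; its only algebraic input is that monomial localization amounts to setting the inverted variables equal to \(1\). The paper's argument has the advantage of producing the localized ideal directly as an intersection of primes indexed by the facets containing \(\sigma\). That is exactly the form used next in Lemma~\ref{lem:localized-ideal} to read off \(\Min(J)\) from the nonedges and triangles of \(G'\). With your version, that translation from generators to primes would still have to be made there.
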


\begin{proof}
Using the irredundant primary decomposition of \(I_\Delta\), we have
\[
I_\Delta
=
\bigcap_{F\in\mathcal F(\Delta)}P_{[n]\setminus F}.
\]
If \(\sigma\notin\Delta\), then no facet of \(\Delta\) contains \(\sigma\). Hence every component becomes the unit ideal after localization, and \(I_\Delta R_\sigma\cap R'=R'.
\)
\noindent
Suppose that \(\sigma\in\Delta\). Then,
\[
I_\Delta R_\sigma\cap R'
=
\bigcap_{\substack{F\in\mathcal F(\Delta)\\ \sigma\subseteq F}}
(x_i:i\in[n]\setminus F).
\]
Moreover, $\mathcal F\bigl(\lk_\Delta(\sigma)\bigr)
=
\left\{
F\setminus\sigma:
F\in\mathcal F(\Delta),\ \sigma\subseteq F
\right\},$
and $([n]\setminus\sigma)\setminus(F\setminus\sigma)
=
[n]\setminus F.$
Hence
\[
I_{\lk_\Delta(\sigma)}
=
\bigcap_{\substack{F\in\mathcal F(\Delta)\\ \sigma\subseteq F}}
P_{[n]\setminus F}.
\]
The lemma is proved. 

\end{proof}

\subsection{Symbolic polyhedra and general bounds}

In this section, we recall the bounds for regularity of symbolic powers that will be used in the next section. 

For a vector $\alb = (\alpha_1,\ldots,\alpha_n)\in\R^n$, we set $|\alb| : = \alpha_1+\cdots+\alpha_n$.
%and for a nonempty bounded closed subset $S$ of $\R^n$ we set 
%$$\delta(S) : = \max\{|\alb| \mid \alb \in S\}.$$

Let \(I\subseteq R\) be a nonzero proper square-free monomial ideal with \(\Min(I)=\{P_{F_1},\ldots,P_{F_s}\}.\) The symbolic polyhedron of \(I\) is defined by 
\[
\operatorname{SP}(I)
=
\left\{
\boldsymbol x\in\mathbb R_{\ge0}^n:
\sum_{i\in F_j}x_i\ge1
\text{ for }j=1,\ldots,s
\right\}.
\]
By \cite[Equation~(4)]{HT}, the slope of the regularity of symbolic powers is determined by
\[
\delta(I)
=
\lim_{t\to\infty}\frac{\reg I^{(t)}}t
=
\max\left\{
|\boldsymbol v|:
\boldsymbol v\text{ is a vertex of }\operatorname{SP}(I)
\right\}.
\]

If $\Delta$ is a simplicial complex on the vertex set $[n]$, and \(I=I_\Delta\), by \cite[Theorem~2.3]{HT}, we have the upper bound for $\reg(I^{(t)})$ as 
\begin{equation}\label{eq:general-symbolic-bound}
\reg(I^{(t)})
\le
\delta(I)(t-1)+b(I)
\qquad \text{ for all }  t \ge 1,
\end{equation}
where $b(I)
=
\max\left\{
\reg(I_\Gamma):
\Gamma\text{ is a subcomplex of }\Delta,\ 
\mathcal F(\Gamma)\subseteq\mathcal F(\Delta)
\right\}.$

For a homogeneous ideal \(I\), let \(d(I)\) denote the largest degree of a minimal homogeneous generator of \(I\). Recall that $ d(I) \le \reg(I). $

For a square-free monomial ideal, by \cite[Remark~2.8]{HT},  we have the lower bound
\begin{equation}\label{eq:general-symbolic-lower}
\reg(I^{(t)}) \ge d(I)t \qquad \text{ for all } t\ge1.
\end{equation}

%The following estimate for the multigraded components of local cohomology will also be used.

\begin{lemma}[{\cite[Theorem~2.2]{HT}}]\label{HT}
Let \(I\subseteq R\) be a square-free monomial ideal. For every \(i\ge0\) and \(t\ge1\),
\[
a_i(R/I^{(t)})
\le
\delta(I)(t-1).
\]
In particular, if $H^i_{\mathfrak m}(R/I^{(t)})_{\boldsymbol\alpha}\ne 0,$
then $ |\boldsymbol\alpha| \le a_i(R/I^{(t)}) \le \delta(I)(t-1).$
\end{lemma}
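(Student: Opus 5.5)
The plan is to compute local cohomology through degree complexes via Takayama's formula (Lemma~\ref{TA}) and to turn the non-vanishing of a graded piece into a linear inequality on the degree vector, which is then compared with the vertices of \(\operatorname{SP}(I)\). Write \(I=I_\Delta\) and fix \(\boldsymbol\alpha\in\mathbb Z^n\) with \(H^i_{\mathfrak m}(R/I^{(t)})_{\boldsymbol\alpha}\neq0\). It suffices to show \(|\boldsymbol\alpha|\le\delta(I)(t-1)\). The bound on \(a_i\) then follows, since \(a_i\) is attained in some multidegree \(\boldsymbol\alpha\) of total degree \(a_i\).

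\emph{Step 1: reduction to \(G_{\boldsymbol\alpha}=\emptyset\).} By Lemma~\ref{TA}, \(\Delta_{\boldsymbol\alpha}(I^{(t)})\) has nonzero reduced homology, so it is not void. By the remark preceding Lemma~\ref{HoaTrLem}, this forces \(G_{\boldsymbol\alpha}\in\Delta\). Lemma~\ref{HoaTrLem} then identifies \(\Delta_{\boldsymbol\alpha}(I_\Delta^{(t)})\) with \(\Delta_{\boldsymbol\beta}(I_{\lk_\Delta(G_{\boldsymbol\alpha})}^{(t)})\), where \(\boldsymbol\beta\) is the restriction of \(\boldsymbol\alpha\) to \([n]\setminus G_{\boldsymbol\alpha}\). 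We have \(|\boldsymbol\alpha|\le|\boldsymbol\beta|\) because the deleted coordinates are negative. By Lemma~\ref{lem:localization}, the minimal primes of \(I_{\lk}\) are exactly the \(P_{[n]\setminus F}\) with \(G_{\boldsymbol\alpha}\subseteq F\). Hence \(\operatorname{SP}(I_{\lk})\) is cut out by a subset of the inequalities defining \(\operatorname{SP}(I)\), restricted to the coordinate face \(\{x_i=0:i\in G_{\boldsymbol\alpha}\}\). I will check that its vertices extend by zero to vertices of \(\operatorname{SP}(I)\), giving \(\delta(I_{\lk})\le\delta(I)\). So we may assume \(\boldsymbol\alpha\in\mathbb N^n\).

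\emph{Step 2: combinatorial consequence of nonvanishing.} For \(\boldsymbol\alpha\in\mathbb N^n\), Lemma~\ref{MTr} says the facets of \(\Delta_{\boldsymbol\alpha}\) are the facets \(F\) of \(\Delta\) with \(\sum_{i\notin F}\alpha_i\le t-1\). Suppose some vertex \(j\) lies in every facet of \(\Delta_{\boldsymbol\alpha}\). Then \(\Delta_{\boldsymbol\alpha}\) is a cone with apex \(j\), hence acyclic, which is a contradiction. Note that the case \(\Delta_{\boldsymbol\alpha}=\{\emptyset\}\) has no vertex, so for it this argument gives nothing; there \(\emptyset\) is not a facet of \(\Delta\) when \(\Delta\neq\{\emptyset\}\), so that case cannot occur. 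Therefore, for every \(j\in[n]\), there is a facet \(F_j\in\mathcal F(\Delta)\) with \(j\in C_j:=[n]\setminus F_j\) and \(\sum_{i\in C_j}\alpha_i\le t-1\). Each \(C_j\) is the support of a minimal prime.

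\emph{Step 3: the polyhedral inequality (main obstacle).} Put \(\boldsymbol y=\boldsymbol\alpha/(t-1)\) for \(t\ge2\). For \(t=1\), Step 2 forces \(\alpha_j=0\) for all \(j\), and we are done. The vector \(\boldsymbol y\ge0\) satisfies \(\boldsymbol y(C_j)\le1\) for sets \(C_j\) covering every coordinate, and we must show \(|\boldsymbol y|\le\delta(I)\). I would first try the following argument. Take \(\boldsymbol z\in\operatorname{SP}(I)\) with \(\boldsymbol z\ge\boldsymbol y\) and \(|\boldsymbol z|\) minimal; such a \(\boldsymbol z\) exists, for instance \(\boldsymbol y+\mathbf 1\) lies in \(\operatorname{SP}(I)\). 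Write \(\boldsymbol z=\boldsymbol c+\boldsymbol r\), with \(\boldsymbol c\) a convex combination of vertices and \(\boldsymbol r\ge0\).

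The aim is to use the tight sets \(C_j\) to show \(\boldsymbol r=0\). Suppose \(r_j>0\). If \(z_j>y_j\), then \(z_j\) can be lowered unless some constraint through \(j\) is tight; minimality of \(|\boldsymbol z|\) should exclude this, using that \(\boldsymbol z-\boldsymbol r\) still lies in \(\operatorname{SP}(I)\). If instead \(z_j=y_j>0\), then \(\boldsymbol z(C_j)\ge1\ge\boldsymbol y(C_j)\), which pins \(\boldsymbol z\) down on \(C_j\) and forces \(c_j\ge y_j\). Making this exchange argument rigorous, possibly by passing to LP duality for \(\max\{|\boldsymbol x|\}\) over the bounded face structure, is the step I expect to be delicate. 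Once it is done, \(|\boldsymbol y|\le|\boldsymbol c|\le\delta(I)\), hence \(|\boldsymbol\alpha|\le\delta(I)(t-1)\). This gives both the bound on \(a_i(R/I^{(t)})\) and the stated consequence for nonzero graded pieces.
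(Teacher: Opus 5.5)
This statement is only quoted in the paper (it is \cite[Theorem~2.2]{HT}), so there is no internal proof to compare with. Judged on its own, your outline has a genuine gap at Step~3. Moreover, the gap cannot be closed as you posed it, because the inequality you reduce to is false.

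Write $\boldsymbol y(C)=\sum_{i\in C}y_i$. Step~3 asks for $|\boldsymbol y|\le\delta(I)$ whenever $\boldsymbol y\ge0$ and every $j$ lies in some minimal-prime support $C$ with $\boldsymbol y(C)\le1$. Here is a counterexample. Let $H$ be the double star with edges $ab,a\ell_1,a\ell_2,bm_1,bm_2$, and let $I=\bigcap_{e\in E(H)}P_e$.
\begin{itemize}
\item Since $H$ is bipartite, the vertices of $\operatorname{SP}(I)$ are the indicator vectors of the minimal vertex covers $\{a,b\}$, $\{a,m_1,m_2\}$, $\{b,\ell_1,\ell_2\}$. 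Hence $\delta(I)=3$.
\item Take $\boldsymbol y$ with $y_a=y_b=0$ and $y=1$ on the four leaves. Then $\boldsymbol y(e)\le1$ for every edge, but $|\boldsymbol y|=4$.
\item Your exchange argument fails here as well. The minimal $\boldsymbol z\in\operatorname{SP}(I)$ with $\boldsymbol z\ge\boldsymbol y$ is $\boldsymbol y+\mathbf e_a$, of weight $5$, so it lies outside the convex hull of the vertices.
\end{itemize}
This does not contradict the lemma. For $\boldsymbol\alpha=(t-1)\boldsymbol y$ one gets $\Delta_{\boldsymbol\alpha}(I^{(t)})=\Delta(I)$, which is not a cone but is acyclic. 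Indeed, in the nerve of its facets $V(H)\setminus e$, the facet $V(H)\setminus\{a,b\}$ is a cone point: any family of edges covering all four leaves also covers $a$ and $b$.

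So Step~2 throws away exactly what the proof needs. You must use the nonvanishing of $\widetilde H_{i-1}(\Delta_{\boldsymbol\alpha}(I^{(t)});k)$ beyond the non-cone property. You must also use the reverse inequalities $\boldsymbol\alpha(C)\ge t$ for the minimal primes whose facets are absent from $\Delta_{\boldsymbol\alpha}(I^{(t)})$. In the example, for $\boldsymbol\alpha\in\mathbb N^6$, the degree complexes with nonzero homology are exactly those with $\boldsymbol\alpha(e)\le t-1$ on the pendant edges and $\alpha_a+\alpha_b\ge t$. It is this reverse inequality that yields $|\boldsymbol\alpha|\le4(t-1)-t<3(t-1)$.

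Separately, the justification in Step~1 is wrong. Vertices of $\operatorname{SP}(I_{\lk_\Delta(\sigma)})$ need not extend by zero to points of $\operatorname{SP}(I)$, because the inequalities for minimal primes meeting $\sigma$ can fail. For example, take $I=(x_1,x_2)\cap(x_2,x_3)$ and $\sigma=\{1\}$. The vertex $(x_2,x_3)=(0,1)$ extends to $(0,0,1)$, which violates $x_1+x_2\ge1$.

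The inequality $\delta(I_{\lk_\Delta(\sigma)})\le\delta(I)$ is nevertheless true, with a different extension. Given a vertex $\boldsymbol w$ of $\operatorname{SP}(I_{\lk_\Delta(\sigma)})$, choose a vertex $\boldsymbol u$ of the nonempty pointed polyhedron $\{\boldsymbol u\in\mathbb R^{\sigma}_{\ge0}:\boldsymbol u(C\cap\sigma)\ge1-\boldsymbol w(C\setminus\sigma)\text{ for every support } C \text{ meeting }\sigma\}$. Then $(\boldsymbol w,\boldsymbol u)$ is a vertex of $\operatorname{SP}(I)$ of weight at least $|\boldsymbol w|$. To see this, note that writing $(\boldsymbol w,\boldsymbol u)$ as a midpoint in $\operatorname{SP}(I)$ restricts to such a decomposition of $\boldsymbol w$ in $\operatorname{SP}(I_{\lk_\Delta(\sigma)})$, which must be trivial. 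It then gives a decomposition of $\boldsymbol u$ in the polyhedron above, which is also trivial.
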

%%%%%%%%%%%%%%%%%%%%%%%%%%%%%%%%%%%%%5

\section{Regularity of symbolic powers of complementary edge ideals}\label{sec:symbolic-regularity}
In this section, we study the regularity of symbolic powers of complementary edge ideals. Let $G$ be a simple graph without isolated vertices. By Lemma~\ref{lm:primary-decomposition}, the irredundant primary decomposition of $I_c(G)$ is
\begin{equation}\label{eq:primary-section3}
I_c(G)=\left(\bigcap_{e\in E(G^c)}P_e\right)\cap\left(\bigcap_{T\in\mathcal T(G)}P_T\right).
\end{equation}
Therefore, if $\Delta=\Delta(I_c(G))$, then $\mathcal F(\Delta)=\{[n]\setminus e:e\in E(G^c)\}\cup\{[n]\setminus T:T\in\mathcal T(G)\}.$

%%%%%%%%%%%%%%%%%%%%%%%%%%%%%%%%%%%%%%%5%
\subsection{Bounds for the regularity of symbolic powers}
\begin{proposition}\label{prop:delta-no-isolates}
Let $G$ be a simple graph on $[n]$, where $n\ge3$. If $G$ has no isolated vertices, then
$\delta(I_c(G))=n-2.$
\end{proposition}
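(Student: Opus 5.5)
We need $\delta(I_c(G)) = \max\{|\boldsymbol v| : \boldsymbol v \text{ a vertex of } \operatorname{SP}(I_c(G))\}$. By Lemma~\ref{lm:primary-decomposition}, $\operatorname{SP}(I_c(G))$ is the set of $\boldsymbol x\in\mathbb R^n_{\ge0}$ with $x_i+x_j\ge1$ for every non-edge $\{i,j\}$ of $G$, and $x_i+x_j+x_k\ge1$ for every triangle $\{i,j,k\}$ of $G$. The plan is to prove the two inequalities $\delta\ge n-2$ and $\delta\le n-2$ separately.

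\emph{Lower bound.} Take an edge $\{a,b\}\in E(G)$ and let $\boldsymbol v=\sum_{i\ne a,b}\mathbf e_i$, so $|\boldsymbol v|=n-2$. We check that $\boldsymbol v\in\operatorname{SP}$. Every non-edge $\{i,j\}$ differs from $\{a,b\}$, so it contains a vertex outside $\{a,b\}$ and $v_i+v_j\ge1$. Every triangle has three vertices, so it meets $[n]\setminus\{a,b\}$. Next we check that $\boldsymbol v$ is a vertex, i.e.\ that the tight constraints have rank $n$:
\begin{itemize}
\item $x_a\ge0$ and $x_b\ge0$ are tight;
\item for each $i\ne a,b$, we need a tight constraint involving $x_i$ that, together with the already determined coordinates, forces $x_i$. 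The option is a non-edge $\{i,a\}$ or $\{i,b\}$, or a triangle $\{i,a,b\}$, since each of these is tight with sum $1$.
\end{itemize}
If $i$ is adjacent to both $a$ and $b$, then $\{i,a,b\}$ is a triangle; otherwise $\{i,a\}$ or $\{i,b\}$ is a non-edge. So in every case $x_i$ is determined, the vertex is isolated, and $\delta\ge n-2$. Since the maximum of the linear functional $|\cdot|$ over the polyhedron is attained at a vertex whenever it is bounded, it in fact suffices to show $\boldsymbol v\in\operatorname{SP}$ and prove the upper bound on all vertices.

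\emph{Upper bound.} This is the main step: show $|\boldsymbol v|\le n-2$ for every vertex $\boldsymbol v$. First, vertices satisfy $v_i\le1$. If $v_i>1$, then decreasing $x_i$ keeps every constraint valid, so $\boldsymbol v$ is the midpoint of two feasible points and cannot be a vertex. In general each coordinate is at most the largest value any constraint could require, so it suffices to show that every $\boldsymbol x\in\operatorname{SP}$ that is minimal in the componentwise order has $|\boldsymbol x|\le n-2$. Vertices are minimal, since any feasible $\boldsymbol y\le\boldsymbol v$ with $\boldsymbol y\neq\boldsymbol v$ would let us write $\boldsymbol v$ as a midpoint.

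Assume for contradiction that $|\boldsymbol x|>n-2$ with all $x_i\le1$. Let $S=\{i: x_i<1\}$; its deficits $d_i=1-x_i$ satisfy $\sum d_i<2$. By minimality, each $i$ with $x_i>0$ lies in a tight constraint. The key argument is this: a tight non-edge $\{i,j\}$ has $d_i+d_j=1$, and a tight triangle has $d_i+d_j+d_k=2$, which already contradicts $\sum d<2$. So only non-edge constraints are tight, and each positive coordinate lies in a tight non-edge, whose two endpoints contribute deficit $1$. Two disjoint tight non-edges would give total deficit $\ge2$, so all tight non-edges pairwise intersect. Hence they form a star centered at some vertex $c$, or a triangle in $G^c$.
\begin{itemize}
\item \emph{Triangle $\{p,q,r\}$ in $G^c$.} Tightness gives $d_p+d_q=d_q+d_r=d_p+d_r=1$, so each deficit is $\tfrac12$ and the total is $\tfrac32$. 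The remaining vertices then have deficit $<\tfrac12$. Such a vertex with positive coordinate needs a tight non-edge inside the triangle's support, which is impossible since it lies outside $\{p,q,r\}$. So every remaining vertex $i$ has $x_i=0$, i.e.\ deficit $1$. Hence $n=3$ and $G$ is empty, contradicting that $G$ has no isolated vertices.
\item \emph{Star at $c$.} Every vertex $i\ne c$ with $x_i>0$ is a leaf of the star, with $d_i=1-d_c$. Vertices with $x_i=0$ have deficit $1$, and together with $\sum d<2$ this limits them to at most one, and then $d_c$ is forced. We handle the small cases by hand: most vertices are non-neighbors of $c$ with $x_i=x_c'$. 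Using that $c$ has a neighbor $u$ in $G$ (no isolated vertices), the vertex $u$ must have either $x_u=0$ or a tight non-edge through $c$, which is impossible since $\{u,c\}$ is an edge. Bookkeeping then yields a contradiction.
\end{itemize}

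The main obstacle is this last case analysis. If it becomes messy, I would fall back on LP duality: exhibit a fractional cover of the constraints with total weight $n-2$ dominating the all-ones objective.
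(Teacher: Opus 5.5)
Your proof is correct, and it takes a different route from the paper on both halves.

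\textbf{Upper bound.} The paper fixes a vertex $v$ and forms the graph $H\subseteq G^c$ of tight non-edges. It then perturbs $v$ by $\pm\varepsilon\boldsymbol u$, with $\boldsymbol u=\pm1$ on the two color classes of a bipartite component, to show that every bipartite component of $H$ contains a zero coordinate. Deficit counting then rules out the remaining configurations.

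You use only that vertices are componentwise minimal, so every positive coordinate lies in a tight constraint. In place of the perturbation claim you observe that two disjoint tight non-edges already cost deficit $2$. This forces the tight non-edges to form a star or a triangle. The argument is more elementary.

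Your sketchy star case closes cleanly. The $G$-neighbor $u$ of the center $c$ lies in no tight non-edge, so $x_u=0$, which uses up the only zero allowed. Then $d_i=1-d_c$ for every $i\notin\{c,u\}$, and
\[
\sum_i d_i=d_c+1+(n-2)(1-d_c)=2+(n-3)(1-d_c)\ge 2,
\]
a contradiction. The paper states its bipartite-component claim once and reuses it in Lemma~\ref{lem:universal-local}. Your minimality argument handles that lemma even more directly, since there $\sum_i d_i<1$ forbids any tight non-edge at all.

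\textbf{Lower bound.} The paper does not exhibit a vertex. It combines $\reg(I^{(t)})\ge d(I)t$ with the limit formula $\delta(I)=\lim_t\reg(I^{(t)})/t$. Your vertex $\sum_{i\ne a,b}\mathbf e_i$, together with its rank-$n$ check of the tight constraints, is self-contained and purely polyhedral.

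\textbf{One correction.} Delete the aside that it ``suffices to show $\boldsymbol v\in\operatorname{SP}$'' because a bounded linear functional attains its maximum at a vertex. Delete the LP-duality fallback as well. Both treat $\delta$ as an LP optimum, which it is not.
\begin{itemize}
\item $\operatorname{SP}(I)$ is closed upward, so $|\cdot|$ is unbounded on it; already $(1,\ldots,1)\in\operatorname{SP}(I)$ has weight $n>n-2$.
\item A feasible point of weight $n-2$ therefore says nothing about vertices.
\item Duality would only control $\min|\boldsymbol x|$, not the maximum over vertices.
\end{itemize}
Your explicit rank verification is what actually proves $\delta\ge n-2$, so nothing in the proof is lost.
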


\begin{proof}
Let $I=I_c(G)$. By Equation \eqref{eq:primary-section3}, we have
\[
\operatorname{SP}(I)=\left\{x\in\mathbb R_{\ge0}^n:\sum_{i\in e}x_i\ge1 \text{ with } e\in E(G^c),\quad \sum_{i\in T}x_i\ge1 \text{ with } T\in\mathcal T(G) \right\}.
\]
Let $v$ be a vertex of $\operatorname{SP}(I)$. First note that $v_i\le1$ for every $i$. Indeed, if $v_i>1$, then every defining inequality involving $x_i$ is strict at $v$. Hence, for sufficiently small $\varepsilon>0$, both $v+\varepsilon\mathbf e_i$ and $v-\varepsilon\mathbf e_i$ belong to $\operatorname{SP}(I)$, contradicting the fact that $v$ is a vertex.

Suppose that $|v|>n-2$, and set $d_i=1-v_i$. Then $0\le d_i\le1$ and
\begin{equation}\label{eq:defect-less-two}
\sum_{i=1}^n d_i=n-|v|<2.
\end{equation}
If an edge inequality $\sum_{i\in e}x_i\ge1$ holds with equality at $v$, then $v_i+v_j=1$ and hence $d_i+d_j=1$. Similarly, a triangle equality $\sum_{i\in T}x_i =1$ would imply $\sum_{i\in T}d_i=2$, while $v_j=0$ would give $d_j=1$. It follows from Equation \eqref{eq:defect-less-two} that no triangle inequality is an equality at $v$ and that $v_j=0$ for at most one index $j$.

Let $H$ be the spanning subgraph of $G^c$ consisting of the edges $\{i,j\}$ for which $v_i+v_j=1$. 

\medskip
\noindent\textbf{Claim.}
Every bipartite connected component of \(H\) contains an index
\(j\) such that \(v_j=0\).

\smallskip
 Indeed, let \(C=A\sqcup B\) be a bipartite connected component of \(H\), and
define \(\boldsymbol u\in\mathbb R^n\) by
\[
u_i=
\begin{cases}
1, & i\in A,\\
-1, & i\in B,\\
0, & i\notin C.
\end{cases}
\]
For every edge \(\{i,j\}\) of \(C\), we have \(u_i+u_j=0\). Hence,
\[
(v_i\pm\varepsilon u_i)+(v_j\pm\varepsilon u_j)
=v_i+v_j=1,
\]
so all edge equalities at \(v\) remain unchanged under the
perturbations \(v\pm\varepsilon\boldsymbol u\).

There is no triangle equality at \(v\), and there is at most one
index \(j_0\) such that \(v_{j_0}=0\). If no such index exists, or if
\(j_0\notin C\), then the nonnegativity equalities at \(v\) are also
preserved. Since every other defining inequality is strict
at \(v\), for sufficiently small \(\varepsilon>0\) both \(v+\varepsilon\boldsymbol u \) and \(v-\varepsilon\boldsymbol u \) belong to \(\operatorname{SP}(I)\), contradicting the fact that \(v\) is a vertex. This proves the claim.

\medskip

By the claim, if \(v\) has no zero coordinate, then \(H\) has no
bipartite connected component. If \(v_{j_0}=0\) for some index
\(j_0\), then \(j_0\) is unique by Equation \eqref{eq:defect-less-two}, and
every bipartite connected component of \(H\) must contain \(j_0\).
Since distinct connected components are disjoint, \(H\) has at most
one bipartite connected component.

If every component of $H$ is non-bipartite, then the equations $d_i+d_j=1$ force $d_i=1/2$ throughout each component. Every such component has at least three vertices, so Equation \eqref{eq:defect-less-two} forces $H$ to be connected and $n=3$. Hence, $H=K_3\subseteq G^c$, which would make every vertex of $G$ isolated, a contradiction.

Now, we consider the case in which \(H\) has a bipartite
connected component \(C\). By the claim, there is an index \(j_0\in C\)
such that \(v_{j_0}=0\). Any other component would be non-bipartite and would contribute at least $3/2$ to $\sum_i d_i$, in addition to $d_{j_0}=1$, contradicting Equation \eqref{eq:defect-less-two}. Hence, $H$ is connected and bipartite. Let $[n]=A\sqcup B$ be its bipartition with $j_0\in A$. The edge equations imply that  $d_i=1$ for $i\in A$ and $d_i=0$ for $i\in B$. Thus, $|A|<2$, so $A=\{j_0\}$. Since $H$ is connected, $j_0$ is adjacent in $G^c$ to every other vertex and is therefore isolated in $G$, again a contradiction. 

Therefore, every vertex $v$ of $\operatorname{SP}(I)$ satisfies $|v|\le n-2$, and hence $\delta(I)\le n-2$. Finally, $I_c(G)$ is generated in degree $n-2$. By Equation \eqref{eq:general-symbolic-lower}, $\reg(I^{(t)}) \ge (n-2)t$ for all $t$, and take to the limit gives $\delta(I)\ge n-2$.
\end{proof}

\begin{lemma}\label{lem:isolated-reduction}
Let $V,W\subseteq[n]$ such that $V \cap W=\emptyset$, let $J\subseteq k[x_i:i\in V]$ be a nonzero proper square-free monomial ideal, and regard $I=x_WJ$ as an ideal of $R$. Then
\[
\Min(I)=\{(x_j):j\in W\}\cup\Min(J), \text{ and } I^{(t)}=x_W^tJ^{(t)}.
\]
Consequently,
\[
\reg(I^{(t)})=t|W|+\reg(J^{(t)}) \qquad \text{ for all } t\ge1.
\]
In particular, if $W$ is the set of isolated vertices of $G$, $H=G_{[n]\setminus W}$, and $|V(H)|\ge3$, then
\begin{equation}\label{eq:isolated-reduction}
I_c(G)=x_WI_c(H),
\qquad
\reg(I_c(G)^{(t)})=t|W|+\reg(I_c(H)^{(t)}).
\end{equation}
\end{lemma}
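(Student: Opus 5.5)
The plan is to compute $\Min(I)$ directly, deduce the symbolic power from it, and then transfer regularity along multiplication by a monomial in disjoint variables.

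\textbf{Minimal primes.} Since $J$ is square-free with $J=\bigcap_{P\in\Min(J)}P$, and each $x_j$ with $j\in W$ is a variable not occurring in $J$, we have
\[
I=x_WJ=\Bigl(\bigcap_{j\in W}(x_j)\Bigr)\cap\Bigl(\bigcap_{P\in\Min(J)}PR\Bigr).
\]
Indeed, for monomial ideals in disjoint sets of variables the product equals the intersection, and $x_W=\bigcap_{j\in W}(x_j)$. This decomposition is irredundant for the following reasons.
\begin{itemize}
\item No $(x_j)$ contains a prime of $J$, because $J$ is proper and nonzero, so each prime of $J$ involves only variables in $V$.
\item No $PR$ with $P\in\Min(J)$ contains $(x_j)$, because $j\notin V$.
\item The primes of $J$ are pairwise incomparable, being minimal.
\end{itemize}
This gives the stated description of $\Min(I)$.

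\textbf{Symbolic power.} By the square-free formula,
\[
I^{(t)}=\Bigl(\bigcap_{j\in W}(x_j^t)\Bigr)\cap\Bigl(\bigcap_{P\in\Min(J)}P^tR\Bigr)=x_W^t\cap J^{(t)}R .
\]
Here $J^{(t)}R$ is again a monomial ideal in the variables of $V$ only, and the $V$-variables are disjoint from $W$. Hence the intersection equals the product, so $I^{(t)}=x_W^tJ^{(t)}$. To justify the intersection-equals-product step, note that a monomial $x^{\mathbf a}$ lies in both ideals exactly when $a_j\ge t$ for all $j\in W$ and its $V$-part lies in $J^{(t)}$.

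\textbf{Regularity.} Multiplication by the monomial $x_W^t$ of degree $t|W|$ is an injective map $R(-t|W|)\to R$, and since $R$ is a domain it gives a graded isomorphism $J^{(t)}R(-t|W|)\cong x_W^tJ^{(t)}$. Therefore
\[
\reg(I^{(t)})=t|W|+\reg_R(J^{(t)}R).
\]
Regularity is unchanged under flat extension by adjoining the variables outside $V$, since the minimal free resolution extends. This yields the formula $\reg(I^{(t)})=t|W|+\reg(J^{(t)})$.

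\textbf{Application to graphs.} Let $W$ be the set of isolated vertices of $G$ and $H=G_{[n]\setminus W}$. Every edge $e$ of $G$ lies in $V=[n]\setminus W$, and
\[
x_{[n]\setminus e}=x_W\cdot x_{V\setminus e}.
\]
Hence $I_c(G)=x_WI_c(H)$. The ideal $I_c(H)$ is a nonzero proper square-free monomial ideal in $k[x_i:i\in V]$: it is nonzero as long as $E(G)\neq\emptyset$ (the standing assumption), and it is proper because $|V|\ge3$ forces generators of degree $|V|-2\ge1$. The general statement then gives \eqref{eq:isolated-reduction}.

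The only potential subtlety is the intersection-equals-product step for ideals in disjoint variables. It is routine via the monomial-membership check given in the symbolic-power paragraph above, so I expect no real obstacle.
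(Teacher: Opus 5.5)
Your proof is correct and follows the paper's argument essentially step for step: you write $I=(x_W)\cap J$ as an irredundant intersection of primes, take $t$-th powers componentwise, use that intersection equals product for monomial ideals in disjoint variables, and shift regularity by $t|W|$. You also make explicit several points the paper leaves implicit: the monomial-membership check, the invariance of regularity under adjoining variables, and why $I_c(H)$ is nonzero and proper (nonzeroness is in fact automatic, since $|V(H)|\ge 3$ already forces $E(G)\neq\emptyset$).
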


\begin{proof}
Since the variables dividing $x_W$ do not occur in $J$,
\[
I=x_WJ=(x_W)\cap J=\left(\bigcap_{j\in W}(x_j)\right)\cap\left(\bigcap_{P\in\Min(J)}P\right),
\]
and this decomposition is irredundant. Hence, $\Min(I)=\{(x_j) :j\in W\}\cup\Min(J)$. Since the two sets of variables are disjoint, the symbolic powers satisfy
\[
I^{(t)}=\left(\bigcap_{j\in W}(x_j)^t\right)\cap J^{(t)}=(x_W^t)\cap J^{(t)}=x_W^tJ^{(t)}.
\]
Multiplication by $x_W^t$ shifts degrees by $t|W|$. Therefore, \(\reg(I^{(t)})=t|W|+\reg(J^{(t)}).\)

Finally, let $V=[n]\setminus W$. Since the vertices in $W$ are isolated, $E(G)=E(H)$ and $x_{[n]\setminus e}=x_Wx_{V(H)\setminus e}$ for $e\in E(H)$. The desired formulas now follow by applying the preceding result with $J=I_c(H)$.
\end{proof}

\begin{corollary}\label{cor:delta-and-lower}
Let $G$ be a simple graph on $[n]$, $n\ge 3$, with $E(G)\neq\emptyset$. Then   $$\delta(I_c(G))=n-2$$
and
\begin{equation}\label{eq:lower-main}
\reg(I_c(G)^{(t)})\ge(n-2)t \qquad \text{for all} \quad t \ge 1.
\end{equation}
\end{corollary}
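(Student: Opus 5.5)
We reduce to the case without isolated vertices by means of Lemma~\ref{lem:isolated-reduction}. Let $W$ be the set of isolated vertices of $G$, let $H=G_{[n]\setminus W}$, and set $m=|V(H)|=n-|W|$. Since $E(G)\neq\emptyset$, we have $m\ge2$. The argument splits according to whether $m\ge3$ or $m=2$.

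\emph{Case $m\ge3$.} Here $H$ has no isolated vertices, so Proposition~\ref{prop:delta-no-isolates} gives $\delta(I_c(H))=m-2$. By \eqref{eq:isolated-reduction}, $\reg(I_c(G)^{(t)})=t|W|+\reg(I_c(H)^{(t)})$. Dividing by $t$ and letting $t\to\infty$ yields
\[
\delta(I_c(G))=|W|+(m-2)=n-2.
\]
For the lower bound, $I_c(G)$ is square-free and generated in degree $n-2$, so \eqref{eq:general-symbolic-lower} gives $\reg(I_c(G)^{(t)})\ge(n-2)t$ directly.

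\emph{Case $m=2$.} Then $G\cong K_2\sqcup (n-2)K_1$, so $I_c(G)=(x_W)$ is the principal ideal generated by the squarefree monomial $x_W$ of degree $n-2$. By Lemma~\ref{lem:isolated-reduction} with $J$ replaced by the unit ideal, or by direct computation since its minimal primes are the $(x_j)$ with $j\in W$, we get $I_c(G)^{(t)}=(x_W^t)$. Therefore $\reg(I_c(G)^{(t)})=(n-2)t$, so the limit is $\delta=n-2$ and \eqref{eq:lower-main} holds with equality.

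Alternatively, $\delta$ can be computed directly from the symbolic polyhedron. The symbolic polyhedron of $x_WJ$ is the product of $\{x_j\ge1: j\in W\}$ with $\operatorname{SP}(J)$. Its vertices are therefore $\mathbf 1_W$ combined with the vertices of $\operatorname{SP}(J)$, which gives $\delta(x_WJ)=|W|+\delta(J)$. This avoids the limit argument but is not needed.

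The only delicate point is that Lemma~\ref{lem:isolated-reduction} assumes $|V(H)|\ge3$ and $J$ proper. This is why the case $m=2$, where $I_c(H)$ is the unit ideal, has to be handled separately; it is elementary. Otherwise the proof is a routine combination of earlier results.
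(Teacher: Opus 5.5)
Your proposal is correct and follows essentially the same route as the paper: reduce to $H=G_{[n]\setminus W}$ via Lemma~\ref{lem:isolated-reduction}, apply Proposition~\ref{prop:delta-no-isolates} when $|V(H)|\ge3$ to get $\delta(I_c(G))=|W|+\delta(I_c(H))=n-2$, handle $H\cong K_2$ directly from $I_c(G)=(x_W)$, and take the lower bound from \eqref{eq:general-symbolic-lower}. Your remark that Lemma~\ref{lem:isolated-reduction} does not literally cover the case where $I_c(H)$ is the unit ideal is exactly why the paper also treats $m=2$ separately.
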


\begin{proof}
The lower bound follows immediately from Equation \eqref{eq:general-symbolic-lower}, since $I_c(G)$ is generated in degree $n-2$. Let $W$ be the set of isolated vertices and let $H=G_{[n]\setminus W}$ and $m=|V(H)|$. If $W=\emptyset$, then the result follows from Proposition \ref{prop:delta-no-isolates}. Suppose that $W\neq\emptyset$. Since $E(G)\neq\emptyset$, necessarily $m\ge2$. If $m\ge3$, then by Proposition~\ref{prop:delta-no-isolates} and Equation \eqref{eq:isolated-reduction}, we have
\[
\delta(I_c(G))=|W|+\delta(I_c(H))=|W|+m-2=n-2,
\]
and 
\[\reg(I_c(G)^{(t)})=t|W|+\reg(I_c(H)^{(t)}) \geq t|W|+ (m-2)t = (n-2)t . \]
If $m=2$, then $H=K_2$ and $I_c(G)=(x_W)$. Therefore, we have
\[\reg(I_c(G)^{(t)})=t|W|=t(n-2) \ \text{and} \ \delta(I_c(G))=n-2 . \]
\end{proof}

\begin{proposition}\label{prop:b-value}
Let $G$ be a simple graph on $[n]$, where $n\ge3$, and suppose that $G$ has no isolated vertices. Let $I=I_c(G)$. Then
\[
b(I)=
\begin{cases}
n-2,&G\text{ is connected},\\
n-1,&G\text{ is disconnected}.
\end{cases}
\]
\end{proposition}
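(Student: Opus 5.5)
The plan is to turn $b(I)$ into a projective dimension computation via Terai's formula (Lemma~\ref{TeraiF}), and then read off that projective dimension from connectivity using Lemma~\ref{lem:depth-skeleton}.

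\textbf{Translating $b(I)$.} A subcomplex $\Gamma\subseteq\Delta=\Delta(I)$ with $\mathcal F(\Gamma)\subseteq\mathcal F(\Delta)$ corresponds to a nonempty set $S_1\subseteq E(G^c)$ together with a set $S_2\subseteq\mathcal T(G)$. Its Stanley--Reisner ideal is
\[
I_\Gamma=\Bigl(\bigcap_{e\in S_1}P_e\Bigr)\cap\Bigl(\bigcap_{T\in S_2}P_T\Bigr),
\]
where we allow $S_1\cup S_2\neq\emptyset$ in general. The Stanley--Reisner ideal of the Alexander dual is then
\[
J:=I_{\Gamma^*}=(x_e:e\in S_1)+(x_T:T\in S_2).
\]
By Lemma~\ref{TeraiF}, $\reg(I_\Gamma)=\pd_R(R/J)$. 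Let $V\subseteq[n]$ be the set of variables occurring in $J$. Then $\pd_R(R/J)=\pd_T(R/J\cap T)$ with $T=k[x_i:i\in V]$, and $|V|\ge2$.

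Regard $J$ as the Stanley--Reisner ideal of a complex $\Sigma$ on $V$. Since $J$ is generated in degrees $2$ and $3$, every $i\in V$ is a vertex of $\Sigma$. A pair $\{i,j\}\subseteq V$ is an edge of $\Sigma$ exactly when $\{i,j\}\notin S_1$.

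\textbf{Upper bounds.} By Lemma~\ref{lem:depth-skeleton},
\[
\pd(R/J)\le |V|-1\le n-1,
\]
so $b(I)\le n-1$ always. Equality $\pd(R/J)=n-1$ forces two conditions: $V=[n]$, and the $1$-skeleton of $\Sigma$ is disconnected.

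Now suppose $G$ is connected. Since $S_1\subseteq E(G^c)$, every edge of $G$ with both endpoints in $V$ is an edge of $\Sigma$. If $V=[n]$, the $1$-skeleton of $\Sigma$ therefore contains the connected spanning graph $G$. Hence it is connected, and $\pd\le n-2$. If $V\subsetneq[n]$, then $\pd\le|V|-1\le n-2$. So $b(I)\le n-2$ in the connected case.

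\textbf{Lower bounds.} Taking $\Gamma=\Delta$ gives $b(I)\ge\reg(I)\ge d(I)=n-2$, which settles the connected case.

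For $G$ disconnected, choose $S_1=E(G^c)$ and $S_2=\emptyset$.
\begin{itemize}
\item Each vertex has a non-neighbour in another component of $G$, so $V=[n]$.
\item The $1$-skeleton of $\Sigma$ is exactly $G$, which is disconnected.
\end{itemize}
Lemma~\ref{lem:depth-skeleton} then gives $\pd=n-1$, so $b(I)\ge n-1$.

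\textbf{Main obstacle.} The only delicate point is the bookkeeping in the translation step. One must check that the Alexander dual of $\Gamma$ has exactly the generators $x_e$ and $x_T$ listed above. One must also handle the support $V$ correctly when applying Lemma~\ref{lem:depth-skeleton}, since projective dimension is insensitive to unused variables. This is what makes the $V\subsetneq[n]$ subcase harmless.
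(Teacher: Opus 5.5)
Your proof is correct and follows essentially the same route as the paper: Terai's formula combined with Lemma~\ref{lem:depth-skeleton}, the observation that $S_1\subseteq E(G^c)$ makes every edge of $G$ an edge of the $1$-skeleton of $\Gamma^*$, and $\reg(I)\ge d(I)=n-2$ for the connected lower bound. The differences are cosmetic. The paper skips your support-$V$ case split by noting that every $i\in[n]$ is already a vertex of $\Gamma^*$, since all facets of $\Gamma$ have size at most $n-2$. For the disconnected case the paper uses the witness $A=\{\{x,y\}:x\in X,\ y\in Y\}$ with $X$ a component, giving $1$-skeleton $K_X\sqcup K_Y$, whereas your choice $S_1=E(G^c)$ gives $1$-skeleton $G$ itself; both work.
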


\begin{proof}
Let $\Delta=\Delta(I)$ and let $\Gamma$ satisfy $\mathcal F(\Gamma)\subseteq\mathcal F(\Delta)$. There exist subsets $A\subseteq E(G^c)$ and $B\subseteq\mathcal T(G)$ such that 
\[
\mathcal F(\Gamma)=\{[n]\setminus e:e\in A\}\cup\{[n]\setminus T:T\in B\}.
\]
By Alexander duality,
\[
J:=I_{\Gamma^*}
=
\left(
x_{[n]\setminus F}:F\in\mathcal F(\Gamma)
\right)
=
(x_e:e\in A)+(x_T:T\in B).
\]
The $1$-skeleton of $\Gamma^*$ has edge set \( E\bigl((\Gamma^*)^{(1)}\bigr) = \left\{\{i,j\}\subseteq [n] : \{i,j\}\notin A\right\}\), and all $n$ vertices occur. By Lemma~\ref{lem:depth-skeleton} and Terai's formula, $\reg(I_\Gamma)=\pd_R(R/J)\le n-1$. Hence, $b(I)\le n-1$.

If $G$ is connected, then $E(G)\subseteq E\bigl((\Gamma^*)^{(1)})$, and hence the $1$-skeleton of $\Gamma^*$ is connected. Thus, $\reg(I_\Gamma) \le n-2$ and $b(I)\le n-2$. On the other hand, by Equation \eqref{eq:general-symbolic-bound} with $t=1$ and Equation \eqref{eq:lower-main}, we obtain  $n-2\le\reg (I)\le b(I)$. Hence, $b(I)=n-2$.
\noindent

Suppose that \(G\) is disconnected. Let \(C\) be a connected component
of \(G\), and let
\[
X=V(C),\qquad Y=[n]\setminus X.
\]
Since \(G\) is disconnected, both \(X\) and \(Y\) are non-empty. Set
\[
A=\{\{x,y\}:x\in X,\ y\in Y\},
\qquad B=\emptyset.
\]
Since no edge of \(G\) joins \(X\) and \(Y\), we have $A\subseteq E(G^c)$. Moreover, 
\[
\bigl\{\{i,j\}\subseteq [n] : \{i,j\}\notin A\bigr\}
=
\bigl\{\{i,j\}\subseteq X\bigr\}
\cup
\bigl\{\{i,j\}\subseteq Y\bigr\}.
\]
Thus, the \(1\)-skeleton of the corresponding complex
\(\Gamma^*\) is \(K_X\sqcup K_Y\), and hence is disconnected. By Lemma~\ref{lem:depth-skeleton} and Terai's formula, $\reg(I_\Gamma)=n-1$ for the corresponding $\Gamma$. Hence, $b(I)=n-1$.
\end{proof}

\begin{corollary}\label{cor:two-values}
For every simple graph $G$ on $[n]$, $n\ge3$, with $E(G)\ne\emptyset$,
\[
(n-2)t \le \reg(\cI(G)^{(t)}) \le (n-2)t+1
\qquad  \text{ for all } t\ge1.
\]
If $c(G)=1$, then
\[
\reg(I_c(G)^{(t)})=(n-2)t.
\]
\end{corollary}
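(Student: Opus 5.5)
The plan is to reduce to the case without isolated vertices and then combine the upper bound \eqref{eq:general-symbolic-bound} with the computations of $\delta$ and $b$.

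Let $W$ be the set of isolated vertices of $G$, $H=G_{[n]\setminus W}$, and $m=|V(H)|$. Since $E(G)\ne\emptyset$, we have $m\ge2$.

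\emph{Case $m=2$.} Then $H=K_2$ and $I_c(G)=(x_W)$, so $I_c(G)^{(t)}=(x_W^t)$ is principal. Hence $\reg(I_c(G)^{(t)})=t|W|=(n-2)t$, which lies in the claimed interval. In this case $c(G)=1$, so the equality statement also holds.

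\emph{Case $m\ge3$.} By \eqref{eq:isolated-reduction},
\[
\reg(I_c(G)^{(t)})=t|W|+\reg(I_c(H)^{(t)}),
\qquad n-2=|W|+(m-2).
\]
Both assertions of the statement therefore transfer from $H$ to $G$. Moreover $c(H)=c(G)$, and $c(H)=1$ means that $H$ is connected. So it suffices to treat a graph $G$ on $[n]$ with no isolated vertices.

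In that situation, the lower bound $(n-2)t\le \reg(I_c(G)^{(t)})$ is \eqref{eq:lower-main}. For the upper bound, apply \eqref{eq:general-symbolic-bound} with $\delta(I_c(G))=n-2$ from Proposition~\ref{prop:delta-no-isolates}, and with $b(I_c(G))\le n-1$ from Proposition~\ref{prop:b-value}. This gives
\[
\reg(I_c(G)^{(t)})\le (n-2)(t-1)+(n-1)=(n-2)t+1.
\]
If moreover $c(G)=1$, then $G$ is connected, so Proposition~\ref{prop:b-value} gives $b=n-2$. The same inequality now yields $\reg(I_c(G)^{(t)})\le (n-2)(t-1)+(n-2)=(n-2)t$, and this matches the lower bound, giving equality.

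The only point needing care is the bookkeeping in the reduction step. The $K_2$-with-isolated-vertices case is not covered by Proposition~\ref{prop:b-value}, which needs $n\ge3$ vertices and no isolated vertices, so it has to be handled directly as above. In the reduction one must also check that $c(G)=1$ is equivalent to $H$ being connected, which holds because isolated vertices are not counted in $c(G)$. No step appears to be a genuine obstacle: everything follows from results already established.
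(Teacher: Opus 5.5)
Your proposal is correct and follows essentially the same route as the paper's proof. Like the paper, you strip the isolated vertices via Lemma~\ref{lem:isolated-reduction} and treat $H=K_2$ directly using $I_c(G)=(x_W)$. In the remaining case you combine $\delta=n-2$ and the value of $b$ in \eqref{eq:general-symbolic-bound} with the lower bound \eqref{eq:lower-main}.
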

\begin{proof}
Let $W$ be the set of isolated vertices, let $H=G_{[n]\setminus W}$, and $m=|V(H)|$. Since $E(G)\neq\emptyset$, it implies $m\ge2$.

If $m=2$, then $H=K_2$ and the result follows from $I_c(G)=(x_W)$. Suppose that $m\ge3$. By Proposition~\ref{prop:delta-no-isolates}, Proposition~\ref{prop:b-value}, and Equation \eqref{eq:general-symbolic-bound},
\[
\reg(I_c(H)^{(t)}) \le
\begin{cases}
(m-2)t,&H\text{ is connected},\\
(m-2)t+1,&H\text{ is disconnected}.
\end{cases}
\]
The conclusion now follows from Lemma~\ref{lem:isolated-reduction} and the lower bound Equation \eqref{eq:lower-main}.
\end{proof}

Therefore, Corollary~\ref{cor:two-values}, reduces the problem to determining which of the two possible values occurs when \(c(G)\ge2\).
%%%%%%%%%%%%%%%%%%%%%%%%%%%%%%%%%%%%%%%%%%%%
\subsection{The formula for regularity of symbolic powers}

\begin{theorem}\label{thm:k2-component}
Let $G$ be a simple graph on $[n]$ with $c(G)\ge2$, and suppose that $G$ has a connected component isomorphic to $K_2$. Then
\[
\reg(I_c(G)^{(t)})=(n-2)t+1\qquad \text{for all  } t \ge 1.
\]
\end{theorem}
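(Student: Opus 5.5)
We prove the upper bound and the lower bound separately. The upper bound $\reg(I_c(G)^{(t)})\le (n-2)t+1$ is already part of Corollary~\ref{cor:two-values}, so only the lower bound $\reg(I_c(G)^{(t)})\ge (n-2)t+1$ needs work.

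\textbf{Reduction.} By Lemma~\ref{lem:isolated-reduction}, removing the isolated vertices $W$ changes both sides by exactly $t|W|$. Removing them also keeps $c(G)$ and the $K_2$-component. So we may assume $G$ has no isolated vertices. Write the $K_2$-component as $\{1,2\}$ and put $Y=[n]\setminus\{1,2\}$. Since $c(G)\ge2$, $G_Y$ has an edge, so $|Y|\ge2$ and $n\ge4$. By \eqref{eq:primary-section3}, the facets of $\Delta=\Delta(I_c(G))$ are of three kinds:
\begin{itemize}
\item $[n]\setminus\{i,y\}$ with $i\in\{1,2\}$ and $y\in Y$; all of these occur, since such pairs are non-edges of $G$;
\item $[n]\setminus\{y,y'\}$ for non-edges $\{y,y'\}$ inside $Y$;
\item $[n]\setminus T$ for triangles $T\subseteq Y$, since the $K_2$-component contains no triangle.
\end{itemize}

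\textbf{Case $t=1$.} We use Terai's formula (Lemma~\ref{TeraiF}). The Alexander dual $\Delta^*$ has Stanley--Reisner ideal $(x_e:e\in E(G^c))+(x_T:T\in\mathcal T(G))$. Its $1$-skeleton therefore consists of exactly the edges of $G$, on all $n$ vertices. This graph is disconnected because $c(G)\ge2$. Lemma~\ref{lem:depth-skeleton} then gives $\pd_R(R/I_{\Delta^*})=n-1$, hence $\reg(I_c(G))=n-1$.

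\textbf{Case $t\ge2$.} We exhibit a nonvanishing local cohomology module via Takayama's formula (Lemma~\ref{TA}) together with Lemma~\ref{MTr}. Take $\boldsymbol\alpha\in\mathbb N^n$ with $\alpha_1=\alpha_2=0$ and $\alpha_y=t-1$ for all $y\in Y$, so that $|\boldsymbol\alpha|=(n-2)(t-1)$. By Lemma~\ref{MTr}, a facet $F$ survives in $\Delta_{\boldsymbol\alpha}(I_c(G)^{(t)})$ exactly when $\sum_{i\notin F}\alpha_i\le t-1$:
\begin{itemize}
\item a facet $[n]\setminus\{i,y\}$ has complement sum $t-1$, so it survives;
\item a facet $[n]\setminus\{y,y'\}$ has complement sum $2(t-1)>t-1$, so it is excluded;
\item a facet $[n]\setminus T$ has complement sum $3(t-1)>t-1$, so it is excluded.
\end{itemize}
Hence the degree complex $\Gamma$ has facets $[n]\setminus\{i,y\}$, where $i\in\{1,2\}$ and $y\in Y$.

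\textbf{Identifying $\Gamma$.} A set $F$ lies in $\Gamma$ iff it misses some $i\in\{1,2\}$ and some $y\in Y$. Equivalently, $\{1,2\}\not\subseteq F$ and $Y\not\subseteq F$. So $\Gamma=\partial\{1,2\}*\partial Y$ is the join of $S^0$ with $S^{|Y|-2}=S^{n-4}$, which is a sphere $S^{n-3}$. Thus $\widetilde H_{n-3}(\Gamma;k)\ne0$.

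\textbf{Conclusion.} Since $G_{\boldsymbol\alpha}=\emptyset$, Lemma~\ref{TA} gives $H^{n-2}_{\m}(R/I_c(G)^{(t)})_{\boldsymbol\alpha}\ne0$. Hence $a_{n-2}\ge(n-2)(t-1)$, so $\reg(R/I_c(G)^{(t)})\ge(n-2)(t-1)+(n-2)=(n-2)t$. This gives $\reg(I_c(G)^{(t)})\ge(n-2)t+1$.

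The main obstacle is choosing $\boldsymbol\alpha$ so that exactly the $K_2$-facets survive and the resulting complex has top homology in the right degree. For $t=1$ the weights cannot separate the facets (with $\boldsymbol\alpha=0$ every facet survives), which is why that case is handled separately through the $1$-skeleton of the Alexander dual.
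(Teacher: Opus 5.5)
Your proof is correct and follows the paper's argument: you reduce to the case of no isolated vertices, take the upper bound from Corollary~\ref{cor:two-values}, and use the same weight vector $\boldsymbol\alpha$ with Lemma~\ref{MTr} and Takayama's formula to get $H^{n-2}_{\mathfrak m}(R/I_c(G)^{(t)})_{\boldsymbol\alpha}\ne0$. There are two small differences: you prove $t=1$ directly (the $1$-skeleton of $\Delta^*$ is $G$ itself, so Terai's formula gives $n-1$) instead of citing Ficarra--Moradi, and you identify $\Gamma=\partial\{1,2\}*\partial Y\cong S^{n-3}$ directly, whereas the paper passes through the Alexander dual, Terai's formula, and Hochster's formula with a dimension argument.
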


\begin{proof}
Assume first that $G$ has no isolated vertices. The upper bound follows from Corollary~\ref{cor:two-values}. For $t=1$, the equality is the ordinary-power formula of Ficarra--Moradi \cite[Theorem~4.1]{fm25}. For $t\ge2$, let $\{a,b\}$ be a $K_2$-component, and $V=[n]\setminus\{a,b\}$. Define $\boldsymbol\alpha\in\mathbb N^n$ by
\[
\alpha_a=\alpha_b=0,
\qquad
\alpha_v=t-1\quad \text{ for all } v\in V.
\]
\noindent
Then $|\boldsymbol{\alpha}|=(n-2)(t-1).$
Let \(\Delta=\Delta(I_c(G))\). By Lemma~\ref{MTr}, we have
\[
\mathcal F\bigl(\Delta_{\boldsymbol{\alpha}}(I_c(G)^{(t)})\bigr)
=
\left\{
F\in\mathcal F(\Delta):
\sum_{i\notin F}\alpha_i\le t-1
\right\}.
\]
Since \(\{a,b\}\) is a \(K_2\)-component of \(G\), neither \(a\) nor
\(b\) is adjacent to a vertex of \(V\). Hence, $\{a,v\},\{b,v\}\in E(G^c) \ \text{for every }v\in V.$ For these nonedges,
\[
\alpha_a+\alpha_v
=
\alpha_b+\alpha_v
=
t-1,
\]
and therefore $ \{[n]\setminus\{a,v\}, \  [n]\setminus\{b,v\} \} \subseteq  \mathcal F(\Delta_{\boldsymbol{\alpha}}(I_c(G)^{(t)})).$ On the other hand, if \(e\in E(G^c)\) and \(e\subseteq V\), then $\sum_{i\in e}\alpha_i=2(t-1)>t-1. $ 

Moreover, there is no triangle of \(G\) containing \(a\) or \(b\), and hence every
\(T\in\mathcal T(G)\) is contained in \(V\). Thus, $\sum_{i\in T}\alpha_i=3(t-1)>t-1.$ 

It follows that
\[
\mathcal F\bigl(\Delta_{\boldsymbol{\alpha}}(I_c(G)^{(t)})\bigr)
=
\left\{
[n]\setminus\{a,v\},
[n]\setminus\{b,v\}
:
v\in V
\right\}.
\]

Let $\Gamma=\Delta_{\boldsymbol\alpha}(I_c(G)^{(t)})$. Then \(I_{\Gamma^*}=(x_ax_v,x_bx_v:v\in V),\) which is the edge ideal of the complete bipartite graph $\{a,b\}\sqcup V$. Its independence complex is the disjoint union of the simplices on $\{a,b\}$ and $V$. Hence, Lemma~\ref{lem:depth-skeleton} together with Terai's formula yields $\reg(I_\Gamma)=n-1$.

By Lemma~\ref{Hochster-Reg}, there exists $\sigma\in\Gamma$ such that $\widetilde H_{n-3}(\lk_\Gamma(\sigma);k)\ne0$. Since every facet of $\Gamma$ has dimension $n-3$, it implies $\sigma=\emptyset$. Thus, $\widetilde H_{n-3}(\Gamma;k)\ne0$. By Takayama's formula,
\[
H_{\mathfrak m}^{n-2}(R/I_c(G)^{(t)})_{\boldsymbol\alpha}\ne0.
\]
Consequently,
\[
\reg(R/I_c(G)^{(t)})\ge|\boldsymbol\alpha|+n-2=(n-2)t,
\]
so $\reg(I_c(G)^{(t)})\ge(n-2)t+1$. This proves the result when $G$ has no isolated vertices.

For arbitrary $G$, let  $W$ be the set of isolated vertices and $H=G_{[n]\setminus W}$. Applying the preceding case to the graph $H$ and by Lemma~\ref{lem:isolated-reduction}, we get
\[
\reg(I_c(G)^{(t)})=t|W|+(|V(H)|-2)t+1=(n-2)t+1.
\]
\end{proof}

Now, we consider the graph having no connected component isomorphic to \(K_2\). First, we have the following proposition.

\begin{proposition} \label{prop:nonnegative-degrees}
Let $G$ be a simple graph on $[n]$ with no isolated vertex, $c(G)\ge2$, and suppose that $G$ has no connected component isomorphic to $K_2$. Let $t\ge2$. If $\boldsymbol{\alpha}\in\mathbb N^n$ and $H_{\mathfrak m}^i(R/I_c(G)^{(t)})_{\boldsymbol{\alpha}}\neq0,$ then
\[
|\boldsymbol{\alpha}|+i\le(n-2)t-1.
\]
\end{proposition}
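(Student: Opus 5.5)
The plan is to combine Takayama's formula with the description of degree complexes in Lemma~\ref{MTr}, and then rule out the single extremal case by Alexander duality. Let $\Delta=\Delta(I_c(G))$, let $\boldsymbol\alpha\in\mathbb N^n$, and put $\Gamma=\Delta_{\boldsymbol\alpha}(I_c(G)^{(t)})$. Since $G_{\boldsymbol\alpha}=\emptyset$, Lemma~\ref{TA} gives
\[
H^i_{\mathfrak m}(R/I_c(G)^{(t)})_{\boldsymbol\alpha}\neq0 \iff \widetilde H_{i-1}(\Gamma;k)\neq0 .
\]
By Lemma~\ref{MTr}, $\Gamma$ has two kinds of facets. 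The first kind are the sets $[n]\setminus e$ with $e\in E(G^c)$ and $\sum_{j\in e}\alpha_j\le t-1$; these have dimension $n-3$. The second kind are the sets $[n]\setminus T$ with $T\in\mathcal T(G)$ and $\sum_{j\in T}\alpha_j\le t-1$; these have dimension $n-4$. Hence $\dim\Gamma\le n-3$, so a nonzero module forces $i\le n-2$.

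\textbf{Step 1 (reduction to one case).} Lemma~\ref{HT} and Corollary~\ref{cor:delta-and-lower} give $|\boldsymbol\alpha|\le(n-2)(t-1)$. Together with $i\le n-2$ this yields $|\boldsymbol\alpha|+i\le(n-2)t$. Equality can only occur when $i=n-2$, $|\boldsymbol\alpha|=(n-2)(t-1)$, and $\widetilde H_{n-3}(\Gamma;k)\neq0$. It therefore suffices to show that this configuration is impossible.

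\textbf{Step 2 (Alexander duality).} By Alexander duality on $[n]$, $\widetilde H_{n-3}(\Gamma;k)\cong\widetilde H^{0}(\Gamma^*;k)$. This is nonzero exactly when $\Gamma^*$ is disconnected. Here I need to check that every vertex lies in $\Gamma^*$, or else treat the degenerate case separately. As in the proof of Proposition~\ref{prop:b-value}, $\Gamma^*$ is the complex with $I_{\Gamma^*}=(x_e: e\in A)+(x_T:T\in B)$. Its $1$-skeleton is the complement of the graph
\[
A=\Bigl\{e\in E(G^c):\textstyle\sum_{j\in e}\alpha_j\le t-1\Bigr\};
\]
the triangle generators do not remove any edges. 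So $\Gamma^*$ is disconnected iff there is a partition $[n]=X\sqcup Y$ with $X,Y\neq\emptyset$ such that every pair $\{x,y\}$ with $x\in X$, $y\in Y$ lies in $A$. Concretely, each such $xy$ is a non-edge of $G$ and satisfies $\alpha_x+\alpha_y\le t-1$.

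\textbf{Step 3 (the contradiction, which I expect to be the crux).} Since no edge of $G$ joins $X$ and $Y$, both $X$ and $Y$ are unions of connected components of $G$. Because $G$ has no isolated vertices and no $K_2$-component, every component has at least $3$ vertices, so $|X|,|Y|\ge3$ and hence $\max(|X|,|Y|)\le n-3$. Let $M_X=\max_{x\in X}\alpha_x$ and $M_Y=\max_{y\in Y}\alpha_y$. Applying the condition to the maximizing pair gives $M_X+M_Y\le t-1$. Then
\[
|\boldsymbol\alpha|\le|X|M_X+|Y|M_Y\le\max(|X|,|Y|)\,(M_X+M_Y)\le(n-3)(t-1).
\]
Since $t\ge2$, this is strictly less than $(n-2)(t-1)$, contradicting Step 1. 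Hence $|\boldsymbol\alpha|+i\le(n-2)t-1$.

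The delicate points are the duality bookkeeping in Step 2 and the vertex-set issue flagged there. For the latter, if some vertex were missing from $\Gamma^*$, then $\Gamma$ would be a cone and thus acyclic, so no homology would survive anyway.
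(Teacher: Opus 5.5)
Your proof is correct and is essentially the paper's argument. The paper splits on whether the $1$-skeleton of $\Gamma^*$ is connected, and in the connected case uses Terai's formula, Lemma~\ref{lem:depth-skeleton} and Hochster's formula to get $i\le n-3$; this is your Alexander-duality reduction to the top-homology case $i=n-2$, and the partition estimate with $|X|,|Y|\ge3$ is the same as yours. The degenerate case you flag cannot occur: $\dim\Gamma\le n-3$ means no $[n]\setminus\{j\}$ lies in $\Gamma$ (equivalently, $I_{\Gamma^*}$ has no linear generators), so every vertex lies in $\Gamma^*$, whereas your \emph{cone, hence acyclic} justification is not valid in general (the boundary of the simplex contains every $[n]\setminus\{j\}$ yet has nonzero top homology).
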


\begin{proof}
Set $I=I_c(G)$ and $\Gamma=\Delta_{\boldsymbol\alpha}(I^{(t)})$. By Takayama's formula, $\widetilde H_{i-1}(\Gamma;k)\ne0$. By Lemma~\ref{MTr}, there exist $A\subseteq E(G^c)$ and $B\subseteq\mathcal T(G)$ such that
\[
\mathcal F(\Gamma)=\{[n]\setminus e:e\in A\}\cup\{[n]\setminus T:T\in B\}.
\]
Thus, the edge set of the \(1\)-skeleton of \(\Gamma^*\) is \(\bigl\{\{i,j\}\subseteq [n] : \{i,j\}\notin A\bigr\}.\) If the graph $G$ is connected, then by Lemma~\ref{lem:depth-skeleton} and Terai's formula, $\reg(I_\Gamma) \le n-2$. Since $\widetilde H_{i-1}(\Gamma;k)\ne0$, it follows from Lemma~\ref{Hochster-Reg} that $i\le n-3$. Moreover, by Lemma~\ref{HT} and Corollary~\ref{cor:delta-and-lower},
\[
|\boldsymbol{\alpha}|\le a_i(R/I^{(t)})\le(n-2)(t-1).
\]
Hence, \[
|\boldsymbol{\alpha}|+i\le(n-2)(t-1)+(n-3)=(n-2)t-1.
\]
\noindent
Suppose that the $1$-skeleton of $\Gamma^*$ is disconnected. There is a partition $[n]=X\sqcup Y$ such that every pair $\{x,y\}$ with $x\in X$ and $y\in Y$ belongs to $A$. Since $A\subseteq E(G^c)$, no edge of $G$ joins $X$ to $Y$; hence every connected component of $G$ is contained in one of these sets. Each of $X$ and $Y$ contains a component of $G$, and every such component has at least three vertices. Thus,
\begin{equation}\label{eq:parts-at-least-three}
|X|,|Y|\ge3.
\end{equation}
By Lemma~\ref{MTr}, \(\alpha_x+\alpha_y\le t-1,\) for every $x\in X$ and $y\in Y$. Let $M=\max\{\alpha_x:x\in X\}$. Then $\alpha_y\le t-1-M$ for every $y\in Y$. Therefore, we have
\[
|\boldsymbol\alpha| = \sum_{x\in X}\alpha_x
+
\sum_{y\in Y}\alpha_y \le |X|M+|Y|(t-1-M)\le(t-1)\max\{|X|,|Y|\}.
\]
In the disconnected case, by Lemma~\ref{lem:depth-skeleton}, Terai's formula and Lemma~\ref{Hochster-Reg}, we obtain  $i\le n-2$. Set $r=\min\{|X|,|Y|\}$. By Equation \eqref{eq:parts-at-least-three}, $r\ge3$, and hence
\[
|\boldsymbol\alpha|+i\le(t-1)(n-r)+(n-2)\le(n-2)t-1,
\]
since $(t-1)(r-2)\ge1$.
\end{proof}

\begin{lemma}\label{lem:universal-local}
Let \(H\) be a finite simple graph on \([m]\), where \(m\geq 2\), and
let \(J\subseteq k[x_1,\ldots,x_m]\) be a nonzero proper
square-free monomial ideal with
\[
\Min(J)
=
\{P_e:e\in E(H^c)\}
\cup
\{P_T:T\in\mathcal T(H)\}.
\]
Then $\delta(J)\leq m-1, b(J)\leq m-1,$
and consequently
\[
\operatorname{reg}(J^{(t)})
\leq (m-1)t \qquad \text{ for all } t\geq 1.
\]
\end{lemma}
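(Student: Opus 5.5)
The plan is to bound $\delta(J)$ and $b(J)$ separately and then insert both bounds into Equation~\eqref{eq:general-symbolic-bound}. That gives
\[
\reg(J^{(t)})\le \delta(J)(t-1)+b(J)\le (m-1)(t-1)+(m-1)=(m-1)t .
\]
Unlike in Propositions~\ref{prop:delta-no-isolates} and~\ref{prop:b-value}, $H$ may now have isolated vertices. Only crude bounds are needed, so both estimates should be much softer.

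\textbf{Bound on $\delta(J)$.} Let $v$ be a vertex of $\operatorname{SP}(J)$. The perturbation argument from the proof of Proposition~\ref{prop:delta-no-isolates} shows $v_i\le 1$ for every $i$: if $v_i>1$, then every inequality involving $x_i$ is strict, so $v\pm\varepsilon\mathbf e_i$ both lie in $\operatorname{SP}(J)$. Hence $|v|\le m$.

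To improve this to $|v|\le m-1$, suppose $|v|>m-1$. Then, in the notation $d_i=1-v_i$, we have $\sum_i d_i<1$.
\begin{itemize}
\item A tight edge constraint would force $d_i+d_j=1$, which is impossible.
\item A tight triangle constraint would force $\sum_{i\in T} d_i=2$, which is impossible.
\item A tight coordinate $v_j=0$ would force $d_j=1$, which is impossible.
\item A tight upper constraint is not available, since $\operatorname{SP}(J)$ has no inequalities of the form $x_i\le 1$.
\end{itemize}
So no defining inequality is tight at $v$, and $v$ cannot be a vertex (for $m\ge 1$ there must be $m$ tight inequalities). This contradiction gives $\delta(J)\le m-1$. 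The same conclusion is immediate if $\operatorname{SP}(J)$ happens to have no vertices.

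\textbf{Bound on $b(J)$.} Let $\Delta=\Delta(J)$ and let $\Gamma$ be a subcomplex with $\mathcal F(\Gamma)\subseteq\mathcal F(\Delta)$. As in Proposition~\ref{prop:b-value}, Alexander duality gives
\[
I_{\Gamma^*}=(x_e:e\in A)+(x_T:T\in B)
\]
for some $A\subseteq E(H^c)$ and $B\subseteq\mathcal T(H)$. By Terai's formula (Lemma~\ref{TeraiF}), $\reg(I_\Gamma)=\pd(R/I_{\Gamma^*})$. Every variable $x_i$ is a non-generator, since the generators have degree at least $2$. Hence $I_{\Gamma^*}\subseteq\m$ is a proper ideal without linear forms, and every vertex of $[m]$ lies in $\Gamma^*$.

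The aim is $\pd\le m-1$, which is equivalent to $\depth k[\Gamma^*]\ge 1$. This holds because $\m$ is not an associated prime: $\Gamma^*$ is a nonempty complex and has a vertex, so $\m\notin\operatorname{Ass}$. Lemma~\ref{lem:depth-skeleton} also gives this directly, as $\depth\ge 1$ whenever $m\ge2$.

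\textbf{Main obstacle.} The step requiring the most care is the $\delta$ bound. One must check that the only defining inequalities of $\operatorname{SP}(J)$ are the edge, triangle and nonnegativity constraints, and that a vertex needs at least one tight constraint. Once that is verified, the result follows by combining the two bounds with Equation~\eqref{eq:general-symbolic-bound}.
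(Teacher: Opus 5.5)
Your proposal is correct and follows the paper's proof. It uses the same defect argument at a vertex of $\operatorname{SP}(J)$ to get $\delta(J)\le m-1$, and the same Alexander duality, Terai formula and $\depth k[\Gamma^*]\ge 1$ argument to get $b(J)\le m-1$, followed by the general bound \eqref{eq:general-symbolic-bound}. Your $\delta$ step is slightly more direct than the paper's: since $d_i+d_j\le\sum_k d_k<1$ already rules out tight edge constraints, no defining inequality is tight at $v$, so the paper's detour through the bipartite-component claim and the odd-cycle count is not needed.
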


\begin{proof}
Let \(v=(v_1,\ldots,v_m)\) be a vertex of
\(\operatorname{SP}(J)\). As in the proof of
Proposition~\ref{prop:delta-no-isolates}, one has $v_i\leq 1 \ (i=1,\ldots,m).$ Suppose that $|v|>m-1, $ and set $d_i=1-v_i$. Then $d_i\ge0$ and $\sum_i d_i<1$. Consequently, there is no equality of the form $v_i=0$ or $\sum_{i\in T}v_i=1$. Thus the only defining inequalities that may be equalities at $v$ are $v_i+v_j=1$, equivalently $d_i+d_j=1$. Let $Q$ be the spanning graph whose edges are precisely the pairs $\{i,j\}$ satisfying this equality. 

By the claim established in the proof of
Proposition~\ref{prop:delta-no-isolates}, every bipartite connected
component of \(Q\) would contain an index \(i\) with \(v_i=0\).
Since all coordinates of \(v\) are positive, \(Q\) has no bipartite
connected component.

Therefore, every connected component of \(Q\) is non-bipartite and contains an odd cycle. Along such a cycle, the equations $d_i+d_j=1$ force $d_i=1/2$ on all vertices of the cycle. It follows that $\sum_i d_i\ge3/2$, contradicting $\sum_i d_i<1$. Thus, $|v|\le m-1$, and it would imply  $\delta(J)\le m-1$.

Now, let \(\Gamma\) be a simplicial complex occurring in the
definition of \(b(J)\). The minimal generators of
\(I_{\Gamma^*}\) have degree two or three. Thus, \(I_{\Gamma^*}\) has no linear generator, and all \(m\) vertices
belong to \(\Gamma^*\). By Lemma~\ref{lem:depth-skeleton}, $\depth (k[\Gamma^*])\geq 1.$ Hence, by the Auslander--Buchsbaum and Terai formulas,
\[
\operatorname{reg}(I_\Gamma)
=
\operatorname{pd}_{k[x_1,\ldots,x_m]}
(k[\Gamma^*])
\leq m-1.
\]
It follows that $b(J)\leq m-1.$ Finally, by Equation \eqref{eq:general-symbolic-bound}, we have
\[
\operatorname{reg}(J^{(t)})
\leq
\delta(J)(t-1)+b(J)
\leq
(m-1)(t-1)+(m-1)
=
(m-1)t.
\]
\end{proof}

\begin{lemma}\label{lem:localized-ideal}
Let \(G\) be a simple graph on \([n]\) with no isolated vertices, let
\(\sigma\subsetneq[n]\), and let
$V=[n]\setminus\sigma,
G'=G[V], R'=k[x_i:i\in V].$
Set $J=I_c(G)R[x_i^{-1}:i\in\sigma]\cap R'.$
Then
\[
J
=
\left(\bigcap_{e\in E(G'^c)}P_e\right)
\cap
\left(\bigcap_{T\in\mathcal T(G')}P_T\right).
\]
If \(J\neq R'\), then this decomposition is irredundant, and
\[
\Min(J)
=
\{P_e:e\in E(G'^c)\}
\cup
\{P_T:T\in\mathcal T(G')\}.
\]
\noindent
Moreover, in this case, let \(t\ge1\) and
\(\boldsymbol\alpha\in\mathbb Z^n\) with
\(G_{\boldsymbol\alpha}=\sigma\), and let
\(\boldsymbol\alpha'\) be the restriction of
\(\boldsymbol\alpha\) to \(V\). Then
\[
\Delta_{\boldsymbol\alpha}
\bigl(I_c(G)^{(t)}\bigr)
=
\Delta_{\boldsymbol\alpha'}
\bigl(J^{(t)}\bigr).
\]
\end{lemma}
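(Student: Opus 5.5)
We use Lemma~\ref{lem:localization} to compute the localization, then check irredundancy directly, then compare degree complexes facet by facet using Lemmas~\ref{HoaTrLem} and~\ref{MTr}.

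\emph{The decomposition of \(J\).} Let \(\Delta=\Delta(I_c(G))\), whose facets are \([n]\setminus e\) for \(e\in E(G^c)\) and \([n]\setminus T\) for \(T\in\mathcal T(G)\). Localizing at \(x_i\) for \(i\in\sigma\) turns into the unit ideal every component \(P_F\) with \(F\cap\sigma\ne\emptyset\). Components with \(F\subseteq V\) are unchanged, and they survive the contraction to \(R'\). Hence
\[
J=\bigcap_{e\in E(G^c),\,e\subseteq V}P_e\cap\bigcap_{T\in\mathcal T(G),\,T\subseteq V}P_T .
\]
This agrees with the proof of Lemma~\ref{lem:localization}. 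Now a pair \(e\subseteq V\) is a nonedge of \(G\) exactly when it is a nonedge of \(G'=G[V]\). Likewise \(T\subseteq V\) is a triangle of \(G\) exactly when it is a triangle of \(G'\). This gives the displayed formula.

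\emph{Irredundancy.} Assume \(J\ne R'\). The sets \(P_e\) (generated by two variables) and \(P_T\) (generated by three) form a family of primes indexed by distinct subsets of \(V\). Such an intersection is irredundant unless one index set contains another. Two distinct \(2\)-sets cannot be nested, and neither can two distinct \(3\)-sets. The only possible containment is \(e\subset T\) with \(e\in E(G'^c)\) and \(T\in\mathcal T(G')\). That is impossible, because every pair inside a triangle of \(G'\) is an edge of \(G'\). Hence \(\Min(J)\) is exactly the stated set.

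\emph{Equality of degree complexes.} Let \(\boldsymbol\alpha\in\mathbb Z^n\) with \(G_{\boldsymbol\alpha}=\sigma\). Since \(J\ne R'\), some facet of \(\Delta\) contains \(\sigma\), so \(\sigma\in\Delta\).
\begin{itemize}
\item By Lemma~\ref{HoaTrLem}, the facets of \(\Delta_{\boldsymbol\alpha}(I_c(G)^{(t)})\) are the facets \(F'\) of \(\lk_\Delta(\sigma)\) satisfying \(\sum_{i\notin F'\cup\sigma}\alpha_i\le t-1\).
\item By Lemma~\ref{lem:localization}, \(\lk_\Delta(\sigma)=\Delta(J)\) as a complex on \(V\). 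Its facets are \(F'=F\setminus\sigma\) for facets \(F\supseteq\sigma\) of \(\Delta\), and they satisfy \(V\setminus F'=[n]\setminus F\).
\item The restriction \(\boldsymbol\alpha'\) lies in \(\mathbb N^{V}\), so Lemma~\ref{MTr} applies to \(J^{(t)}=I_{\Delta(J)}^{(t)}\). It says the facets of \(\Delta_{\boldsymbol\alpha'}(J^{(t)})\) are the facets \(F'\) of \(\Delta(J)\) with \(\sum_{i\in V\setminus F'}\alpha_i\le t-1\).
\end{itemize}
The indices \(i\notin F'\cup\sigma\) are exactly the indices in \(V\setminus F'\). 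So the two conditions coincide, and both complexes live on \(V\). Two complexes with the same facets are equal, which proves the last assertion.

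The main point requiring care is this last identification. We must check that Lemma~\ref{HoaTrLem} really describes a complex on \([n]\setminus G_{\boldsymbol\alpha}=V\). We must also use that \(J\ne R'\) guarantees \(\sigma\in\Delta\), so that neither degree complex is void.
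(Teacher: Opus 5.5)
Your proposal is correct and follows essentially the same route as the paper. You obtain the decomposition of \(J\) from the primary components of \(I_c(G)\) after localization; you get irredundancy because no nonedge of \(G'\) lies in a triangle of \(G'\); and you prove equality of the degree complexes by matching the facet descriptions in Lemma~\ref{HoaTrLem} and Lemma~\ref{MTr}, using \(\lk_\Delta(\sigma)=\Delta(J)\) and \(V\setminus F'=[n]\setminus F\). The only cosmetic difference is that the paper splits into the cases \(\sigma\in\Delta\) and \(\sigma\notin\Delta\) via Lemma~\ref{lem:localization}, whereas you handle both at once through the empty-intersection convention.
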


\begin{proof}
Let $\Delta=\Delta(I_c(G)).$ By Equation \eqref{eq:primary-section3},
\[
\mathcal F(\Delta)
=
\{[n]\setminus e:e\in E(G^c)\}
\cup
\{[n]\setminus T:T\in\mathcal T(G)\}.
\]
A facet \([n]\setminus e\) contains \(\sigma\) if and only if
\(e\subseteq V\), or equivalently, $e\in E(G'^c).$ Similarly, $[n]\setminus T $ contains $\sigma$ if and only if $T \subseteq V$, which is equivalent to $ T\in\mathcal T(G').$

If \(\sigma\notin\Delta\), then no facet of \(\Delta\) contains
\(\sigma\). Hence, $E(G'^c)=\emptyset \ \text{and} \ \mathcal T(G')=\emptyset. $ By Lemma~\ref{lem:localization}, $J=R', $ and the above decomposition follows from the convention on empty
intersections.

Suppose that \(\sigma\in\Delta\). By
Lemma~\ref{lem:localization}, $ J=I_{\lk_\Delta(\sigma)}.$ Moreover,
\[
\mathcal F(\lk_\Delta(\sigma))
=
\{V\setminus e:e\in E(G'^c)\}
\cup
\{V\setminus T:T\in\mathcal T(G')\}.
\]
Therefore,
\[
J
=
\left(\bigcap_{e\in E(G'^c)}P_e\right)
\cap
\left(\bigcap_{T\in\mathcal T(G')}P_T\right).
\]
\noindent

Since no nonedge of \(G'\) is contained in a triangle of \(G'\),  there is no
prime \(P_e\) with \(e\in E(G'^c)\), is contained in a prime \(P_T\), with
\(T\in\mathcal T(G')\), and the above decomposition is
irredundant. Consequently,
\[
\Min(J)
=
\{P_e:e\in E(G'^c)\}
\cup
\{P_T:T\in\mathcal T(G')\}.
\]

For the last assertion, the assumption \(J\neq R'\), together with Lemma~\ref{lem:localization}, implies that \(\sigma\in\Delta\). Since \(G_{\boldsymbol\alpha}=\sigma\), we have \(\boldsymbol\alpha'\in\mathbb N^V\). By Lemma~\ref{HoaTrLem},
\[
\mathcal F\left(
\Delta_{\boldsymbol\alpha}(I_c(G)^{(t)})
\right)
=
\left\{
F\in
\mathcal F\bigl(\lk_\Delta(\sigma)\bigr):
\sum_{i\in V\setminus F}\alpha_i\le t-1
\right\}.
\]
On the other hand, since $J=I_{\lk_\Delta(\sigma)} $ and \(\boldsymbol\alpha'\) is nonnegative then by Lemma~\ref{MTr}, we have
\[
\mathcal F\left(
\Delta_{\boldsymbol\alpha'}(J^{(t)})
\right)
=
\left\{
F\in
\mathcal F\bigl(\lk_\Delta(\sigma)\bigr):
\sum_{i\in V\setminus F}\alpha_i\le t-1
\right\}.
\]
Thus, the two degree complexes have the same facets, and hence
\[
\Delta_{\boldsymbol\alpha}
\bigl(I_c(G)^{(t)}\bigr)
=
\Delta_{\boldsymbol\alpha'}
\bigl(J^{(t)}\bigr).
\]
\end{proof}

\begin{theorem}\label{thm:no-k2-component}
Let $G$ be a graph on $[n]$ with $c(G)\ge2$, and suppose that no connected component of $G$ is isomorphic to $K_2$. Then
\[
\operatorname{reg}(I_c(G)^{(t)})=
\begin{cases}
n-1, & t=1,\\
(n-2)t, & t\ge2.
\end{cases}
\]
\end{theorem}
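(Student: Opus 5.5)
First reduce to the case without isolated vertices: if $W$ is the set of isolated vertices and $H=G_{[n]\setminus W}$, then $c(H)=c(G)\ge2$, and $H$ has no $K_2$-component. Since every component of $H$ has at least three vertices, $|V(H)|\ge6$. By Lemma~\ref{lem:isolated-reduction}, both formulas for $H$ lift to $G$ after adding $t|W|$. So assume $G$ has no isolated vertices.

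For $t=1$, the value $n-1$ is the ordinary-power formula of Ficarra--Moradi \cite[Theorem~4.1]{fm25} for disconnected graphs. Alternatively, Corollary~\ref{cor:two-values} gives the upper bound $n-1$. For the lower bound, take $\boldsymbol\alpha=0$. Then $\Delta_{\boldsymbol 0}(I_c(G))=\Delta$, and $\reg(I_c(G))=n-1$ follows from Terai's formula, because $\Delta^*$ has disconnected $1$-skeleton: no edge of $G^c$ across a component split is removed. Indeed, $I_{\Delta^*}$ is generated by $x_e$ for $e\in E(G^c)$ and $x_T$, and its $1$-skeleton is exactly $G$, which is disconnected.

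For $t\ge2$, the lower bound $(n-2)t$ is Corollary~\ref{cor:delta-and-lower}, so it suffices to show $a_i(R/I_c(G)^{(t)})+i\le(n-2)t-1$ for all $i$. This means checking $|\boldsymbol\alpha|+i\le(n-2)t-1$ for every $\boldsymbol\alpha\in\mathbb Z^n$ with $H^i_{\m}(R/I_c(G)^{(t)})_{\boldsymbol\alpha}\ne0$. If $\boldsymbol\alpha\in\mathbb N^n$, this is Proposition~\ref{prop:nonnegative-degrees}.

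Now suppose $\sigma=G_{\boldsymbol\alpha}\ne\emptyset$. Takayama's formula requires the degree complex to be non-void, so $\sigma\in\Delta$, and hence $J\ne R'$ in Lemma~\ref{lem:localized-ideal}. Set $m=|V|=n-|\sigma|$. That lemma gives $\Delta_{\boldsymbol\alpha}(I_c(G)^{(t)})=\Delta_{\boldsymbol\alpha'}(J^{(t)})$, with $\boldsymbol\alpha'\in\mathbb N^V$. By Takayama's formula applied twice, $H^i_{\m}(R/I_c(G)^{(t)})_{\boldsymbol\alpha}\ne0$ implies $H^{i-|\sigma|}_{\m'}(R'/J^{(t)})_{\boldsymbol\alpha'}\ne0$. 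Lemma~\ref{lem:universal-local} applies to $J$ (with $H=G'$) and gives $|\boldsymbol\alpha'|+i-|\sigma|\le\reg(R'/J^{(t)})\le(m-1)t-1$. Since every coordinate of $\boldsymbol\alpha$ on $\sigma$ is at most $-1$, we get $|\boldsymbol\alpha|\le|\boldsymbol\alpha'|-|\sigma|$. Therefore
\[
|\boldsymbol\alpha|+i\le(m-1)t-1=(n-|\sigma|-1)t-1\le(n-2)t-1,
\]
because $|\sigma|\ge1$. When $m\le1$ we have $J=R'$, so that case cannot occur. The case $m=2$ with $J$ a prime $P_e$ is also covered by Lemma~\ref{lem:universal-local}.

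The main obstacle is making the Takayama bookkeeping exact. Passing from $H^i$ of $R/I^{(t)}$ to the shifted homology index $i-|\sigma|-1$ for $R'/J^{(t)}$ must land in the correct cohomological degree $i-|\sigma|$, where $\boldsymbol\alpha'$ has empty negative part, before $\reg(J^{(t)})$ can be applied. The margin $t\ge1$ gained from $|\sigma|\ge1$ then suffices.
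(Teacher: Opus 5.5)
Your proposal is correct and follows the paper's proof essentially step for step: you reduce to the case without isolated vertices via Lemma~\ref{lem:isolated-reduction}, use Proposition~\ref{prop:nonnegative-degrees} for $\boldsymbol\alpha\in\mathbb N^n$, and for $G_{\boldsymbol\alpha}=\sigma\neq\emptyset$ pass through Lemma~\ref{lem:localized-ideal}, Takayama's formula over $R'$, and the bound $\reg(J^{(t)})\le(m-1)t$ of Lemma~\ref{lem:universal-local}. The only differences are minor: for $t=1$ you add a self-contained argument via Terai's formula (the $1$-skeleton of $\Delta^*$ is $G$, so $\pd_R(R/I_{\Delta^*})=n-1$), where the paper simply cites \cite[Theorem~4.1]{fm25}; and you leave implicit the check that $J\neq0$, which is needed for Lemma~\ref{lem:universal-local} and holds because $J$ is an intersection of at least one nonzero prime $P_e$ or $P_T$.
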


\begin{proof}
Suppose that \(G\) has no isolated vertices, and set \(I=I_c(G)\) and \(\Delta=\Delta(I)\). The case $t=1$ follows from \cite[Theorem~4.1]{fm25}. Let $t\ge2$. By Equation \eqref{eq:lower-main}, it suffices to prove $\reg(R/I^{(t)})\le(n-2)t-1$.

Let $\boldsymbol\alpha\in\mathbb Z^n$ satisfy $H_{\mathfrak m}^i(R/I^{(t)})_{\boldsymbol\alpha}\ne0.$ If $\boldsymbol\alpha\in\mathbb N^n$, then by Proposition~\ref{prop:nonnegative-degrees}, $|\boldsymbol\alpha|+i\le(n-2)t-1$. Suppose that $\boldsymbol\alpha\notin\mathbb N^n$. Let $\sigma=G_{\boldsymbol\alpha}$, and $w=|\sigma|$.  Since \(H^i_{\mathfrak m}(R/I^{(t)})_\alpha\neq0,\) by Takayama's formula we imply that $\sigma\in\Delta$. Since every facet of $\Delta$ has cardinality at most $n-2$, we have $m:=n-w\ge2$.

%We have $\sigma \neq [n]$. Indeed, if $\sigma = [n]$, then $I^{(t)} R[x_1^{-1}, \ldots, x_n^{-1}] = R[[x_1^{-1}, \ldots, x_n^{-1}]$ since $I^{(t)} \neq 0$. Thus $\Delta_\alpha(I^{(t)})$ is the void complex, contradicting the assumption that $H_{\mathfrak m}^i(R/I^{(t)})_{\boldsymbol\alpha}\ne0$. 

Set $V=[n]\setminus\sigma$, $R'=k[x_i:i\in V]$, and \(J=IR[x_j^{-1}:j\in\sigma]\cap R' \). By Lemma \ref{lem:localization}, \(J=I_{\operatorname{lk}_\Delta(\sigma)} \), so $J$ is proper. Moreover, $J\neq0$, since otherwise $\operatorname{lk}_\Delta(\sigma)$ would be the full simplex on $V$, which would imply $[n]\in\Delta$, a contradiction.

Let $\boldsymbol{\alpha}'$ be the restriction of $\boldsymbol{\alpha}$ to $V$. Since \(G_{\boldsymbol{\alpha}}=\sigma\), combining Takayama's formula and
Lemma~\ref{lem:localized-ideal} we get
\[
0\neq
\widetilde H_{i-w-1}
\left(
\Delta_{\boldsymbol{\alpha}}(I^{(t)});k
\right)
=
\widetilde H_{i-w-1}
\left(
\Delta_{\boldsymbol{\alpha}'}(J^{(t)});k
\right).
\]
Since \(\boldsymbol{\alpha}'\in\mathbb N^V\), by Takayama's
formula over \(R'\), this implies that $H^{i-w}_{\mathfrak m'}
\left(R'/J^{(t)}\right)_{\boldsymbol{\alpha}'}
\neq 0.$
%In particular, \(J\) is nonzero and proper; otherwise its degree complex would be void or a simplex and would have no nonzero reduced homology. Thus \(m\geq2\), and by Lemma~\ref{lem:localized-ideal} we have \(J=R'\) if \(m\leq1\).
Therefore, by Lemma~\ref{lem:universal-local}, $\reg(J^{(t)})\le(m-1)t$. Since every coordinate of $\boldsymbol\alpha$ indexed by $\sigma$ is at most $-1$,
\[
|\boldsymbol\alpha|+i\le|\boldsymbol\alpha'|+(i-w)\le\reg(R'/J^{(t)})=\reg(J^{(t)})-1.
\]
It follows that
\[
|\boldsymbol\alpha|+i\le(m-1)t-1=(n-w-1)t-1\le(n-2)t-1.
\]
Thus, $\reg(I^{(t)})=(n-2)t$ when $G$ has no isolated vertices.

For arbitrary $G$, let $W$ be its set of isolated vertices and let $H=G_{[n]\setminus W}$. Then $c(H)=c(G)$ and $H$ has no $K_2$-component. By Lemma~\ref{lem:isolated-reduction} and the preceding case,
\[
\reg(I_c(G))=|W|+|V(H)|-1=n-1
\]
and, for $t\ge2$,
\[
\reg(I_c(G)^{(t)})=t|W|+(|V(H)|-2)t=(n-2)t.
\]
\end{proof}

\begin{theorem} \label{thm:main-symbolic-regularity}
Let \(G\) be a graph on \([n]\), where \(n\ge3\) and
\(E(G)\ne\emptyset\). Then
\[
\reg(I_c(G))
=
\begin{cases}
n-2, & c(G)=1,\\
n-1, & c(G)\ge2.
\end{cases}
\]
For every \(t\ge2\),
\[
\reg(I_c(G)^{(t)})
=
\begin{cases}
(n-2)t+1,
& c(G)\ge2 \text{ and \(G\) has a \(K_2\)-component},\\
(n-2)t,
& \text{otherwise}.
\end{cases}
\]
\end{theorem}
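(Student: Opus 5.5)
The theorem is essentially an assembly of the results already proved in this section, so the proof will be a case analysis on $c(G)$ and on the presence of a $K_2$-component. Since $E(G)\neq\emptyset$, we have $c(G)\ge1$.

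\emph{Case $c(G)=1$.} Corollary~\ref{cor:two-values} gives $\reg(I_c(G)^{(t)})=(n-2)t$ for every $t\ge1$. This yields both the value $n-2$ for $t=1$ and the ``otherwise'' branch for $t\ge2$.

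\emph{Case $c(G)\ge2$ and $G$ has a $K_2$-component.} Theorem~\ref{thm:k2-component} gives $\reg(I_c(G)^{(t)})=(n-2)t+1$ for all $t\ge1$. For $t=1$ this reads $n-1$, and for $t\ge2$ it is the first branch.

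\emph{Case $c(G)\ge2$ and $G$ has no $K_2$-component.} Theorem~\ref{thm:no-k2-component} applies and gives $n-1$ for $t=1$ and $(n-2)t$ for $t\ge2$, which is the ``otherwise'' branch.

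The only point requiring care is that these cases exhaust all graphs satisfying the hypotheses. One must also check that the standing hypotheses of the cited results ($n\ge3$, $E(G)\ne\emptyset$, isolated vertices allowed) match those of the present statement. Corollary~\ref{cor:two-values}, Theorem~\ref{thm:k2-component} and Theorem~\ref{thm:no-k2-component} were all formulated for arbitrary $G$, with isolated vertices handled via Lemma~\ref{lem:isolated-reduction}.

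A degenerate situation arises when, after removing isolated vertices, $H=K_2$. Then $c(G)=1$, and this was treated inside Corollary~\ref{cor:two-values} via $I_c(G)=(x_W)$.

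In the $t=1$ cases, the cited theorems themselves rely on \cite[Theorem~4.1]{fm25}, and nothing further is needed. I expect no real obstacle: the substantive work (the upper bounds via $\delta$ and $b$, the lower bound via the degree complex $\boldsymbol\alpha$, and the localization argument) has already been done.
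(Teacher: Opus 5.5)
Your proposal is correct and follows the paper's own proof, which likewise just combines Corollary~\ref{cor:two-values} (for $c(G)=1$) with Theorems~\ref{thm:k2-component} and~\ref{thm:no-k2-component} (for $c(G)\ge2$). Your three-way case split is exhaustive because $E(G)\neq\emptyset$ forces $c(G)\ge1$, and each of the cited results already handles isolated vertices.
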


\begin{proof}
The result follows immediately from Corollary~\ref{cor:two-values} and Theorems~\ref{thm:k2-component} and \ref{thm:no-k2-component}.
\end{proof}

The following formula for ordinary powers was proved by Ficarra and Moradi \cite[Theorem~4.1]{fm25}:
\begin{equation}\label{eq:ordinary-powers}
\reg(\cI(G)^t)=
\begin{cases}
(n-1)t, & 1\le t\le c(G)-2,\\
(n-2)t+c(G)-1, & t\ge c(G)-1.
\end{cases}
\end{equation}

%The folllowing corollary give the complete answer of \cite[Question 2.9]{fm25}.

\begin{corollary}\label{cor:symbolic-ordinary-comparison}
For every $t\ge1$,
\[
\reg(I_c(G)^{(t)})\le\reg(I_c(G)^t).
\]
Moreover, equality holds for every $t\ge1$ if and only if either $c(G)=1$, or $c(G)=2$ and $G$ has a $K_2$-component.
\end{corollary}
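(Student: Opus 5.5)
The plan is a direct comparison of Theorem~\ref{thm:main-symbolic-regularity} with the Ficarra--Moradi formula \eqref{eq:ordinary-powers}, split by the value of $c:=c(G)$. Throughout, $n\ge3$ and $E(G)\neq\emptyset$, so $c\ge1$. Write $\varepsilon(G)=1$ if $c\ge2$ and $G$ has a $K_2$-component, and $\varepsilon(G)=0$ otherwise. By Theorem~\ref{thm:main-symbolic-regularity}, $\reg(I_c(G)^{(t)})=(n-2)t+\varepsilon(G)$ for $t\ge2$. For $t=1$ the symbolic and ordinary powers coincide, so equality at $t=1$ is automatic.

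\emph{Case $c=1$.} Formula \eqref{eq:ordinary-powers} has an empty first range, so it gives $\reg(I_c(G)^t)=(n-2)t$ for all $t\ge1$. This agrees with $\reg(I_c(G)^{(t)})=(n-2)t$ from Corollary~\ref{cor:two-values}, so equality holds for every $t$.

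\emph{Case $c\ge2$.} For $t\ge c-1$ the ordinary regularity is $(n-2)t+c-1\ge(n-2)t+1\ge(n-2)t+\varepsilon(G)$. For $2\le t\le c-2$ it is $(n-1)t=(n-2)t+t\ge(n-2)t+2>(n-2)t+\varepsilon(G)$. Together with $t=1$, this proves the inequality for all $t$.

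For the equality statement, equality at every $t\ge2$ requires $c-1=\varepsilon(G)$ for $t\ge c-1$, and rules out any $t$ with $2\le t\le c-2$.
\begin{itemize}
\item If $c\ge4$, then $t=2$ lies in the range $2\le t\le c-2$, so the inequality is strict there.
\item If $c=3$, take $t=2\ge c-1$: we get $(n-2)2+2>(n-2)2+\varepsilon(G)$, so again the inequality is strict.
\item If $c=2$, the ordinary regularity is $(n-2)t+1$ for all $t\ge1$. Equality for all $t\ge2$ then holds exactly when $\varepsilon(G)=1$, i.e.\ when $G$ has a $K_2$-component.
\end{itemize}
Combining this with the case $c=1$ gives the stated characterization.

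There is no real obstacle. The one point to check is that \eqref{eq:ordinary-powers} holds under the same hypotheses as Theorem~\ref{thm:main-symbolic-regularity}, in particular that it remains valid when $G$ has isolated vertices. If it is only stated for graphs without isolated vertices, it can be transported to the general case via Lemma~\ref{lem:isolated-reduction}. The reason is that $I_c(G)^t=x_W^tI_c(H)^t$, which shifts regularity by $t|W|$, and passing from $H$ to $G$ leaves $c$ unchanged.
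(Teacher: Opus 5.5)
Your proof is correct and follows the same route as the paper: compare Theorem~\ref{thm:main-symbolic-regularity} with the Ficarra--Moradi formula \eqref{eq:ordinary-powers}, case by case in $c(G)$. Your explicit checks for $c\ge4$, $c=3$ and $c=2$ (strict inequality at $t=2$ in the first two) spell out the paper's one-line claim that the inequality is strict for some $t\ge2$. Your transport of \eqref{eq:ordinary-powers} to graphs with isolated vertices via Lemma~\ref{lem:isolated-reduction} covers a hypothesis check the paper leaves implicit.
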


\begin{proof}
The inequality follows from Theorem~\ref{thm:main-symbolic-regularity} and Equation \eqref{eq:ordinary-powers}. If $c(G)=1$, then both sides equal $(n-2)t$. If $c(G)=2$ and $G$ has a $K_2$-component, both sides equal $(n-2)t+1$. In every other case with $c(G)\ge2$, the inequality is strict for at least one $t\ge2$.
\end{proof}

\begin{example}
The distinction in Corollary~\ref{cor:symbolic-ordinary-comparison}
can already be seen when \(c(G)=2\). Let \(G=2K_3\), then \(n=6\) and
\(G\) has no \(K_2\)-component. Hence, for every \(t\ge2\),
\[
\reg(I_c(G)^{(t)})=4t,
\qquad
\reg(I_c(G)^t)=4t+1.
\]
Thus, the inequality in Corollary~\ref{cor:symbolic-ordinary-comparison} is strict. On the other hand, if \(G=K_2\sqcup K_3\), then \(n=5\),
\(c(G)=2\), and \(G\) has a \(K_2\)-component. Therefore,
\[
\reg(I_c(G)^{(t)})
=
\reg(I_c(G)^t)
=
3t+1
\]
for every \(t\ge1\).
\end{example}

\section{Serre's condition and Cohen-Macaulayness of symbolic powers}\label{sec:depth-properties}
In this section, we provide explicit combinatorial descriptions for Serre's condition on \(R/I_c(G)\) and for the Cohen-Macaulayness of \(R/I_c(G)^{(t)}\) for all \(t\geq 1\). First, we have the following lemma.
\begin{lemma}\label{lem:isolated-reduction1}
Let $G$ be a finite simple graph on $[n]$, where $n\geq3$ and $E(G)\neq\emptyset$. Let $W$ be the set of isolated vertices of $G$, set $H=G_{[n]\setminus W} $, and assume that $W\neq\emptyset$.
\begin{enumerate}
\item If $|V(H)|=2$, then $H\cong K_2$, and $R/I_c(G)^{(t)}$ is Cohen-Macaulay for every $t\geq1$.
\item If $|V(H)|\geq3$, then $R/I_c(G)$ does not satisfy Serre's condition $(S_2)$. Moreover, $R/I_c(G)^{(t)}$ does not satisfy $(S_2)$ for any $t\geq3$.
\end{enumerate}
\end{lemma}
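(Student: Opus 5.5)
For part (1), the case $|V(H)|=2$ is explicit. Since $E(G)\neq\emptyset$, $H$ has exactly two vertices joined by an edge, so $H\cong K_2$ and $E(G)=E(H)=\{e\}$ with $[n]\setminus e=W$. Thus $I_c(G)=(x_W)$. Its minimal primes are $(x_j)$ for $j\in W$, so
\[
I_c(G)^{(t)}=\bigcap_{j\in W}(x_j)^t=(x_W^t).
\]
This is a principal ideal generated by a nonzerodivisor of $R$. Hence $R/I_c(G)^{(t)}$ is a hypersurface ring and is Cohen--Macaulay for every $t\ge1$. I argue directly here because Lemma~\ref{lem:isolated-reduction} needs $J$ proper, and $I_c(K_2)$ is the unit ideal.

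For part (2), suppose $|V(H)|\ge3$. Then $J=I_c(H)$ is a nonzero proper square-free monomial ideal on $V(H)$, and $H$ has no isolated vertices. By Lemma~\ref{lem:isolated-reduction} and Lemma~\ref{lm:primary-decomposition},
\[
\Min(I_c(G))=\{(x_j):j\in W\}\cup\{P_e:e\in E(H^c)\}\cup\{P_T:T\in\mathcal T(H)\}.
\]
The second and third sets are not both empty, since $J$ is proper. So $\Min(I_c(G))$ contains primes of height $1$ (from $W\neq\emptyset$) and primes of height at least $2$. Equivalently, $\Delta(I_c(G))$ has facets $[n]\setminus\{j\}$ of size $n-1$ and facets $[n]\setminus e$ or $[n]\setminus T$ of size at most $n-2$. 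Hence $\Delta(I_c(G))$ is not pure, and Lemma~\ref{lem:S2-links} shows that $R/I_c(G)$ fails $(S_2)$.

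For the symbolic powers, the associated primes of $I_c(G)^{(t)}=\bigcap_{P\in\Min(I_c(G))}P^t$ are exactly the primes in $\Min(I_c(G))$. Therefore $R/I_c(G)^{(t)}$ has minimal primes of heights $1$ and $\ge2$ for every $t$, so it is not equidimensional. I will then use the standard fact that a ring which is a homomorphic image of a Cohen--Macaulay local ring and satisfies $(S_2)$ is equidimensional. This follows from Hartshorne's connectedness in codimension one together with $(S_2)$; see also the standard results of Aoyama and Goto.

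To apply this, localize at $\m$: $R_\m$ is regular, and the minimal primes of $I_c(G)^{(t)}$ are all contained in $\m$ with the same heights. So $(R/I_c(G)^{(t)})_\m$ satisfies $(S_2)$ but is not equidimensional, a contradiction. This argument actually works for all $t\ge1$, so it covers $t\ge3$ in particular.

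The main obstacle is justifying the passage from $(S_2)$ to equidimensionality for non-reduced quotients. If a self-contained argument is preferred, I will localize at a specific prime instead. Take $\mathfrak p=(x_j)+P_e$ with $j\in W$ and $P_e\in\Min(J)$ of height $2$; if only triangles occur, use $P_T$ of height $3$. Then $(R/I^{(t)})_{\mathfrak p}$ has dimension $\ge2$. Its minimal primes $(x_j)$ and $P_e$ meet only in the maximal ideal after localization, and removing that point disconnects the punctured spectrum. By Hartshorne's connectedness this forces depth at most $1$, violating $(S_2)$.
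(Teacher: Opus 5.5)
Your part (1) and the $t=1$ half of part (2) match the paper's argument: $I_c(G)^{(t)}=(x_W^t)$ in (1), and in (2) $\Delta(I_c(G))$ is not pure, so Lemma~\ref{lem:S2-links} applies.

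For the symbolic powers your route is correct but genuinely different. The paper uses $\dim\Delta(I_c(G))=n-2\ge2$ to invoke \cite[Theorem~3.6]{Trung-Terai]}. That theorem says $(S_2)$ for some $t\ge3$ would make $\Delta(I_c(G))$ a matroid complex. Matroid complexes are pure, so this is a contradiction.

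You argue in one of two ways. First, an $(S_2)$ quotient of a Cohen--Macaulay local ring is equidimensional. Second, more concretely, you localize at $\mathfrak p=(x_j)+P_e$ (or $(x_j)+P_T$) and apply Hartshorne's lemma that depth $\ge2$ forces a connected punctured spectrum.

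What your approach buys:
\begin{enumerate}
\item It needs no matroid theory.
\item It needs no dimension hypothesis.
\item It proves more: $R/I_c(G)^{(t)}$ fails $(S_2)$ for every $t\ge1$, including $t=2$, which the paper's lemma leaves unaddressed.
\end{enumerate}
The paper's route is a one-line citation and fits how \cite{Trung-Terai} is used again in Theorem~\ref{thm:symbolic-CM}.

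I would make the localized argument your main proof, since it replaces the loosely cited general fact with a self-contained check. When you write it, state explicitly that $(x_j)$ and $P_e$ (resp.\ $P_T$) are the only minimal primes of $I_c(G)^{(t)}$ contained in $\mathfrak p$. This holds for two reasons. First, $(x_{j'})\not\subseteq\mathfrak p$ for $j'\neq j$. Second, any nonedge or triangle of $H$ inside $\{j\}\cup e$ (resp.\ $\{j\}\cup T$) lies in $V(H)$, so it equals $e$ (resp.\ $T$). This is exactly what makes the punctured spectrum of the localization split into two nonempty closed pieces.
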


\begin{proof}
$(1)$ Suppose that $|V(H)|=2$. Since $H$ has no isolated vertices, we have $H\cong K_2$, and hence
\[
G\cong K_2\sqcup |W|K_1.
\]
In this case, $I_c(G)=(x_W)$ and $I_c(G)^{(t)}=(x_W^t)$ for every $t\geq1$. Therefore, $R/I_c(G)^{(t)}$ is Cohen-Macaulay.\\
\noindent
$(2)$ Assume that $|V(H)|\geq3$ and set $J=I_c(H)$. Then by Lemma~\ref{lem:isolated-reduction}, we have
\[
I_c(G)=x_WJ
\quad\text{and}\quad
\operatorname{Min}(I_c(G))
=
\{(x_j):j\in W\}\cup\operatorname{Min}(J).
\]
The prime ideal $(x_j)$ has height one, whereas every prime ideal in $\operatorname{Min}(J)$ has height two or three by Lemma \ref{lm:primary-decomposition}. Hence, $\Delta(I_c(G))$ is not pure. Therefore, by Lemma~ \ref{lem:S2-links}, we obtain that $R/I_c(G)$ does not satisfy $(S_2)$.

Moreover, for each $j \in W$, the minimal prime $(x_j)$ corresponds to a facet of $\Delta(I_c(G))$ of cadinality $n - 1$. Thus, $\dim(\Delta(I_c(G))) = n - 2 \geq 2$. If $R/I_c(G)^{(t)}$ satisfied $(S_2)$ for some $t\geq3$, then by \cite[Theorem~3.6]{Trung-Terai} we get that $\Delta(I_c(G))$ is a matroid complex. This is impossible because every matroid complex is pure.
\end{proof}

\subsection{Serre's condition for complementary edge ideals}\label{sec:serre-s2}

The following theorem answers \cite[Question~2.9]{fm25}.

\begin{theorem}\label{thm:S2}
Let $G$ be a finite simple graph on $[n]$, where $n\geq3$ and $E(G)\neq\emptyset$. Then $R/I_c(G)$ satisfies Serre's condition $(S_2)$ if and only if one of the following conditions holds:
\begin{enumerate}
\item $G$ has no isolated vertices and either $G$ is complete or $\operatorname{girth}(G)\geq5$;
\item $G\cong K_2\sqcup sK_1$ for some $s\geq1$.
\end{enumerate}
\end{theorem}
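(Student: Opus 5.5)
The first step is to dispose of isolated vertices using Lemma~\ref{lem:isolated-reduction1}. If $G$ has isolated vertices and the non-isolated part $H$ has two vertices, then $G\cong K_2\sqcup sK_1$ and $R/I_c(G)$ is Cohen--Macaulay, hence $(S_2)$. If $|V(H)|\ge3$, then $(S_2)$ fails. So from now on assume $G$ has no isolated vertices, and show that $(S_2)$ holds if and only if $G$ is complete or $\operatorname{girth}(G)\ge5$.

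Let $\Delta=\Delta(I_c(G))$. Its facets are $[n]\setminus e$ for $e\in E(G^c)$ and $[n]\setminus T$ for $T\in\mathcal T(G)$, and we apply Lemma~\ref{lem:S2-links}.

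\emph{Purity.} If $G$ is complete, every facet is $[n]\setminus T$, so $\Delta$ is pure of dimension $n-4$ (if $n=3$, $\Delta=\{\emptyset\}$ and $R/I$ is trivially CM). If $G$ is not complete, there is a nonedge. In that case $\Delta$ is pure exactly when $\mathcal T(G)=\emptyset$, i.e.\ when $G$ is triangle-free. So from now on $G$ is noncomplete and triangle-free, with facets $[n]\setminus e$, $e\in E(G^c)$.

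\emph{Links.} For $F\in\Delta$, write $V=[n]\setminus F$. The link $\lk_\Delta(F)$ has facets $V\setminus e$ for the nonedges $e$ of $G$ inside $V$. Its dimension is $|V|-3$, so we need $|V|\ge4$. Two facets $V\setminus e$ and $V\setminus e'$ share a vertex as soon as $|e\cup e'|<|V|$. Connectivity can therefore fail only when $|V|=4$: there the link is a union of edges $V\setminus e$, i.e.\ it is the graph $(G_V^c)^c$ restricted to those pairs. Concretely, the $1$-dimensional link on $V$ with $|V|=4$ has edge set $\{V\setminus e : e\in E(G^c),\ e\subseteq V\}$, which is isomorphic to the complement-matching image of $E(G_V^c)$ under $e\mapsto V\setminus e$.

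For $|V|\ge5$, any two facets $V\setminus e$ and $V\setminus e'$ intersect, so the link is connected. Hence $(S_2)$ reduces to the following: for every $4$-set $V$ such that $G_V^c$ has at least two edges, the graph $\{V\setminus e\}$ is connected. Vertices not covered by these edges do not matter, since the link's vertex set consists only of vertices lying in faces.

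\emph{Case analysis on 4-vertex induced subgraphs.} The main step is to check which triangle-free graphs $G_V$ on four vertices give a disconnected link. The map $e\mapsto V\setminus e$ sends a set of pairs $S$ to the set of complementary pairs, and that image graph is disconnected precisely when it contains two disjoint edges with nothing joining them, or an isolated covered part.

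Write $N=E(G_V^c)$ for the set of nonedges inside $V$; its image is the set of complementary pairs of nonedges. The image of $N$ equals the complement of the image of $E(G_V)$, and on four vertices the complementation map $e\mapsto V\setminus e$ is an automorphism of the edge structure pairing perfect matchings. So the image of $N$ is isomorphic to $N$ with each edge swapped for its opposite.

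Rather than hunt for a slick argument, I will simply enumerate the triangle-free graphs on four vertices: empty, $K_2$, $2K_2$, $P_3$, $P_3\sqcup K_1$, $K_{1,3}$, $P_4$, $C_4$, $K_2\sqcup 2K_1$. For each I compute whether the image of the nonedge set is connected on the vertices it covers. I expect the bad cases to be exactly $G_V\cong C_4$ and $G_V\cong 2K_2$.
\begin{itemize}
\item For $C_4$, the nonedges are the two diagonals, whose images are the diagonals again: two disjoint edges, so the link is disconnected.
\item For $2K_2$, the nonedges form a $C_4$ and the image is the complementary $C_4$, which is connected.
\end{itemize}
So I must recheck carefully; the expected answer is that $C_4$ is the only obstruction.

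In a triangle-free graph, girth $\ge5$ means no $C_4$, and a $C_4$ in a triangle-free graph is automatically induced. This gives both directions: if $\operatorname{girth}(G)=4$, take $V$ to be an induced $C_4$ to obtain a disconnected $1$-dimensional link; otherwise every link is connected.

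The main obstacle is this four-vertex bookkeeping, including the edge cases where the link has isolated covered vertices.
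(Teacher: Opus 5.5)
\textbf{Gap in the four-vertex step.} Your overall route is the paper's. You remove isolated vertices with Lemma~\ref{lem:isolated-reduction1}, use purity to force $G$ triangle-free when $G$ is not complete, and observe that a link of dimension at least one can be disconnected only when $|V|=4$. The problem is that the decisive four-vertex step is never proved. You announce an enumeration but carry out only the cases $C_4$ and $2K_2$. You also first expect $2K_2$ to be an obstruction, which it is not: its nonedges form a $4$-cycle that $e\mapsto V\setminus e$ maps to itself. You then stop at ``I expect'' and ``I must recheck carefully.'' So the implication $\operatorname{girth}(G)\ge5\Rightarrow(S_2)$ rests on an unverified claim.

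\textbf{The missing argument.} Every facet of $\lk_\Delta(F)$ has exactly two elements, so the link is a graph with no isolated vertices on at most four vertices. If it is disconnected, it must consist of exactly two disjoint edges $\{a,b\}$ and $\{c,d\}$, and none of $\{a,c\},\{a,d\},\{b,c\},\{b,d\}$ is a facet. Translate through $e\mapsto V\setminus e$. Then $\{c,d\}$ and $\{a,b\}$ are nonedges of $G_V$, while $\{b,d\},\{b,c\},\{a,d\},\{a,c\}$ are edges of $G_V$. Hence $G_V\cong C_4$, contradicting $\operatorname{girth}(G)\ge5$. This also settles your worry about ``isolated covered vertices'': purity of the link rules them out. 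Note as well that $(G_V^c)^c=G_V$ is not the link.

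\textbf{Complete case.} For complete $G$ with $n\ge4$ you check only purity, not the link condition. You can cite that $R/I_c(K_n)$ is Cohen--Macaulay, as the paper does via \cite[Theorem~2.8]{fm25}. Alternatively, observe that $\Delta(I_c(K_n))$ is the $(n-4)$-skeleton of the simplex on $[n]$. Then every link is a skeleton of a simplex, hence connected whenever its dimension is at least one.

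With these two points filled in, your argument coincides with the paper's proof.
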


\begin{proof}
If $G$ has isolated vertices, then the assertion follows from Lemma~\ref{lem:isolated-reduction1}. Hence, it remains to consider the case in which $G$ has no isolated vertices.

Let \(\Delta=\Delta(I_c(G))\). If \(G\) is complete, then \(R/I_c(G)\) is Cohen-Macaulay by \cite[Theorem~2.8]{fm25}, and hence it satisfies \((S_2)\). Thus, it may be assumed for the rest of the proof that \(G\) is not complete.

Suppose first that \(R/I_c(G)\) satisfies \((S_2)\). Then by Lemma~\ref{lem:S2-links}, \(\Delta\) is pure, or equivalently, \(I_c(G)\) is unmixed. Since \(G\) is not complete, \(E(G^c)\ne\emptyset\), and hence, by Lemma~\ref{lm:primary-decomposition}, \(I_c(G)\) has a minimal prime of height \(2\). Also, by Lemma \ref{lm:primary-decomposition}, there is a minimal prime of height \(3\) for every triangle of \(G\). Thus, \(G\) is triangle-free.

The graph \(G\) cannot contain a \(4\)-cycle. Otherwise, let \(W\subseteq[n]\) be the vertex set of a \(4\)-cycle and \(F=[n]\setminus W\). Since \(G\) is triangle-free, the induced graph \(G_W\) is a chordless \(4\)-cycle. Its two diagonals, say \(e_1\) and \(e_2\), are therefore the only edges of \(G_W^c\). 
A facet \([n]\setminus e\) of \(\Delta\) contains \(F\) if and only if
\(e\subseteq W\). Hence the only facets of \(\Delta\) containing \(F\)
are \([n]\setminus e_1\) and \([n]\setminus e_2\). Therefore,
\[
\mathcal F\bigl(\lk_\Delta(F)\bigr)
=
\{W\setminus e_1,W\setminus e_2\}
=
\{e_2,e_1\}.
\]
Consequently, \(\lk_\Delta(F)\) is one-dimensional and disconnected, contradicting Lemma~\ref{lem:S2-links}. Hence, \(G\) contains neither a \(3\)-cycle nor a \(4\)-cycle, and therefore \(\operatorname{girth}(G)\ge5\).

Conversely, suppose that \(\operatorname{girth}(G)\ge5\). In particular, \(G\) is triangle-free, so it follows from Lemma~\ref{lm:primary-decomposition} that
\[
I_c(G)=\bigcap_{e\in E(G^c)}P_e.
\]
Thus \(\Delta\) is pure and
\[
\mathcal F(\Delta)=\{[n]\setminus e:e\in E(G^c)\}.
\]
It remains to verify the link condition in Lemma~\ref{lem:S2-links}. Let \(F\in\Delta\), set \(W=[n]\setminus F\), and assume that \(\dim(\lk_\Delta(F))\ge1\). Since \(F\in\Delta\), the graph \(G_W^c\) has an edge, and
\begin{equation}\label{eq:facets-S2-link}
\mathcal F(\lk_\Delta(F))=\{W\setminus e:e\in E(G_W^c)\}.
\end{equation}
All these facets have cardinality \(|W|-2\); hence \(\dim\lk_\Delta(F)=|W|-3\), and so \(|W|\ge4\).

If \(|W|\ge5\), then any two facets in Equation \eqref{eq:facets-S2-link} intersect, because
\[
(W\setminus e)\cap(W\setminus e')=W\setminus(e\cup e')\ne\emptyset.
\]
Therefore, $\lk_\Delta(F)$  is connected. Suppose that \(|W|=4\). In this case the facets in Equation \eqref{eq:facets-S2-link} are edges. If $\lk_\Delta(F)$ were disconnected, it would contain two disjoint facets, say \(\{a,b\}\) and \(\{c,d\}\), and no facet joining these two edges. By Equation \eqref{eq:facets-S2-link}, \(\{a,b\}\) and \(\{c,d\}\) are nonedges of \(G_W\), whereas every pair with one vertex in \(\{a,b\}\) and the other in \(\{c,d\}\) is an edge of \(G_W\). Hence \(G_W\cong K_{2,2}\), a \(4\)-cycle, contradicting \(\operatorname{girth}(G)\ge5\). Thus, every link of dimension at least one is connected. By Lemma~\ref{lem:S2-links}, \(R/I_c(G)\) satisfies \((S_2)\).
\end{proof}

\subsection{Cohen-Macaulayness of symbolic powers}\label{sec:symbolic-CM}

The preceding subsection concerns Serre's condition for \(R/I_c(G)\).
The symbolic powers $I_c(G)^{(t)}$ with $t \geq 3$ exhibit a more rigid structure. By \cite[Theorem~3.6]{Trung-Terai}, if
\(R/I_c(G)^{(t)}\) satisfies Serre's condition \((S_2)\) for some
\(t\geq3\), then the Stanley-Reisner complex
\(\Delta(I_c(G))\) is a matroid complex. This reduces the problem to
determining the graphs \(G\) for which \(\Delta(I_c(G))\) is a
matroid complex.

%Throughout this subsection, let \(G\) be a finite simple graph on \([n]\), where \(n\geq3\), and assume that \(G\) has no isolated vertices. 

We first consider two lemmas needed to determine when
\(\Delta(I_c(G))\) is a matroid complex.

\begin{lemma}\label{lem:complete-matroid}
If \(G=K_n\), then $\Delta(I_c(G))=U_{n-3,n}.$ In particular, \(\Delta(I_c(G))\) is a matroid complex.
\end{lemma}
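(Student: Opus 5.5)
We compute the facets of \(\Delta=\Delta(I_c(K_n))\) directly from Lemma~\ref{lm:primary-decomposition}, and then compare with \(U_{n-3,n}\) facet by facet. Since \(G=K_n\) has no isolated vertices (as \(n\ge3\)) and \(G^c\) has no edges, the decomposition in Lemma~\ref{lm:primary-decomposition} reduces to
\[
I_c(K_n)=\bigcap_{T\in\mathcal T(K_n)}P_T=\bigcap_{T\subseteq[n],\ |T|=3}P_T,
\]
and this decomposition is irredundant. Hence, by the description of facets in Section~\ref{sec:symbolic-regularity},
\[
\mathcal F(\Delta)=\{[n]\setminus T:\ T\subseteq[n],\ |T|=3\}.
\]
This is exactly the set of all \((n-3)\)-subsets of \([n]\).

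Next we identify \(\Delta\) with the uniform matroid. Both are simplicial complexes, so each is determined by its facets. The facets of \(U_{n-3,n}\), namely the maximal subsets of cardinality at most \(n-3\), are precisely all \((n-3)\)-subsets of \([n]\). Therefore \(\Delta=U_{n-3,n}\).

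An alternative check avoids Lemma~\ref{lm:primary-decomposition}. A monomial \(x_F\) lies in \(I_c(K_n)\) if and only if \(F\supseteq[n]\setminus e\) for some pair \(e\), that is, if and only if \(|F|\ge n-2\). Thus the faces of \(\Delta\) are exactly the sets with \(|F|\le n-3\).

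Finally, we verify that \(U_{r,n}\) satisfies the exchange axiom. Let \(F,G\in U_{r,n}\) with \(|F|<|G|\). Then \(G\setminus F\neq\emptyset\); choose any \(j\in G\setminus F\). Since \(|F\cup\{j\}|=|F|+1\le|G|\le r\), we get \(F\cup\{j\}\in U_{r,n}\). Hence \(\Delta(I_c(K_n))=U_{n-3,n}\) is a matroid complex.

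The only point requiring care is the degenerate case \(n=3\). There \(I_c(K_3)=\m\), and \(\Delta=U_{0,3}=\{\emptyset\}\), which is still a matroid complex.
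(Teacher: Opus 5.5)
Your proof is correct. Your ``alternative check'' is exactly the paper's argument: the minimal nonfaces of $\Delta(I_c(K_n))$ are the $(n-2)$-subsets, so $\Delta=\{F:|F|\le n-3\}$. Your main route instead reads off the facets $[n]\setminus T$ from Lemma~\ref{lm:primary-decomposition}, which is the dual and equally valid view. Your explicit check of the exchange axiom and of the case $n=3$ fills in what the paper leaves implicit in ``In particular''.
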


\begin{proof}
Since $I_c(K_n)=\left(x_F:F\subseteq[n],\ |F|=n-2\right),$
the minimal nonfaces of \(\Delta(I_c(K_n))\) are precisely the \((n-2)\)-subsets of \([n]\). Therefore,
\[
\Delta(I_c(K_n))=\{F\subseteq[n]:|F|\le n-3\}=U_{n-3,n}.
\]
\end{proof}

\begin{lemma}\label{lem:forest-matroid}
Let \(G\) be a forest on \([n]\), where \(n\ge3\), and suppose that
\(G\) has no isolated vertices. Then \(\Delta(I_c(G))\) is a matroid complex if and only if either
\[
G\cong K_{1,r}\quad\text{for some }r\ge2,
\]
or
\[
G\cong qK_2\quad\text{for some }q\ge2.
\]
\end{lemma}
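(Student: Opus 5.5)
Since \(G\) is a forest, it is triangle-free, so by Lemma~\ref{lm:primary-decomposition} the facets of \(\Delta=\Delta(I_c(G))\) are exactly the sets \([n]\setminus e\) with \(e\in E(G^c)\). Thus \(\Delta\) is pure of rank \(n-2\), and its bases are the complements of the non-edges of \(G\). A pure complex is a matroid complex if and only if its complementary family is the set of bases of a matroid, so the plan is to work with the dual \(\Delta^\perp\). Its bases are exactly the non-edges of \(G\), so \(\Delta\) is a matroid if and only if \(E(G^c)\) is the set of bases of a rank-\(2\) matroid on \([n]\).

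Next I would use the standard description of rank-\(2\) matroids. After deleting the loops \(L\), the ground set splits into parallel classes \(C_1,\dots,C_s\) with \(s\ge2\). A pair \(\{i,j\}\) is a basis exactly when \(i,j\notin L\) and \(i,j\) lie in different classes. Hence the non-edges form the bases of a rank-\(2\) matroid if and only if \(G^c\) is a complete multipartite graph \(K_{|C_1|,\dots,|C_s|}\) together with the isolated vertices \(L\). Equivalently, \(G\) is the join of the complete graph on \(L\) with the disjoint union of the cliques on \(C_1,\dots,C_s\).

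Finally I would impose the forest condition.
\begin{itemize}
\item If \(|L|\ge2\), two loops \(a,b\) together with any other vertex form a triangle; this uses \(n\ge3\). So \(|L|\le1\).
\item If \(|L|=1\), every class must have size one. Otherwise two vertices of one class together with the loop form a triangle. This gives \(G\cong K_{1,n-1}\), a star with \(r=n-1\ge2\).
\item If \(L=\emptyset\), every class is a clique, which is acyclic only when its size is at most \(2\). No class can have size \(1\), since that vertex would be isolated. So every class has size \(2\), giving \(G\cong qK_2\) with \(q=n/2\). Here \(q\ge2\) holds because \(s\ge2\), which is needed for rank \(2\).
\end{itemize}
Conversely, a star \(K_{1,r}\) is loop \(=\) centre with singleton leaf classes, and \(qK_2\) has the \(q\) edges as its classes. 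In both cases the non-edges are precisely the pairs from different classes avoiding the loop, so they form the bases of a rank-\(2\) matroid.

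The main obstacle is justifying the rank-\(2\) characterization and the duality step. I would first note that the complements of the bases of a matroid are the bases of the dual matroid, so \(\Delta\) is a matroid if and only if the family of non-edges is a basis family. Then I would verify directly, using the basis exchange axiom, that parallelism on the non-loops is an equivalence relation and that the bases are exactly the cross pairs. For instance, if \(\{i,j\}\) and \(\{k,l\}\) are both bases, exchange forces one of \(\{i,k\}\) or \(\{i,l\}\) to be a basis, which gives transitivity of non-parallelism complements.
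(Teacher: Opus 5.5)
Your proposal is correct and follows essentially the same route as the paper: pass to the dual matroid, whose bases are \(E(G^c)\), and use the parallel-class description of rank-two matroids to make \(G^c\) complete multipartite plus isolated loops. The forest condition then forces \(K_{1,r}\) (one loop) or \(qK_2\) (no loops), and the converse uses the uniform and partition rank-two matroids. Your explicit case split on \(|L|\) and the join description of \(G\) are only a slightly more detailed rendering of the paper's argument.
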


\begin{proof}
Let \(\Delta=\Delta(I_c(G))\). Since \(G\) is triangle-free, it follows from Lemma~\ref{lm:primary-decomposition} that
\[
I_c(G)=\bigcap_{e\in E(G^c)}P_e
\qquad\text{and}\qquad
\mathcal F(\Delta)=\{[n]\setminus e:e\in E(G^c)\}.
\]
Suppose that \(\Delta\) is a matroid complex. Since the facets of a matroid complex are its bases, then $\mathcal B(\Delta) =\{[n]\setminus e:e\in E(G^c)\},$ and the dual matroid,
\[
\mathcal B(\Delta^\perp)=E(G^c).
\]
Thus, \(E(G^c)\) is the set of bases of a rank-two matroid. The nonloop vertices of such a matroid are partitioned into parallel classes, and two vertices form a basis precisely when they lie in different classes (see~\cite[Section 1.1 ]{Oxley}). Hence, two nonloop vertices are adjacent in \(G^c\) precisely when they belong to different parallel classes. It follows that \(G^c\) is a complete multipartite graph on its nonisolated vertices, together with possible isolated vertices corresponding to loops.

If \(G^c\) has an isolated vertex \(v\), then \(v\) is adjacent in \(G\) to every other vertex. Since \(G\) is a forest, no two vertices in \([n]\setminus\{v\}\) can be adjacent; otherwise, together with \(v\), they would form a triangle. Hence, \(G\cong K_{1,n-1}\), where \(n-1\geq2\). If \(G^c\) has no isolated vertices, then \(G\), being the complement of a complete multipartite graph, is a disjoint union of complete graphs.
Since \(G\) is a forest, each of these complete graphs has at most two
vertices. As \(G\) has no isolated vertices, each has exactly two
vertices. Thus, \(G\cong qK_2\).

Conversely, suppose that \(G\cong K_{1,r}\). Then \(G^c\cong K_r\sqcup\{v\}\), and its edges are the bases of the rank-two uniform matroid on the \(r\) nonloop vertices, with \(v\) as a loop. If \(G\cong qK_2\), then \(G^c\) is the complete \(q\)-partite graph whose parts have cardinality two, so its edges are the bases of a rank-two partition matroid. Thus, in either case \(\Delta^\perp\) is a matroid complex, and so is \(\Delta\).
\end{proof}

\begin{theorem}\label{thm:symbolic-CM}
Let $G$ be a finite simple graph on $[n]$, where $n\geq 3$ and $E(G)\neq\emptyset$. The following conditions are equivalent:
\begin{enumerate}
\item $R/I_c(G)^{(t)}$ is Cohen-Macaulay for every $t\geq 1$;
\item $R/I_c(G)^{(t)}$ is Cohen-Macaulay for some $t\geq 3$;
\item $R/I_c(G)^{(t)}$ satisfies Serre's condition $(S_2)$ for some $t\geq 3$;
\item $G\cong K_n$, $G\cong K_{1,r}$ for some $r\geq 2$, $G\cong qK_2$ for some $q\geq 2$, or $G\cong K_2\sqcup sK_1$ for some $s\geq 1$.
\end{enumerate}
\end{theorem}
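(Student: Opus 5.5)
The plan is to prove the cycle of implications (1)$\Rightarrow$(2)$\Rightarrow$(3)$\Rightarrow$(4)$\Rightarrow$(1). The implications (1)$\Rightarrow$(2) and (2)$\Rightarrow$(3) are formal: specialize to $t=3$, and use that Cohen--Macaulay rings satisfy $(S_2)$.

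The substance lies in (3)$\Rightarrow$(4). First dispose of isolated vertices with Lemma~\ref{lem:isolated-reduction1}. If $G$ has isolated vertices and $|V(H)|\ge3$, part~(2) of that lemma says $R/I_c(G)^{(t)}$ fails $(S_2)$ for all $t\ge3$, which is excluded. Otherwise $G\cong K_2\sqcup sK_1$.

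So assume $G$ has no isolated vertices. By \cite[Theorem~3.6]{Trung-Terai}, condition (3) forces $\Delta=\Delta(I_c(G))$ to be a matroid complex. In particular $\Delta$ is pure. If $G$ is complete we are done. Otherwise $E(G^c)\neq\emptyset$ gives height-two primes, so purity forces $G$ to be triangle-free, exactly as in the proof of Theorem~\ref{thm:S2}.

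Next I show that $G$ is a forest. Since $\Delta$ is now pure with facets $[n]\setminus e$ for $e\in E(G^c)$, the dual matroid argument of Lemma~\ref{lem:forest-matroid} applies verbatim; that argument never used acyclicity in its first step. It shows that $E(G^c)$ is the basis set of a rank-two matroid. Hence $G^c$ is a complete multipartite graph on its nonloop vertices, possibly together with isolated loop vertices.

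From here there are two cases.
\begin{itemize}
\item If $G^c$ has an isolated vertex $v$, then $v$ is adjacent in $G$ to every other vertex. Triangle-freeness then forbids any further edges, so $G\cong K_{1,n-1}$.
\item If $G^c$ has no isolated vertex, then $G$ is a disjoint union of cliques. Triangle-freeness and the absence of isolated vertices force every clique to be $K_2$. Since $n\ge3$, we get $G\cong qK_2$ with $q\ge2$.
\end{itemize}
This yields (4). The one point needing care is checking that the rank-two matroid classification in Lemma~\ref{lem:forest-matroid} relies only on triangle-freeness, not on $G$ being a forest. I expect it does, since the forest hypothesis was used only to exclude triangles.

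For (4)$\Rightarrow$(1), the case $K_2\sqcup sK_1$ is Lemma~\ref{lem:isolated-reduction1}(1). In the remaining cases, Lemmas~\ref{lem:complete-matroid} and \ref{lem:forest-matroid} show that $\Delta(I_c(G))$ is a matroid complex. Then I invoke the Minh--Trung/Varbaro characterization that $R/I_\Delta^{(t)}$ is Cohen--Macaulay for every $t\ge1$ if and only if $\Delta$ is a matroid; this is presumably also contained in \cite{Trung-Terai}. The main obstacle is making sure this external theorem is quoted in exactly the form needed, with "all $t$" and over an arbitrary field. Everything else reduces to results already established in the paper.
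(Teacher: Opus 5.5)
Your proof is correct, and its skeleton matches the paper's: the isolated-vertex reduction via Lemma~\ref{lem:isolated-reduction1}, \cite[Theorem~3.6]{Trung-Terai} to get a matroid complex, and the matroid characterization of Cohen--Macaulay symbolic powers for (4)$\Rightarrow$(1). The paper cites that characterization as \cite[Theorem~3.5]{MT11}, so your worry about quoting it correctly is resolved.

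The genuine difference is how you get from ``$\Delta(I_c(G))$ is a matroid'' to the list in (4).
\begin{itemize}
\item The paper goes through Cohen--Macaulayness. A matroid complex is Cohen--Macaulay \cite[Theorem~3.5]{MT11}, so $R/I_c(G)$ is Cohen--Macaulay. The Ficarra--Moradi classification \cite[Theorem~2.8]{fm25} then forces $G$ to be complete or a forest, and Lemma~\ref{lem:forest-matroid} applies as stated.
\item You use only purity of the matroid complex to get triangle-freeness. You then observe, correctly, that the rank-two dual-matroid argument in Lemma~\ref{lem:forest-matroid} uses the forest hypothesis only through the absence of triangles. It is used to identify the facets as $[n]\setminus e$, to forbid edges among the neighbours of a universal vertex, and to cap each clique of $G$ at two vertices.
\end{itemize}
Your route is more self-contained: it avoids the external Cohen--Macaulay classification of $R/I_c(G)$ and in effect proves Lemma~\ref{lem:forest-matroid} for all triangle-free graphs. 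The paper's route is shorter, given its citations. Your sentence ``Next I show that $G$ is a forest'' is a misnomer, since you never prove acyclicity separately; it comes out of the classification, which is fine.

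One small point: the paper checks $\dim\Delta\ge1$ before invoking \cite[Theorem~3.6]{Trung-Terai}. This holds when $n\ge4$ and $G$ is not complete, since a facet $[n]\setminus e$ has dimension $n-3$. The paper treats $n=3$ separately. You apply the theorem before splitting off the complete case and without addressing $n=3$. This is harmless but should be fixed: handle $G=K_n$ first, and for $n=3$ with no isolated vertices note directly that $G\in\{K_3,K_{1,2}\}$.
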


\begin{proof}
If $G$ has isolated vertices, then the result follows from Lemma~\ref{lem:isolated-reduction1}. Thus, we may assume that G has no isolated vertices.

Let \(\Delta=\Delta(I_c(G))\), so that \(I_c(G)=I_\Delta\). The implications (1) \(\Rightarrow\) (2) \(\Rightarrow\) (3) are immediate.

Suppose that (3) holds. If \(n=3\), then the assumption that \(G\) has no isolated vertices implies that \(G\cong K_3\) or \(G\cong K_{1,2}\), so (4) holds. Thus, it may be assumed that \(n\ge4\). If \(G\) is complete, then there is nothing to prove. Suppose that \(G\) is not complete. Then \(E(G^c)\ne\emptyset\), and by Lemma~\ref{lm:primary-decomposition}, \(\Delta\) has a facet of cardinality \(n-2\). Hence, \(\dim\Delta=n-3\ge1.\) By \cite[Theorem~3.6]{Trung-Terai}, condition (3) implies that \(\Delta\) is a matroid complex. By \cite[Theorem~3.5]{MT11}, \(R/I_\Delta\) is Cohen-Macaulay. By \cite[Theorem~2.8]{fm25}, \(G\) is therefore complete or a forest. Since \(G\) is not complete, it is a forest, and, by Lemma~\ref{lem:forest-matroid}, \(G\cong K_{1,r}\) or \(G\cong qK_2\). Thus (4) holds.

Finally, suppose that (4) holds. By Lemmas~\ref{lem:complete-matroid} and \ref{lem:forest-matroid}, \(\Delta\) is a matroid complex. By \cite[Theorem~3.5]{MT11}, \(R/I_\Delta^{(t)}=R/I_c(G)^{(t)}\) is therefore Cohen-Macaulay for every \(t\ge1\). This proves (1).
\end{proof}


\begin{thebibliography}{10}

\bibitem{fm25}
A.~Ficarra and S.~Moradi.
\newblock Complementary edge ideals, 2025.
\newblock arXiv:2508.10870.

\bibitem{FicarraMoradiMuta}
A. Ficarra, S. Moradi, and Y. Muta,
\textit{Symbolic Rees algebras of complementary edge ideals},
arXiv:2609.01165, 2026.


\bibitem{HQSM26}
T.~Hibi, A.~A. Qureshi, and S.~S. Madani.
\newblock Complementary edge ideals.
\newblock {\em Expositiones Mathematicae}, 44(4):125780, 2026.

\bibitem{HT}
T.~T. Hien and T.~N. Trung.
\newblock Regularity of symbolic powers of square-free monomial ideals.
\newblock {\em Arkiv f\"or Matematik}, 61(1):99--121, 2023.

\bibitem{HT2}
L.~T. Hoa and T.~N. Trung.
\newblock Castelnuovo--{Mumford} regularity of symbolic powers of
  two-dimensional square-free monomial ideals.
\newblock {\em Journal of Commutative Algebra}, 8(1):77--88, 2016.

\bibitem{MS}
E.~Miller and B.~Sturmfels.
\newblock {\em Combinatorial Commutative Algebra}.
\newblock Graduate Texts in Mathematics, vol.~227, Springer, New York, 2005.

\bibitem{MT}
N.~C. Minh and N.~V. Trung.
\newblock Cohen--{Macaulayness} of powers of two-dimensional squarefree
  monomial ideals.
\newblock {\em Journal of Algebra}, 322(12):4219--4227, 2009.

\bibitem{MT11}
N.~C. Minh and N.~V. Trung.
\newblock Cohen--{Macaulayness} of monomial ideals and symbolic powers of
{Stanley--Reisner} ideals.
\newblock {\em Advances in Mathematics}, 226(2):1285--1306, 2011.
\newblock Corrected version: arXiv:1003.2152v2.

\bibitem{MontanoNunez21}
Monta{\~n}o, Jonathan and N{\'u}{\~n}ez-Betancourt, Luis.
\newblock Splittings and symbolic powers of square-free monomial ideals.
\newblock {\em International Mathematics Research Notices}, (3):2304--2320, 2021.


\bibitem{Oxley}
J.~Oxley.
\newblock {\em Matroid Theory}.
\newblock Oxford University Press, Oxford, second edition, 2011.

\bibitem{PFTY}
M.R. Pournaki, S.A. Seyed Fakhari, N. Terai and S. Yassemi.
\newblock Survey article: Simplicial complexes satisfying Serre's condition: A survey with some new results.
\newblock {\em Journal of Commutative Algebra}, 6(4), 455-483, 2014.

\bibitem{RS25}
A.~Roy and K.~Saha.
\newblock Equality of ordinary and symbolic powers and the
  {Conforti--Cornu\'ejols} conjecture for $(n-2)$-uniform clutters, 2025.
\newblock arXiv:2510.15864.

\bibitem{T}
Y.~Takayama.
\newblock Combinatorial characterizations of generalized {Cohen-Macaulay}
  monomial ideals.
\newblock {\em Bulletin Math\'ematique de la Soci\'et\'e des Sciences
  Math\'ematiques de Roumanie}, 48(3):327--344, 2005.

\bibitem{T1}
N.~Terai.
\newblock Alexander duality theorem and Stanley--Reisner rings.
\newblock {\em S\={u}rikaisekikenky\={u}sho K\={o}ky\={u}roku}, 1078:174--184, 1999.
\newblock MR1715588.


\bibitem{Trung-Terai}
N.~Terai and N.~V. Trung.
\newblock Cohen--{Macaulayness} of large powers of {Stanley--Reisner} ideals.
\newblock {\em Advances in Mathematics}, 229(2):711--730, 2012.

\end{thebibliography}
	\end{document}